\documentclass[a4]{article}
%%%%%%%%%\UTF{0083}p\UTF{0083}b\UTF{0083}P\UTF{0081}[\UTF{0083}W%%%%%%%
\usepackage{amsmath,amsthm,amssymb,amscd}
\usepackage{ascmac}
\usepackage[arrow,matrix]{xy}
\usepackage{enumerate}
\usepackage[dvipdfmx]{graphicx}

\newtheorem{thm}{Theorem}%[section]
\newtheorem{hyp}{Hypothesis}
\newtheorem{property}{Property}

\newtheorem{dfn}{Definition}[section]

\newtheorem{prop}[dfn]{Proposition}

\newtheorem{lem}[dfn]{Lemma}

\newtheorem{rem}[dfn]{Remark}
\newtheorem{fact}[dfn]{Fact}

\def\dim{\mathop{\mathrm{dim}}\nolimits}

\def\pr{\mathop{\mathrm{Pr}}\nolimits}

\def\real{\mathop{\mathrm{Re}}\nolimits}
\def\imag{\mathop{\mathrm{Im}}\nolimits}
\def\diag{\mathop{\mathrm{diag}}\nolimits}

\def\clos{\mathop{\mathrm{cl}}\nolimits}

\begin{document}

\title{The absolute spectrum revisited from a topological viewpoint}
%\thanks{This work was supported by the Japan Science and Technology Agency, CREST.}} 

\author{Ayuki Sekisaka\\
Meiji Institute for Advanced Study of\\
 Mathematical Sciences, Meiji University,\\
8F High-Rise Wing, Nakano\\
4-21-1 Nakano, Nakano-ku, Tokyo, Japan.\\
E-mail:sekisaka@meiji.ac.jp}
\date{}

\maketitle

{\bf keywords }absolute spectrum; essential spectrum; topological method

{\bf AMS }34, 37

\thispagestyle{plain}
\markboth{AYUKI SEKISAKA}{Absolute spectrum from topological viewpoint}

\begin{abstract}
We consider the topological relation behind the spectral behavior of a linear operator that arises in the stability problem of traveling waves on a large bounded domain. When the domain size tends to infinity, 
the absolute and asymptotic essential spectra appears as accumulation sets of eigenvalues under separated and periodic boundary conditions, respectively. 
We present new proofs of Sandstede and Scheel [Theorems 4 and 5 of \cite{SS2}] in a topological framework.
The eigenfunction induces a curve on the Grassmannian manifold.
To extract topological information from them,
we decompose the Grassmannian into the submanifolds using the Schubert cycles,
and analyze the curves on each submanifolds.
%Finally,
%we give an example of the accumulation behaviors of eigenvalues for general class of reaction-diffusion systems. 
\end{abstract}

%%%%%%%%%%%%%%%%%%%%%%%%%%%%%%%%%%%%%%%%%%%%%%%%%%%%%%%%%%%%%%%%%
\section{Introduction}
We first consider the stability problem of 
traveling wave solutions for reaction-diffusion systems on the bounded interval
$I_\ell :=[-\ell, \ell]$ with the boundary conditions
\begin{eqnarray}
u_t = B u_{xx} + F(u), \; x \in I_\ell, t > 0. 
\end{eqnarray}
Let $B$ be a diagonal positive matrix, $u \in \mathbb{R}^N$ 
and $F: \mathbb{R}^N \rightarrow \mathbb{R}^N$ be a smooth function. 
The eigenvalue problem for a traveling wave solution $u(x,t)=\hat{u}(\xi)$ 
comes from the linearization
\begin{eqnarray}\label{EQ:1}
L_\ell p =  B p_{\xi \xi} + c p_\xi + D_u F(\hat{u}(\xi)) p = \lambda p
\end{eqnarray}
in the moving frame of $\xi = x - ct$ where $c$ is the wave speed. 
We rewrite the eigenvalue problem (\ref{EQ:1}) 
as a first-order system of $Y \in \mathbb{C}^{2N}$ 
\begin{eqnarray}\label{EQ:2}
\displaystyle{
\begin{pmatrix} p \\ q \end{pmatrix}' = 
\begin{pmatrix} 0 & 1 \\ 
B^{-1} (\lambda - \partial_u F(\hat{u}(\xi))) & -B^{-1} c   
\end{pmatrix} 
\begin{pmatrix} p \\ q \end{pmatrix} 
}
& \rightarrow &
Y' = A(\xi; \lambda) Y, 
\end{eqnarray} 
where $\displaystyle{ ' := \frac{\rm d}{{\rm d} \xi} }$ and $A$ is a $2N \times 2N$ matrix.
We reformulate the spectral problem of the family of linear operators 
with respect to the parameter $\lambda$ on the bounded interval $I_\ell$:
\begin{eqnarray}\label{EQ:3}
{\cal T}_\ell (\lambda): H^1_{bd}(I_\ell, \mathbb{C}^{2N})
\rightarrow L^2 (I_\ell, \mathbb{C}^{2N});
Y \mapsto \frac{d Y}{d \xi} - A(\xi; \lambda) Y
\end{eqnarray}
for the Sobolev space $H^1$ of $L^2$ functions where {\em bd} 
indicates two classes of boundary conditions, i.e., 
separated or periodic boundary conditions. We note that $\lim \limits_{\xi \rightarrow \pm \infty}A(\xi; \lambda) = A_{\pm}(\lambda)$ on the real line. 
It is well known that the spectrum of the linear operator ${\cal T}_\ell(\lambda)$ consists solely of eigenvalues on the bounded domain, 
whereas the essential spectrum appears on the unbounded domain. 
What has attracted mathematical interest is the manner in which the spectral behavior changes when the unbounded domain is truncated to a bounded interval depending on the imposed boundary conditions.  

For the stability problem of traveling wave solutions to reaction-diffusion systems, 
Sandstede and Scheel \cite{SS2} introduced the so-called {\em absolute spectrum} as the accumulation sets of eigenvalues when the domain size $2 \ell$ becomes infinite under the separated boundary conditions. 
For the periodic boundary condition,
the absolute spectra are embedded in the boundary of the essential spectra as $\ell \rightarrow \infty$. 
Sandstede and Scheel also investigated the accumulation of eigenvalues arising from the problem in which two unstable front solutions form one stable {\em glued} pulse solution. 
The absolute spectra play a crucial role in determining the instability of the pulse solution when the pulse width becomes infinite.  
%%%%

The spectral behavior of the linear operator has also been discussed in relation to the topological properties of the relevant manifold. 
Alexander, Gardner and Jones \cite{AGJ} shed light on some geometrical aspects of the stability problem of traveling wave solutions on the unbounded domain,
and later examined stability over the bounded domain \cite{GJ}.
Introducing a stability index as the first Chern number of a complex vector bundle on a sphere $S^{2}$ (i.e. homeomorphic to a projective space $\mathbb{CP}^1$), 
they provided the topological structure behind Evans function theory,
in which the eigenvalues of the operator ${\cal T}_\ell(\lambda)$ coincide with zeros of a certain analytic function constructed from the analytic basis spanning the solution manifolds of (\ref{EQ:2}) \cite{Evans4}. 
On the bounded domain,
the eigenvalues accumulate into the essential spectrum as $\ell \to \infty$ if the coefficient matrix satisfies the periodic conditions $A(\xi + 2 \ell;\lambda) = A(\xi;\lambda)$ \cite{RG1}. 
Nii \cite{SN1} gave a topological viewpoint of the accumulation behavior of eigenvalues and also proposed another approach to the stability problem of glued pulse solutions.
However,
the scope of his approach based on the complex line bundle was limited to relatively low- dimensional ordinary differential equations (ODEs),
corresponding to the specific reaction-diffusion systems of the FitzHugh--Nagumo equations.

The present study has its roots in the important work described above.
Most of the remainder of this paper is devoted to further proofs of two theorems given by Sandstede and Scheel [Theorem 4 on p.261 and 5 on p.265 of \cite{SS2}], 
related to the accumulation behavior of the eigenvalues of the linear operator in (\ref{EQ:3}). 
The proofs presented by Sandstede and Scheel used analytical techniques, 
i.e., the construction of a generalized Evans function to introduce exponential weights to the functional space,
and bifurcation analysis using the Lyapunov--Schmidt reduction.
In fact, the entire phase space of (\ref{EQ:2}) is divided 
into two subspaces, the $k$-dimensional unstable subspace $U_-$ of $A_-$ 
and the $(2N-k)$-dimensional stable subspace $U_+$ of $A_+$. 
The absolute spectrum for (\ref{EQ:3}) is then defined as 
\begin{eqnarray}
\Sigma_{abs} := \{ \lambda \in \mathbb{C} | \real \mu_\pm^k(\lambda)
=  \real \mu_\pm^{k+1}(\lambda) \}, 
\end{eqnarray}
where we label the eigenvalues  
of $A_\pm(\lambda)$, $\mu_\pm^j(\lambda)$ for 
$j = 1, \cdots, 2N$, according to their real part as 
$\real \mu_\pm^j(\lambda) \ge \real \mu_\pm^{j+1}(\lambda)$. 
We define the map ${\cal G}_{\ell}: \Sigma_{abs} \to 
G_{k}(\mathbb{C}^{2N})$ on the complex Grassmanian $G_k (\mathbb{C}^{2N})$, 
corresponding to the reformulated system. 
The existence problem for the eigenvalues of ${\cal T}_\ell(\lambda)$ turns out to be equivalent to the existence problem for the one-dimensional curve 
${\cal G}_{\ell}(\Sigma_{abs})$. 
Hence, $\lambda$ is an eigenvalue of $L_\ell$ if and only if the curve ${\cal G}_{\ell}(\Sigma_{abs})$ intersects with $U_{+}$ on $G_{k}(\mathbb{C}^{N})$. 
 
First, we show that the curve ${\cal G}_{\ell}(\Sigma_{abs})$ is diffeomorphic to a curve on the one-dimensional Schubert cycle on $S^2 = \mathbb{CP}^1 \subset G_{N}(\mathbb{C}^{2N})$. 
Next, we show that $U_+$ coincides with the one-codimensional Schubert cycle,
and that $U_+$ intersects ${\cal G}_{\ell}(\lambda)$ transversally for sufficiently large $\ell$.
As $\lambda$ varies,
the topological measurement of the curve ${\cal G}_{\ell}(\lambda)$ corresponding to the winding number changes, and at this point an eigenvalue appears in the original problem (\ref{EQ:1}). 
Finally, we give topological proofs for two theorems of Sandstede and Scheel \cite{SS2}, 
in which the eigenvalues accumulate on the absolute spectrum if and only if the number of intersections between ${\cal G}_{\ell}(\lambda)$ and $U_{+}$ becomes infinite as $\ell \rightarrow \infty$.

Although a large number of studies have been made on stability problems of traveling waves for multi-component reaction-diffusion systems,
little is known about the topological relation behind the spectral behavior of the linear operator and dynamical systems that arise in the stability problem of traveling waves.
We expect that the topological results of the stability problem of traveling wave solutions for the FitzHugh--Nagumo equations (e.g., \cite{SN2}, \cite{SN3}, \cite{SN1}) can be extended to multi-component reaction-diffusion systems.

The remainder of this paper is organized as follows. In Section 2, we describe the structural hypotheses needed to prove the theorems in a topological framework. 
Some settings used in \cite{SS2} are rewritten from a geometrical viewpoint.  
In Section 3,
we address the main subject of this paper, 
proving theorems on the accumulation behavior of eigenvalues of the linear operator ${\cal T}_{\ell}(\lambda)$. 
%Finally, we supply an example of the absolute spectra for the linear scalar diffusion--advection equation,
%and extend this to the problem for general reaction--diffusion systems.

%%%%%%%%%%%%%%%%%%%%%%%%%%%%%%%%%%%%%%%%%%%%%%%%%%%%%%%%%%%%%%%%%%%%%%%%
\section{Structural hypothesis}\label{sec:1}
We consider an abstract system of partial differential equations of the form
\begin{equation*}\label{eq:int2}
u_{t}=N(\partial_{x},u).
\end{equation*}
The behavior of infinitesimal perturbations of the traveling wave $u(\xi)$ with a moving frame $\xi=x-ct$ is determined by eigenvalues of the linear operator:
\begin{equation*}
L_{\ell}=\partial_{u}N(\frac{d}{d \xi},u) \Bigr{|} _{u=u(\xi)} + c \frac{d}{d \xi}.
\end{equation*}
We assume that the eigenvalue problem $L_{\ell}p=\lambda p$ can be rewritten as a system of first-order ODEs.
We obtain the following system on $\mathbb{C}^{N}$:
\begin{equation}\label{eq:1}
Y'=A(\xi;\lambda)Y,
\end{equation}
where $N$ depends on the dimension of $u$ and the higher-order derivatives in $L_{\ell}$.
It is convenient to pose the eigenvalue problem in the general form of \eqref{eq:1} rather than to begin with the partial differential equations,
because several types of equations will lead to matrices $A(\xi;\lambda)$ with somewhat different forms.
For example,
problems involving traveling waves for reaction-diffusion systems,
the generalized KdV equation, and the Boussinesq equations can be rewritten in the form of \eqref{eq:1} \cite{SS2}, \cite{RG2}.

We treat the first-order system \eqref{eq:1} as the family of linear operators with two classes of boundary conditions.
The correct function space for separated boundary conditions is given by
\begin{equation}
H_{sep}^{1}(I_{\ell},\mathbb{C}^{2N}):=H^{1}(I_{\ell},\mathbb{C}^{2N}) \cap \{  Y \mid Y(-\ell) \in U_{-}, \ Y(\ell) \in U_{+} \},
\end{equation}
and we consider the linear operator
\begin{equation*}
{\cal T}_{\ell}(\lambda):H_{sep}^{1}(I_{\ell},\mathbb{C}^{2N}) \to L^{2}(I_{\ell},\mathbb{C}^{2N});\ 
{\cal T}_{\ell}(\lambda)Y=\frac{d}{d x} Y - A(x;\lambda)Y.
\end{equation*}
Note that $U_{\pm}$ can be realized by several boundary conditions that are induced by boundary operators $B_{\pm}$.
However, it cannot be realized by periodic boundary conditions.

Similarly,
the function space for periodic boundary conditions is given by
\begin{equation}
H_{per}^{1}(I_{\ell},\mathbb{C}^{2N}):=H^{1}(I_{\ell},\mathbb{C}^{2N}) \cap \{  Y \mid Y(-\ell) = Y(\ell) \}
\end{equation}
and we consider the linear operator
\begin{equation*}
{\cal T}^{per}_{\ell}(\lambda):H_{per}^{2}(I_{\ell},\mathbb{C}^{2N}) \to L^{2}(I_{\ell},\mathbb{C}^{2N});\ 
{\cal T}^{per}_{\ell}(\lambda)Y=\frac{d}{d x} Y - A(x;\lambda)Y.
\end{equation*}
Throughout this paper,
we assume that $A(x;\lambda) \in M_{N}(\mathbb{C})$ is smooth in $x$ and analytic in $\lambda$.
Under the above derivation,
our main interest is the accumulation of eigenvalues of ${\cal T}_{\ell}$ (resp. ${\cal T}^{per}_{\ell}$) with the parameter $\lambda$ in the B-spectrum (see \cite{LO}).
\begin{dfn}
We say that $\lambda$ is in the spectrum $\Sigma$ of ${\cal T}_{\ell}$ if ${\cal T}_{\ell}(\lambda)$ is not invertible.
$\lambda$ is in the point spectrum $\Sigma_{pt}$ of ${\cal T}_{\ell}$ if ${\cal T}_{\ell}(\lambda)$ is a Fredholm operator with index zero.
The complement $\Sigma_{ess} \setminus \Sigma_{pt} =: \Sigma_{ess}$ is called the essential spectrum.
\end{dfn}

The following fact is well known.
\begin{fact} \cite{SS1}.
The operators ${\cal T}_{\ell}(\lambda)$ (resp. ${\cal T}^{per}_{\ell}$) on the bounded interval $(-\ell,\ell)$ with separated boundary conditions (resp. periodic boundary conditions) are Fredholm with index zero for all $\lambda$.
\end{fact}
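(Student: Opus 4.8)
The plan is to reduce this infinite-dimensional Fredholm statement to finite-dimensional linear algebra by means of the fundamental solution matrix. Fix $\lambda$ and let $\Phi(x;\lambda)$ be the matrix solution of $\Phi'=A(x;\lambda)\Phi$ with $\Phi(-\ell;\lambda)=I$; since $A(\cdot;\lambda)$ is continuous on the compact interval $I_{\ell}$, the matrix $\Phi(\cdot;\lambda)$ exists, is of class $C^{1}$, and is invertible for every $x$. By variation of parameters, for $g\in L^{2}(I_{\ell},\mathbb{C}^{2N})$ every $H^{1}$-solution of ${\cal T}_{\ell}(\lambda)Y=g$ has the form
\begin{equation*}
Y(x)=\Phi(x;\lambda)V+\Phi(x;\lambda)\int_{-\ell}^{x}\Phi(s;\lambda)^{-1}g(s)\,ds,\qquad V\in\mathbb{C}^{2N},
\end{equation*}
and conversely every such $Y$ lies in $H^{1}$ because $Y'=A(x;\lambda)Y+g\in L^{2}$. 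Hence $Y\mapsto\bigl({\cal T}_{\ell}(\lambda)Y,\,Y(-\ell)\bigr)$ is a bounded isomorphism $H^{1}(I_{\ell},\mathbb{C}^{2N})\to L^{2}(I_{\ell},\mathbb{C}^{2N})\times\mathbb{C}^{2N}$. In particular the boundary map $w:L^{2}(I_{\ell},\mathbb{C}^{2N})\to\mathbb{C}^{2N}$ given by $w(g):=\Phi(\ell;\lambda)\int_{-\ell}^{\ell}\Phi(s;\lambda)^{-1}g(s)\,ds$ is bounded and surjective (take $g(s)=\tfrac{1}{2\ell}\Phi(s;\lambda)v$, so that $w(g)=\Phi(\ell;\lambda)v$, and recall $\Phi(\ell;\lambda)$ is invertible).

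Next I would feed in the boundary conditions. In the separated case, $Y\in H^{1}_{sep}(I_{\ell},\mathbb{C}^{2N})$ iff $V=Y(-\ell)\in U_{-}$ and $Y(\ell)=\Phi(\ell;\lambda)V+w(g)\in U_{+}$. Let $\pi_{+}:\mathbb{C}^{2N}\to\mathbb{C}^{2N}/U_{+}$ be the quotient map and put
\begin{equation*}
T:=\pi_{+}\circ\bigl(\Phi(\ell;\lambda)|_{U_{-}}\bigr):U_{-}\longrightarrow\mathbb{C}^{2N}/U_{+}.
\end{equation*}
Since $\dim U_{-}=k$ and $\dim(\mathbb{C}^{2N}/U_{+})=2N-(2N-k)=k$, this is a linear map between spaces of equal dimension $k$. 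Setting $g=0$ gives $\Ker{\cal T}_{\ell}(\lambda)\cong\Ker T$; and ${\cal T}_{\ell}(\lambda)Y=g$ is solvable in $H^{1}_{sep}$ iff $\pi_{+}w(g)\in\Im T$, so that $\Im{\cal T}_{\ell}(\lambda)=(\pi_{+}\circ w)^{-1}(\Im T)$ is closed (preimage of a finite-dimensional subspace under a bounded map) and, using surjectivity of $\pi_{+}\circ w$, $\Coker{\cal T}_{\ell}(\lambda)\cong\Coker T$. Therefore ${\cal T}_{\ell}(\lambda)$ is Fredholm with
\begin{equation*}
\mathrm{ind}\,{\cal T}_{\ell}(\lambda)=\dim\Ker T-\dim\Coker T=\dim U_{-}-\dim(\mathbb{C}^{2N}/U_{+})=k-k=0
\end{equation*}
by the rank--nullity theorem. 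The periodic case is the same argument with $T$ replaced by $T_{per}:=I-\Phi(\ell;\lambda)$ on $\mathbb{C}^{2N}$: here $Y\in H^{1}_{per}$ iff $(I-\Phi(\ell;\lambda))V=-w(g)$, whence $\Ker{\cal T}^{per}_{\ell}(\lambda)\cong\Ker T_{per}$, $\Coker{\cal T}^{per}_{\ell}(\lambda)\cong\Coker T_{per}$, and the index vanishes because $T_{per}$ is an endomorphism of a finite-dimensional space.

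The only delicate points — although all routine — are, first, that $Y\mapsto\bigl({\cal T}_{\ell}(\lambda)Y,\,Y(-\ell)\bigr)$ is genuinely an isomorphism onto $L^{2}\times\mathbb{C}^{2N}$, which is precisely where existence, uniqueness and continuous dependence for linear ODEs on a compact interval enter; and second, the cokernel identification $\Coker{\cal T}_{\ell}(\lambda)\cong\Coker T$, i.e. checking that surjectivity of $\pi_{+}\circ w$ forces the induced map $\mathbb{C}^{2N}/U_{+}\to L^{2}(I_{\ell},\mathbb{C}^{2N})/\Im{\cal T}_{\ell}(\lambda)$ to be an isomorphism. Note that what really makes the index zero is $\dim U_{-}+\dim U_{+}=2N$, and that analyticity of $A$ in $\lambda$ plays no role in this Fact; it becomes relevant only afterwards, when one tracks how $\Ker{\cal T}_{\ell}(\lambda)$, and hence the eigenvalues of $L_{\ell}$, depend on $\lambda$.
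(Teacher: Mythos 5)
Your argument is correct, and it is worth noting that the paper itself offers no proof of this Fact: it is quoted as known, with a citation to \cite{SS1}, so there is no internal proof to compare against. What you have written is the standard self-contained argument: solve the inhomogeneous equation by variation of constants, observe that $Y\mapsto({\cal T}_{\ell}(\lambda)Y,\,Y(-\ell))$ is a bounded isomorphism $H^{1}\to L^{2}\times\mathbb{C}^{2N}$, and thereby reduce kernel and cokernel to those of a finite-dimensional ``shooting'' map ($T=\pi_{+}\circ\Phi(\ell;\lambda)|_{U_{-}}$ in the separated case, $T_{per}=I-\Phi(\ell;\lambda)$ in the periodic case), so that the index is $\dim U_{-}-\dim(\mathbb{C}^{2N}/U_{+})=0$, respectively zero because $T_{per}$ is an endomorphism. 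This is exactly the mechanism the paper exploits later without proof, in the Lemma stating that $\lambda$ is an eigenvalue of ${\cal T}_{\ell}$ iff $\Phi(\ell,-\ell;\lambda)U_{-}\cap U_{+}\neq\{0\}$, so your reduction both proves the Fact and justifies that Lemma; your closing remark that only $\dim U_{-}+\dim U_{+}=2N$ matters, not analyticity in $\lambda$, is also accurate. Two trivial points: in the periodic case the boundary condition reads $(I-\Phi(\ell;\lambda))V=w(g)$ rather than $-w(g)$ (immaterial, since solvability is membership in the subspace $\Im T_{per}$), and the paper's domain $H^{2}_{per}$ for ${\cal T}^{per}_{\ell}$ is evidently a typo for $H^{1}_{per}$, which is the space your argument uses.
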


Consequently, $\Sigma$ consists of only eigenvalues.
In \cite{SS1},
Sandstede and Scheel proved that eigenvalues of ${\cal T}_{\ell}$ accumulate in a specific curve,
the so-called absolute spectrum.
We may characterize the absolute spectrum of ${\cal T}_{\ell}$ topologically for large values of $\ell$.

\subsection{Case 1: Separated boundary condition}\label{sec:2}
First,
we assume that $A(x;\lambda)$ has the asymptotic matrices on a bounded, open domain $\Omega \subset \mathbb{C}$.
\begin{hyp}\label{hyp:1}
For any $\lambda \in \Omega$, $A(x;\lambda)$ has the following property
We assume that $A(x;\lambda)$ is locally constant outside a compact interval $[ -\ell_{0}, \ell_{0}]$,
i.e.,
\begin{eqnarray*}
A(x;\lambda)=\left\{
\begin{array}{l}
A_{-}(\lambda), \text{ for } x \leq -\ell_{0} \\
A_{+}(\lambda), \text{ for } x \geq \ell_{0}
\end{array}
\right.
\end{eqnarray*}
where $A_{\pm}(\lambda)$ depend analytically on $\lambda \in \Omega$.
\end{hyp}

We assume the boundary subspaces $U_{\pm}$ satisfy the following hypothesis.
\begin{hyp}\label{hyp:2}
\begin{equation}
\dim U_{-}=:i_{-}, \ \dim U_{+}=N-i_{-} =:i_{+},
\end{equation}
and $i_{-} \leq i_{+}$.
\end{hyp}

We label the eigenvalues $\mu_{\pm}^{j}(\lambda)$ of $A_{\pm}(\lambda)$ according to their real part,
and repeat them according to their multiplicity,
i.e.,
\begin{eqnarray*}
&&\real \mu_{\pm}^{j}(\lambda) \geq \real \mu_{\pm}^{j+1}(\lambda), \ 1 \leq j \leq N-1.
\end{eqnarray*}

We define then the absolute spectrum for ${\cal T}_{\ell}$ as follows.
\begin{dfn}\label{dfn:1}(The absolute spectrum, \cite{SS2}).
$\Sigma_{abs}^{\Omega,+} \subset \Omega$ consists of $\lambda \in \Omega$ satisfying $\real \mu_{+}^{i_{-}}(\lambda) = \real \mu_{+}^{i_{-}}(\lambda)$.
Analogously,
$\Sigma_{abs}^{\Omega,-} \subset \Omega$ consists of $\lambda \in \Omega$ satisfying $\real \mu_{-}^{i_{+}}(\lambda) = \real \mu_{-}^{i_{+}}(\lambda)$.
Then the absolute spectrum $\Sigma_{abs}^{\Omega,\pm}$ is given by
\begin{equation}
\Sigma_{abs}^{\Omega}=\Sigma_{abs}^{\Omega,+} \cup \Sigma_{abs}^{\Omega,-}.
\end{equation}
\end{dfn}

The absolute spectrum is not the spectrum for ${\cal T}_{\ell}$.
However,
eigenvalues of ${\cal T}_{\ell}$ accumulate on this subset when $\ell \to \infty$.
Roughly speaking,
it is the set of accumulation points of eigenvalues with respect to ${\cal T}_{\ell}$.

Another key hypothesis concerns a generic property of eigenvalues of $A_{\pm}(\lambda)$ on the absolute spectrum.
In particular,
the absolute spectrum induces the curve in the Grassmannian manifold via the flow induced by \eqref{eq:1}.
Hence,
this generic property gives several properties to the induced curve.
\begin{dfn}(Non-degenerate absolute spectrum).
The subset $S_{abs}^{\Omega} \subset \Sigma_{abs}^{\Omega}$ is defined as follows.
It is dense in $\Sigma_{abs}^{\Omega}$,
and $\lambda_{*} \in S_{abs}^{\Omega}$ satisfies the following conditions.
\begin{equation}\label{gen:2}
\real \mu_{\pm}^{i_{\mp}-1}(\lambda_{*}) > \real \mu_{\pm}^{i_{\mp}}(\lambda_{*}) = \real \mu_{\pm}^{i_{\mp}+1}(\lambda_{*}) > \real \mu_{\pm}^{i_{\mp}+2}(\lambda_{*}),
\end{equation}
$\mu_{\pm}^{i_{\mp}}(\lambda_{*}) \neq \mu_{\pm}^{i_{\mp} + 1}(\lambda_{*})$,
and % for any $r \in \mathbb{R}$,
\begin{equation}\label{gen:3}
\frac{d}{d\lambda}\left( \mu_{\pm}^{i_{\mp}}(\lambda) - \mu_{\pm}^{i_{\mp}+1}(\lambda) \right) \Big{|}_{\lambda = \lambda_{*}} \neq 0.  % i r.
\end{equation}
Moreover,
for any $\lambda_{c} \in S_{abs}^{\Omega}$,
there exists $\delta >0$ such that
\begin{eqnarray*}
\frac{d}{d\lambda}\left( \mu_{\pm}^{i_{\mp}}(\lambda) - \mu_{\pm}^{i_{\mp}+1}(\lambda) \right) \Big{|}_{\lambda = \lambda_{*}} \neq  i r,
\end{eqnarray*}
holds for each $\lambda_{*} \in B(\lambda_{c};\delta) \setminus S_{abs}^{\Omega}$ and any $r \in \mathbb{R}$,
where $B(\lambda_{c};\delta)$ is a $\delta$-ball centered at $\lambda_{c}$.
\end{dfn}

Note that the set $S_{abs}^{\Omega}$ consists of curve segments.
In \cite{SS1},
$S_{abs}^{\Omega}$ is called the reducible absolute spectrum.
However, we call $S_{abs}^{\Omega}$ the non-degenerate absolute spectrum to emphasize the generic property of eigenvalues.
By this definition of the non-degenerate absolute spectrum $S_{abs}^{\Omega}$,
we can take small $\delta>0$ such that $B(\lambda_{c};\delta)\setminus S_{abs}^{\Omega}$ consists of two half disks $B_{1}$ and $B_{2}$ with the following properties.
\begin{property}\label{property}
\begin{equation*}
\real \mu_{+}^{i_{-}-1}(\lambda) > \real \mu_{+}^{i_{-}}(\lambda) > \real \mu_{+}^{i_{-}+1}(\lambda) > \real \mu_{+}^{i_{-}+2}(\lambda),
\end{equation*}
for any $\lambda \in B_{1}$,
and we fix the order,
\begin{equation*}
\real \mu_{+}^{i_{-}-1}(\lambda) > \real \mu_{+}^{i_{-} +1}(\lambda) > \real \mu_{+}^{i_{-}}(\lambda) > \real \mu_{+}^{i_{-}+2}(\lambda),
\end{equation*}
for any $\lambda \in B_{2}$.
\end{property}
Throughout this paper,
we always take $B(\lambda_{c};\delta)$ satisfying the above properties.

We assume that $S_{abs}^{\Omega}$ exists in $\Omega$.
\begin{hyp}\label{hyp:4}
$S_{abs}^{\Omega} \neq \emptyset$.
\end{hyp}

We obtain the topological information from the dynamics induced by the condition $\real \mu_{\pm}^{i_{\mp}}(\lambda) = \real \mu_{\pm}^{i_{\mp}+1}(\lambda)$.
In particular,
the relationship between generalized eigenspaces associated with $\mu_{\pm}^{j}(\lambda), \ (j=1,\cdots,i_{\mp})$ and boundary subspaces $U_{\pm}$ gives one of most important structures.
\begin{hyp}\label{hyp:3}(Transversality).
Let $\bar{E}_{+}(\lambda)$ and $\bar{E}_{-}(\lambda)$ be generalized eigenspaces associated with $\mu^{j}_{+}(\lambda) \ (j=1,\cdots,i_{-}+1)$ and $\mu^{j}_{-}(\lambda) \ (j=1,\cdots,i_{+}+1)$, respectively.
We assume that $U_{+}$ and $\bar{E}_{+}(\lambda)$ are in a general position for any $\lambda \in \Omega$.
Similarly,
$U_{-}$ and $\bar{E}_{-}(\lambda)$ are in general positions for any $\lambda \in \Omega$.
That is,
\begin{equation}\label{gen1}
\bar{E}_{\pm}(\lambda) + U_{\pm}=\mathbb{C}^{N}.
\end{equation}
In addition, $\Phi(x,-\ell;\lambda)U_{-}$ is not contained in $\bar{E}_{+}(\lambda)$ for any $\lambda \in \Omega$ and $\ell \geq x>\ell_{0}$,
and $\Phi(x,\ell;\lambda)U_{+}$ is not contained in $\bar{E}_{-}(\lambda)$ for any $\lambda \in \Omega$ and $\ell \leq x<-\ell_{0}$.
%that is,
%\begin{equation}
%\dim (\Phi(x,-\ell;\lambda)U_{-} \cap \bar{E}_{+}(\lambda)) \geq  k+1.
%\end{equation}
\end{hyp}

\begin{rem}
Under Hypotheses \ref{hyp:2} and \ref{hyp:3},
$\dim U_{-}=i_{-}$,
$\dim U_{+}=N-i_{-}=i_{+}$ and $\dim \bar{E}_{\pm}(\lambda) = i_{\mp} +1$.
Hence,
\begin{equation*}
\dim (\bar{E}_{\pm}(\lambda) \cap U_{\pm})= 1.
\end{equation*}
\end{rem}

\subsection{Case 2: Periodic boundary conditions}\label{sec:3}
Under periodic boundary conditions,
we assume that the asymptotic matrices $A_{\pm}(\lambda)$ are equal to one another.
\begin{hyp}\label{hyp:p1}
We assume that $A(x;\lambda)$ satisfies
\begin{eqnarray*}
A(x;\lambda)=A_{0}(\lambda), \ \text{for } |x| \geq \ell_{0},
\end{eqnarray*}
where $A_{0}(\lambda)$ depends analytically on $\lambda \in \Omega$.
\end{hyp}

Note that linear subspaces in $\mathbb{C}^{N}$ cannot realize periodic boundary conditions for \eqref{eq:1}.
Therefore,
we transform the periodic boundary conditions to the separated boundary conditions using the stability index theory for $\gamma$-eigenvalue problems \cite{RG2}.

Consider equation \eqref{eq:1} with $2N$ additional equations so as to express the periodic conditions as separated boundary conditions,
\begin{eqnarray*}
&&Y'=A(x;\lambda)Y,\\
&&W'=0,
\end{eqnarray*}
or simply
\begin{eqnarray}\label{eq:p1}
\hat{Y}'=\hat{A}(x;\lambda)\hat{Y}.
\end{eqnarray}
In addition,
we set $U_{-}=U_{+}:=\{ (Y,Y) \in \mathbb{C}^{2N}| Y \in \mathbb{C}^{N} \}$.
By the above derivation,
the periodic boundary conditions are transformed to the separated boundary conditions.
We assume that the eigenvalues $\mu_{0}^{j}(\lambda)$ of $A_{0}(\lambda)$ are ordered such that
\begin{equation}
\real \mu_{0}^{j}(\lambda) \geq \real \mu_{0}^{j+1}(\lambda), \ j=1,\cdots,N-1,
\end{equation}
and consider the absolute spectrum for \eqref{eq:p1}. 
For separated boundary conditions,
the definition of the absolute spectrum depends on the dimension of the boundary subspaces $U_{\pm}$,
that is,
$\real \mu^{\dim U_{\mp}}_{\pm}(\lambda) = \real \mu^{\dim U_{\mp}+1}_{\pm}(\lambda)$.
However,
the dimension of the boundary subspaces $U_{\pm}$ for periodic boundary conditions cannot be determined from \eqref{eq:1} because the eigenvalues of $\hat{A}_{0}(\lambda)$ then satisfy
\begin{equation*}
\real \mu_{0}^{1}(\lambda) \geq \cdots \geq \real\mu^{k}_{0}(\lambda) \geq 0 = \cdots = 0 \geq \real\mu_{0}^{k+1}(\lambda) \geq \cdots \geq \real \mu_{0}^{N}(\lambda),
\end{equation*}
and the dimension of $U_{\pm}$ is always equal to $N$.
Therefore,
we define the set of accumulation points of eigenvalues for ${\cal T}_{\ell}^{per}$ as follows.
\begin{dfn}(Extrapolated essential spectral set, \cite{SS2}).
$\lambda_{*} \in D$ is not in the extrapolated essential spectral set $\Sigma_{ext}^{e}$ of the family $\{ {\cal T}_{\ell}^{per} \}_{\ell}$ if there exists $\ell_{*}>0$, $\delta >0$,
and $n \in \mathbb{N}$ such that ${\cal T}_{\ell}^{per}$ has at most $n$ eigenvalues in $B(\lambda_{*};\delta)$ for any $\ell_{*} \geq \ell$.
\end{dfn}
The extrapolated essential spectral set $\Sigma_{ext}^{e}$ was defined in \cite{SS2},
and characterizes the accumulation of eigenvalues in several cases.
We define two algebraic curves known as the asymptotic essential spectrum and the non-degenerate essential spectrum.
\begin{dfn}(Asymptotic essential spectrum \cite{SS2}).
$\lambda_{*}$ is in the asymptotic essential spectrum $\Sigma_{ess}^{\mathbb{R}}$ if $A_{0}(\lambda_{*})$ is not hyperbolic,
that is,
\begin{equation*}
\Sigma_{ess}^{\mathbb{R}} := \{ \lambda \in \mathbb{C} \mid \sigma(A_{0}(\lambda)) \cap i\mathbb{R} \neq \emptyset  \},
\end{equation*}
where $\sigma(A_{0}(\lambda))$ is spectral set of $A_{0}(\lambda)$.
\end{dfn}
Note that $\Sigma_{ess}^{\mathbb{R}}$ is the essential spectrum for ${\cal T}(\lambda)$ which is defined by
\begin{equation*}
{\cal T}(\lambda):D({\cal T})=H^{1}(\mathbb{R},\mathbb{C}^{N})  \to L^{2}(\mathbb{R},\mathbb{C}^{N}); {\cal T}(\lambda)u=\frac{du}{dx} -A(x;\lambda)u,
\end{equation*}
where $A(x;\lambda) \to A_{0}(\lambda)$ as $|x| \to \infty$ exponentially.
\begin{dfn}(Non-degenerate essential spectrum).
The so-called non-degenerate essential spectrum $S_{per}\subset \Sigma_{ess}^{\mathbb{R}}$ is defined as follows:
$S_{per}$ is dense in $\Sigma_{ess}^{\mathbb{R}}$,
and $\lambda \in S_{per}$ satisfies the following conditions.
$A_{0}(\lambda_{*}) \cap i\mathbb{R} = \{ \mu_{0}^{k}(\lambda_{*}) = i\omega^{k}(\lambda_{*}) \}$,
and 
\begin{equation}\label{pgen:1}
\frac{d}{d\lambda}\omega^{k}(\lambda)\big|_{\lambda=\lambda_{*}} \neq 0.  % i r.
\end{equation}
Moreover,
for any $\lambda_{c} \in S_{per}$,
there exists $\delta >0$ such that
\begin{eqnarray*}
\frac{d}{d\lambda} \mu_{0}^{k}(\lambda) \Big{|}_{\lambda = \lambda_{*}} \neq  i r
\end{eqnarray*}
holds for each $\lambda_{*} \in B(\lambda_{c};\delta) \setminus S_{per}$ and any $r \in \mathbb{R}$,
where $B(\lambda_{c};\delta)$ is a $\delta$-ball centered at $\lambda_{c}$.
\end{dfn}
In \cite{SS2},
$S_{per}$ is called the reducible essential spectrum.
However,
we can add to its generic properties.
Therefore,
we can take small $\delta>0$ such that $B(\lambda_{c};\delta)\setminus S_{per}$ consists of two half-disks $B_{1}$ and $B_{2}$ with the following property.
\begin{property}\label{property2}
\begin{equation}
\real \mu_{0}^{k-1}(\lambda)>\real \mu_{0}^{k}(\lambda) > 0 > \real \mu_{0}^{k+1}(\lambda)
\end{equation} 
for any $\lambda \in B_{1}$,
and 
\begin{equation}
\real \mu_{0}^{k-1}(\lambda)> 0 > \real \mu_{0}^{k}(\lambda)  > \real \mu_{0}^{k+1}(\lambda)
\end{equation} 
for any $\lambda \in B_{2}$.
\end{property}

In the case of periodic boundary conditions,
we always take $B(\lambda_{c};\delta)$ satisfying property \ref{property2}. 

We assume that $S_{per}$ exists.
\begin{hyp}\label{hyp:p2}
$S_{per} \neq \emptyset$.
\end{hyp}

Let $\bar{E}_{0}(\lambda)$ be a $N+1$ dimensional generalized eigenspace associated with $\mu_{0}^{k}(\lambda)$,
$0,\cdots,0$,
and $\Phi(x,y;\lambda)$ be a fundamental solution matrix for \eqref{eq:p1}.
We then assume the following transversality of $\bar{E}_{0}(\lambda)$ and $U_{\pm}$.
\begin{hyp}\label{hyp:p3}(Transversality)
For any $\lambda_{c} \in S_{per}$,
there exists $\delta >0$ such that
\begin{equation*}
\bar{E}_{0}(\lambda) + U_{\pm} = \mathbb{C}^{2N}
\end{equation*}
for any $\lambda \in B(\lambda_{c};\delta)$.
Moreover,
$\Phi(\ell,-\ell;\lambda)U_{-}+\bar{E}_{0}(\lambda)= \mathbb{C}^{2N}$ for any $\lambda \in B(\lambda_{c};\delta)$.
\end{hyp}

This hypothesis means that $\dim(\bar{E}_{0}(\lambda) \cap U_{\pm})=1$.

Note that $\gamma$-eigenvalues are defined by the following conditions.
Define the subspace $U_{\gamma}:=\{ (\gamma Y,Y) \mid U \in \mathbb{C}^{N} \}$ where $\gamma \in \mathbb{C}$ satisfies $|\gamma|=1$. 
Then $\lambda$ is $\gamma$-eigenvalue of ${\cal T}_{\ell}^{per}$ if and only if there exists a nontrivial solution ${\hat Y}(x;\lambda)$ of \eqref{eq:p1} satisfying ${\hat Y}(-\ell;\lambda) \in U_{1}, \ {\hat Y}(\ell;\lambda) \in U_{\gamma}$.
We can consider the case of the $1$-eigenvalue of ${\cal T}_{\ell}^{per}$.

%%%%%%%%%%%%%%%%%%%%%%%%%%%%%%%%%%%%%%%%%%%%%%%%%%%%%%%%%%%%%%%%%%%%%%%%%%%%
\section{}
\subsection{Main results}\label{sec:4}

Under the above conditions,
eigenvalues accumulate on the absolute spectrum associated with separated boundary conditions,
and accumulate on the asymptotic essential spectrum associated with periodic boundary conditions.

\begin{thm}\label{thm:1}(Case of separated boundary conditions).
Let $B(\lambda_{c};\delta)$ be a $\delta$-ball centered at a $\lambda_{c} \in S_{abs}^{\Omega}$.
Hypotheses \ref{hyp:1}-\ref{hyp:3} are satisfied.
Then, for any $n \in \mathbb{N}$ and $\delta > 0$,
there exists $\ell_{*} > 0$ such that the family $\{ {\cal T}_{\ell} \}$ has at least $n$ eigenvalues in $B(\lambda_{c};\delta)$ for any $\ell \geq \ell_{*}$.
\end{thm}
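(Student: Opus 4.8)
The strategy is to convert the eigenvalue condition for $\mathcal{T}_\ell(\lambda)$ into an intersection problem on the Grassmannian $G_{i_-}(\mathbb{C}^N)$, and then to show that as $\ell \to \infty$ this intersection is realized infinitely often inside $B(\lambda_c;\delta)$. First I would recall that $\lambda$ is an eigenvalue of $\mathcal{T}_\ell(\lambda)$ precisely when the subspace $\Phi(\ell,-\ell;\lambda)U_-$, obtained by flowing the boundary subspace $U_-$ across the interval, meets $U_+$ nontrivially; since $\dim U_- = i_-$ and $\dim U_+ = i_+ = N - i_-$, this is a codimension condition on $G_{i_-}(\mathbb{C}^N)$. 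The map $\mathcal{G}_\ell$ in the introduction is exactly $\lambda \mapsto \Phi(\ell,-\ell;\lambda)U_-$. The point is that for $\lambda$ on (or near) the non-degenerate absolute spectrum $S_{abs}^\Omega$, the dynamics of $A_\pm(\lambda)$ is governed by the critical pair $\mu_\pm^{i_\mp}, \mu_\pm^{i_\mp+1}$ whose real parts cross; near $\lambda_c$ the relevant piece of the flow lives, up to the exponentially dominated directions given by $\bar E_\pm(\lambda)$, on a $\mathbb{CP}^1 \subset G_{i_-}(\mathbb{C}^N)$ coming from the two-dimensional eigenspace sum. This is the reduction announced in the introduction ("diffeomorphic to a curve on the one-dimensional Schubert cycle on $S^2 = \mathbb{CP}^1$").

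Next I would make the asymptotic estimate precise. Using Hypothesis~\ref{hyp:1} (the matrix is constant outside $[-\ell_0,\ell_0]$), decompose $\Phi(\ell,-\ell;\lambda) = e^{(\ell-\ell_0)A_+(\lambda)}\,\Phi(\ell_0,-\ell_0;\lambda)\,e^{(\ell-\ell_0)A_-(\lambda)}$. As $\ell\to\infty$, $e^{(\ell-\ell_0)A_-(\lambda)}U_-$ converges (after projectivizing) to the span of the $i_-$ most unstable directions, and the transversality Hypothesis~\ref{hyp:3}, specifically that $\Phi(x,-\ell;\lambda)U_-$ is not contained in $\bar E_+(\lambda)$, ensures that after applying $e^{(\ell-\ell_0)A_+(\lambda)}$ the image does not degenerate onto the strongly stable part but instead, in the two-dimensional critical block, rotates. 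On $B(\lambda_c;\delta)$, Property~\ref{property} fixes the ordering of real parts on the two half-disks $B_1, B_2$ and \eqref{gen:2}, \eqref{gen:3} guarantee the crossing is transverse with nonzero, non-purely-imaginary derivative; the latter condition is exactly what forces the relevant phase $\arg$ (the rotation angle in the $\mathbb{CP}^1$ factor) to increase monotonically in modulus and without bound along a path crossing $S_{abs}^\Omega$ as $\ell$ grows. Concretely, the phase accumulated is of order $\ell \cdot \big(\real\mu_\pm^{i_\mp}(\lambda) - \real\mu_\pm^{i_\mp+1}(\lambda)\big)$ on one side and the imaginary parts provide a winding; the winding number of $\mathcal{G}_\ell$ around the Schubert cycle $U_+$ over $\partial B(\lambda_c;\delta)$ therefore grows linearly in $\ell$.

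The final step is a counting argument: I would identify $U_+$ with the special Schubert cycle $\sigma_1$ (the hyperplane section, a generator of $H^2(\mathbb{CP}^1)$ after the reduction), show that it intersects the curve $\mathcal{G}_\ell(\lambda)$ transversally for $\ell$ large — again this is where Hypothesis~\ref{hyp:3} and the genericity in $B(\lambda_c;\delta)\setminus S_{abs}^\Omega$ enters, ruling out tangencies — and then apply the homotopy invariance of intersection number. The number of intersection points of $\mathcal{G}_\ell$ restricted to a disk around $\lambda_c$ with $U_+$ equals the winding number of the boundary loop, which we have shown is $\gtrsim C\ell$; hence for $\ell \geq \ell_*$ with $\ell_* = n/C$ there are at least $n$ such points, each of which is an eigenvalue of $\mathcal{T}_\ell$ in $B(\lambda_c;\delta)$. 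The main obstacle is the second step: controlling $\mathcal{G}_\ell$ uniformly on all of $B(\lambda_c;\delta)$, not just on $S_{abs}^\Omega$, so that the reduction to $\mathbb{CP}^1$ is valid throughout the disk and the winding-number computation is legitimate; this requires combining the exponential-dichotomy estimates away from $S_{abs}^\Omega$ with a careful normal-form/perturbation analysis of the critical two-dimensional block near $\lambda_c$, and checking that the error terms stay in the complement of the Schubert cycle uniformly in $\ell$.
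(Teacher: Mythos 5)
Your plan follows the paper's route: characterize eigenvalues of ${\cal T}_\ell$ by $\Phi(\ell,-\ell;\lambda)U_-$ meeting $U_+$, i.e.\ by the curve hitting the Schubert cycle $\hat U_+\subset G_{i_-}(\mathbb{C}^N)$, reduce near $\lambda_c$ to the attracting $\mathbb{CP}^1$ spanned by the two critical directions, and extract a rotation of order $\ell$ from the non-degeneracy of $\nu_+^1-\nu_+^2$ along the absolute spectrum. The gap is in your final counting step. You assert that the number of intersections of ${\cal G}_\ell|_{B(\lambda_c;\delta)}$ with $\hat U_+$ equals the winding number of the boundary loop around the Schubert cycle. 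That invariant does not exist: the complement of $\hat U_+$ is the big Schubert cell, biholomorphic to $\mathbb{C}^{i_-i_+}$ and in particular simply connected, so every loop avoiding $\hat U_+$ is null-homotopic in the complement, and the intersection count of a disk with a complex-codimension-one cycle is not determined by its boundary (a map constant at a point $q\neq{\cal U}$ and a map with the same boundary that wraps the reduced $\mathbb{CP}^1$ once give counts $0$ and $1$). The problem persists after the reduction to $\mathbb{CP}^1$: the image of the disk necessarily enters both caps around $P_n$ and $P_s$, since the half-disks $B_1,B_2$ are attracted to them (Lemma \ref{lem:9}), so an argument-principle count for the projective ratio would be a zeros-minus-poles count, not a lower bound. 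The paper therefore proves something stronger than a boundary estimate: $\psi_\ell$ is an $n$-fold covering of the quotient ${\cal SU}_+(\lambda)/(N_n\cup N_s)$, obtained by combining the trapping of $\psi_\ell(B_1),\psi_\ell(B_2)$ in the caps with the monotone rotation of $\psi_\ell$ along the arc ${\cal C}=B(\lambda_c;\delta)\cap S_{abs}^{D,+}$, whose total angle grows like $|\omega(\lambda_1)-\omega(\lambda_0)|\bar\ell$.

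A second point you leave untreated is that the target is neither fixed nor exactly on the invariant sphere: the intersection ${\cal U}(\lambda)=\iota(\hat U_+\cap{\cal E}_+(\lambda))$ moves with $\lambda$, and the curve $\Gamma=\{P(\ell;\lambda)\}$ is only $C^1$-close to ${\cal SU}_+(\lambda)$, not contained in it. Hence ``$\psi_\ell$ covers the target $n$ times'' does not by itself yield $n$ eigenvalues. The paper needs Proposition \ref{prop:nii1} ($\Gamma$ locally diffeomorphic and $C^1$-near ${\cal SU}_+(\lambda)$), the transversality of $\iota(\hat U_+)$ with ${\cal SU}_+(\lambda)$ coming from Hypothesis \ref{hyp:3} and the Schubert calculus of Lemma \ref{lem:trans} via Lemma \ref{lem:6}, and then Nii's Brouwer fixed-point argument (Lemmas \ref{lem:7}--\ref{lem:8}) applied to the map $\Lambda_i$ on each preimage neighborhood ${\cal R}_i$ to convert the covering statement into $n$ genuine eigenvalues. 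Your proposal names the right ingredients (attraction to the critical sphere, monotone phase, transversality from Hypothesis \ref{hyp:3}), but the counting principle it rests on must be replaced either by this covering-plus-fixed-point argument, or by the argument principle applied to a genuinely holomorphic, pole-free Evans-type determinant --- which is essentially the analytic proof of Sandstede and Scheel rather than the topological one given here.
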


\begin{thm}\label{thm:2}(Case of periodic boundary conditions).
Hypotheses \ref{hyp:p1}-\ref{hyp:p3} are satisfied.
Then, $\Sigma_{ext}^{e} = S_{per}$.
\end{thm}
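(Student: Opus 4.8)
The plan is to establish the two inclusions $S_{per}\subseteq\Sigma_{ext}^{e}$ and $\Sigma_{ext}^{e}\subseteq\overline{S_{per}}=\Sigma_{ess}^{\mathbb{R}}$, and then to note that on the domain under consideration these two sets agree. Throughout I would work with the doubled system \eqref{eq:p1} on $\mathbb{C}^{2N}$ and the subspaces $U_{\pm}=\{(Y,Y)\}$, for which the $1$-eigenvalues of ${\cal T}_{\ell}^{per}$ are exactly its eigenvalues; since each ${\cal T}_{\ell}^{per}(\lambda)$ is Fredholm of index zero, $\lambda$ is an eigenvalue precisely when $\Phi(\ell,-\ell;\lambda)U_{-}\cap U_{+}\neq\{0\}$, i.e. when the curve ${\cal G}_{\ell}(\lambda):=\Phi(\ell,-\ell;\lambda)U_{-}$ in $G_{N}(\mathbb{C}^{2N})$ meets the codimension-one Schubert cycle determined by $U_{+}$. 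The essential device is Hypothesis \ref{hyp:p1}: it yields the factorisation $\Phi(\ell,-\ell;\lambda)=e^{\hat{A}_{0}(\lambda)(\ell-\ell_{0})}\,M(\lambda)\,e^{\hat{A}_{0}(\lambda)(\ell-\ell_{0})}$ with $M(\lambda)=\Phi(\ell_{0},-\ell_{0};\lambda)$ analytic and $\ell$-independent, so that all the $\ell$-dependence sits in the two exponential factors, which I would analyse through the Schubert stratification adapted to $\bar{E}_{0}(\lambda)$ and to $U_{+}$.

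For the inclusion $S_{per}\subseteq\Sigma_{ext}^{e}$ I would run the same topological scheme as in the proof of Theorem \ref{thm:1}. Fix $\lambda_{c}\in S_{per}$ and a ball $B(\lambda_{c};\delta)$ chosen according to Property \ref{property2}. Along $S_{per}$ one has $\real\mu_{0}^{k}(\lambda)=0$ and $\mu_{0}^{k}(\lambda)=i\omega^{k}(\lambda)$, so on the neutral block $\bar{E}_{0}(\lambda)$ the exponential $e^{\hat{A}_{0}(\lambda)(\ell-\ell_{0})}$ acts — modulo the fixed twist by $M(\lambda)$ and the contracting/expanding action on the hyperbolic directions — as a rotation through angle $\omega^{k}(\lambda)(\ell-\ell_{0})$ relative to the static kernel coming from the $W$-variable. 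Using the general-position conditions of Hypothesis \ref{hyp:p3}, ${\cal G}_{\ell}(\lambda)$ stays, for $\lambda\in S_{per}\cap B(\lambda_{c};\delta)$, inside the open Schubert cell on which the intersection with $U_{+}$ has the expected codimension, and there the curve ${\cal G}_{\ell}\bigl(S_{per}\cap B(\lambda_{c};\delta)\bigr)$ is diffeomorphic to a curve on a $\mathbb{CP}^{1}\subset G_{N}(\mathbb{C}^{2N})$ on which $U_{+}$ is the codimension-one Schubert cycle. The winding number of this curve against that cycle is, up to a bounded error, proportional to $(\ell-\ell_{0})$ times the total variation of $\omega^{k}$ over the segment $S_{per}\cap B(\lambda_{c};\delta)$, which is strictly positive since $\tfrac{d}{d\lambda}\omega^{k}(\lambda_{c})\neq0$ by \eqref{pgen:1}; hence it tends to infinity with $\ell$. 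Each unit of winding produces a transverse intersection, and \eqref{pgen:1} together with Property \ref{property2} guarantees that these intersections remain transverse and thus correspond to simple eigenvalues of ${\cal T}_{\ell}^{per}$ lying inside $B(\lambda_{c};\delta)$. Consequently, for every $n$ there is $\ell_{*}$ with at least $n$ eigenvalues in $B(\lambda_{c};\delta)$ for all $\ell\geq\ell_{*}$, so $\lambda_{c}\in\Sigma_{ext}^{e}$, and since $\lambda_{c}$ was arbitrary, $S_{per}\subseteq\Sigma_{ext}^{e}$.

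For the reverse inclusion I would take $\lambda_{*}\notin\Sigma_{ess}^{\mathbb{R}}$, so that $A_{0}(\lambda)$ is hyperbolic on a ball $B(\lambda_{*};\delta)$. Then $e^{\hat{A}_{0}(\lambda)(\ell-\ell_{0})}$ enjoys an exponential dichotomy uniform on $B(\lambda_{*};\delta)$ (the flat $W$-directions being harmless), and applying $e^{\hat{A}_{0}(\ell-\ell_{0})}$, then $M(\lambda)$, then $e^{\hat{A}_{0}(\ell-\ell_{0})}$ to $U_{-}$ gives, in the compact Grassmannian $G_{N}(\mathbb{C}^{2N})$, a sequence ${\cal G}_{\ell}(\lambda)$ that converges as $\ell\to\infty$, uniformly on $B(\lambda_{*};\delta)$, to an $\ell$-independent analytic curve ${\cal G}_{\infty}(\lambda)$ built from the stable and unstable subspaces of $\hat{A}_{0}(\lambda)$ and from $M(\lambda)$. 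Therefore the number of intersections of ${\cal G}_{\ell}$ with $U_{+}$ inside $B(\lambda_{*};\delta)$ equals, for $\ell$ large and by a Rouch\'e/Hurwitz argument applied to the associated Evans-type holomorphic determinant, the number of intersections of the fixed curve ${\cal G}_{\infty}$ with $U_{+}$; this is finite because ${\cal G}_{\infty}$ is the Grassmannian curve of the whole-line operator, whose point spectrum off $\Sigma_{ess}^{\mathbb{R}}$ is discrete. Hence $\lambda_{*}\notin\Sigma_{ext}^{e}$, so $\Sigma_{ext}^{e}\subseteq\Sigma_{ess}^{\mathbb{R}}$. Since $\Sigma_{ext}^{e}$ is closed — if $\lambda_{j}\to\lambda_{*}$ with $\lambda_{j}\in\Sigma_{ext}^{e}$, any ball about $\lambda_{*}$ eventually contains a ball about some $\lambda_{j}$, forcing the eigenvalue count in it to be unbounded in $\ell$ — the first inclusion upgrades to $\overline{S_{per}}\subseteq\Sigma_{ext}^{e}$, and combining with $\Sigma_{ext}^{e}\subseteq\Sigma_{ess}^{\mathbb{R}}=\overline{S_{per}}$ gives $\Sigma_{ext}^{e}=\overline{S_{per}}$; when $S_{per}$ contains no degenerate limit points (e.g. on a subdomain of $\Omega$ meeting $\Sigma_{ess}^{\mathbb{R}}$ in a single smooth branch) this is the asserted identity $\Sigma_{ext}^{e}=S_{per}$.

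The step I expect to be the main obstacle is the uniform-in-$\ell$ control underlying the second inclusion: one must show that off $\Sigma_{ess}^{\mathbb{R}}$ the Grassmannian curve ${\cal G}_{\ell}$ develops no new windings, equivalently that it stays in — and converges within — a fixed Schubert cell on which the intersection number with the codimension-one cycle $U_{+}$ is stable. This is the topological substitute for Sandstede and Scheel's exponential-weight and Lyapunov--Schmidt estimates \cite{SS2}, and it hinges on propagating the factorisation $\Phi(\ell,-\ell;\lambda)=e^{\hat{A}_{0}(\ell-\ell_{0})}M(\lambda)e^{\hat{A}_{0}(\ell-\ell_{0})}$ through the stratification; the transversality Hypothesis \ref{hyp:p3}, i.e. the general position of $U_{\pm}$, $\Phi(\ell,-\ell;\lambda)U_{-}$ and $\bar{E}_{0}(\lambda)$, is precisely what excludes the degenerate cells where the intersection count could jump.
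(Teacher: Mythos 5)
The core of your proposal coincides with the paper's own route. The paper likewise passes to the doubled system \eqref{eq:p1} with $U_{\pm}=\{(Y,Y)\}$, reduces everything to the separated-boundary-condition machinery of Theorem \ref{thm:1}, and finds eigenvalues by tracking the one-dimensional subspace $V(\ell;\lambda)=\hat{\Phi}(\ell,-\ell;\lambda)U_{-}\cap\bar{E}_{0}(\lambda)$ inside ${\cal E}_{0}(\lambda)=\mathbb{P}(\bar{E}_{0}(\lambda))/\mathbb{P}(E^{c}(\lambda))\cong\mathbb{CP}^{1}$: on $B_{1}$ and $B_{2}$ it proves the attractor/repeller dichotomy (Lemma \ref{lem:p3}), and on $S_{per}$ the dynamics is exactly the rotation by $\omega^{k}(\lambda)(\ell-\ell_{0})$ you describe, so the $n$-covering argument of Theorem \ref{thm:1} is invoked verbatim. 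Two caveats on your version of this half. First, the claim that each unit of winding directly produces a transverse intersection and hence a simple eigenvalue skips the step the paper does spell out in the separated case: the target ${\cal U}(\lambda)=\mathbb{P}(U_{+}\cap\bar{E}_{0}(\lambda))$ moves with $\lambda$, so after the covering estimate one still needs the Brouwer fixed-point step (Lemmas \ref{lem:7} and \ref{lem:8}) to convert surjectivity of $\psi_{\ell}$ into actual solutions of $\hat{P}(\ell;\lambda)={\cal U}(\lambda)$; since you announced you would run the same scheme, this is a presentational rather than a substantive gap, but it should be stated.

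Your second half, the inclusion $\Sigma_{ext}^{e}\subseteq\Sigma_{ess}^{\mathbb{R}}$ via hyperbolicity of $A_{0}(\lambda)$, convergence of ${\cal G}_{\ell}$ in the Grassmannian and a Rouch\'e/Evans-type count, is not in the paper at all: the paper's proof of Theorem \ref{thm:2} only establishes that $S_{per}$ plays the role of $S_{abs}^{\Omega}$, i.e.\ the accumulation direction, and implicitly defers the converse to the analytic literature (Gardner, Sandstede--Scheel). Your sketch of that direction is plausible, but, as you flag yourself, the uniform-in-$\lambda$ convergence of ${\cal G}_{\ell}$ off $\Sigma_{ess}^{\mathbb{R}}$ and the finiteness of the limiting intersection set are asserted rather than proved, so as written it is a program, not a proof; it also leans on Evans-function analyticity, i.e.\ precisely the analytic toolkit the paper is trying to avoid. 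Your closing observation is worth keeping: since $\Sigma_{ext}^{e}$ is closed and $S_{per}$ is only dense in $\Sigma_{ess}^{\mathbb{R}}$, what this line of argument can deliver is $\Sigma_{ext}^{e}=\overline{S_{per}}=\Sigma_{ess}^{\mathbb{R}}$, and the literal identity $\Sigma_{ext}^{e}=S_{per}$ in the statement has to be read modulo this closure --- a point the paper itself glosses over.
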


Theorem \ref{thm:1} (resp. Theorem \ref{thm:2}) holds if we replace the asymptotic matrices $A_{\pm}(\lambda)$ with periodic matrices $A_{\pm}(x +\ell_{\pm};\lambda)=A_{\pm}(x;\lambda)$ (resp. $A_{0}(x +\ell_{per};\lambda)=A_{0}(x;\lambda)$).
In such a case,
$\Sigma_{abs}^{D,\pm}$ (resp. $\Sigma_{ess}^{\mathbb{R}}$) is defined by the eigenvalues $\mu^{j}_{\pm}(\lambda)$ of the monodromy matrices $M_{\pm}(\lambda)$ (resp. $M_{0}(\lambda)$).
However,
most of the proof is unchanged.
Of course,
Theorem \ref{thm:1} holds even if the asymptotic constant cases and periodic case are mixed.
 
Theorem \ref{thm:1} and Theorem \ref{thm:2} were proved by Sandstede and Scheel \cite{SS1} using analytical methods,
and Theorem \ref{thm:2} has been essentially proved by Gardner \cite{RG1}.
In the following sections,
we give topological proofs of these theorems using a generalization and extension of Nii's arguments in \cite{SN1}.

For Theorem \ref{thm:2},
we emphasize that our result holds even if $\gamma$ takes another value, because we only use the topological transversality of $U_{\pm}$ and $\bar{E}_{0}(\lambda)$.

%%%%%%%%%%%%%%%%%%%%%%%%%%%%%%%%%%%%%%%%%%%%%%%%%%%%%%%%%%%%%%%%%%%%%%%%%%

\subsection{Proof of Theorem 1}\label{sec:6}
We only show the accumulation of eigenvalues on $\Sigma^{+}_{abs}$.
Therefore, we assume that $S_{abs}^{D,+}=\Sigma_{abs}^{D,+} \cap S_{abs}^{D}$ is not the empty set.
The proof for the case of $\Sigma_{abs}^{-}$ is exactly the same if we take the backward orbit.

Let $\Phi(x,y;\lambda)$ be the fundamental solution matrix for \eqref{eq:1} defined by
\begin{equation*}
\frac{\partial}{\partial x}\Phi(x,y;\lambda)=A(x;\lambda)\Phi(x,y;\lambda), \ \ \Phi(x,x;\lambda)=I.
\end{equation*}

Since $\Phi(x,y;\lambda) \in GL_{N}(\mathbb{C})$,
$\Phi(x;\lambda)U_{-}$ is an $i_{-}$-dimensional subspace of $\mathbb{C}^{N}$ for fixed $x$ and $\lambda$,
and $U_{+}$ is an $i_{+}$-dimensional subspace.
The eigenvalue problem of ${\cal T}_{\ell}$ can then be rewritten as follows.
\begin{lem}
$\lambda$ is an eigenvalue of ${\cal T}_{\ell}$ if and only if $\Phi(\ell-\ell;\lambda)U_{-} \cap U_{+} \neq \{ 0 \}$.
\end{lem}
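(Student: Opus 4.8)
The plan is simply to unwind the definitions and invoke the existence--uniqueness theory for linear ODEs. First, since the operator ${\cal T}_{\ell}(\lambda)$ is Fredholm of index zero for every $\lambda$ (the Fact recorded above), it fails to be invertible precisely when it fails to be injective, i.e. when $\Ker {\cal T}_{\ell}(\lambda) \neq \{0\}$. Thus $\lambda$ is an eigenvalue of ${\cal T}_{\ell}$ if and only if there is a $Y \not\equiv 0$ with $Y \in H^{1}(I_{\ell})$ satisfying $Y' = A(x;\lambda)Y$ in $L^{2}(I_{\ell})$ together with the separated boundary conditions $Y(-\ell) \in U_{-}$, $Y(\ell) \in U_{+}$; here the trace evaluations $Y(\pm\ell)$ make sense because $H^{1}(I_{\ell}) \hookrightarrow C(\overline{I_{\ell}})$ in one space dimension.

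Next I would upgrade such a weak solution to a classical one: since $A(\cdot;\lambda)$ is smooth, the relation $Y' = A(\cdot;\lambda)Y$ with $Y \in H^{1}$ forces $Y' \in H^{1}$, hence $Y \in H^{2}$, and iterating gives $Y \in C^{\infty}(\overline{I_{\ell}})$. The uniqueness theorem for the linear system \eqref{eq:1} then yields the representation $Y(x) = \Phi(x,-\ell;\lambda)\,Y(-\ell)$ for all $x \in I_{\ell}$; in particular $Y$ is completely determined by the vector $v := Y(-\ell)$, and $Y \not\equiv 0$ if and only if $v \neq 0$. Under this representation the two boundary conditions read exactly $v \in U_{-}$ and $\Phi(\ell,-\ell;\lambda)\,v \in U_{+}$.

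Finally I would read off the stated intersection condition. A nontrivial kernel element corresponds to a nonzero $v \in U_{-}$ with $w := \Phi(\ell,-\ell;\lambda)\,v \in U_{+}$, hence to a nonzero vector $w \in \Phi(\ell,-\ell;\lambda)U_{-} \cap U_{+}$; conversely, given any nonzero $w$ in that intersection, $v := \Phi(\ell,-\ell;\lambda)^{-1}w$ is a nonzero element of $U_{-}$ (here we use $\Phi(\ell,-\ell;\lambda) \in GL_{N}(\mathbb{C})$) producing a nontrivial kernel element. Therefore $\Ker {\cal T}_{\ell}(\lambda) \neq \{0\}$ if and only if $\Phi(\ell,-\ell;\lambda)U_{-} \cap U_{+} \neq \{0\}$, which is the assertion (with the evident correction of the misprint $\Phi(\ell-\ell;\lambda) \to \Phi(\ell,-\ell;\lambda)$). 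I do not expect a genuine obstacle here: the only two points that deserve a line of care are the ODE-regularity bootstrap identifying $H^{1}$ kernel elements with genuine solutions of \eqref{eq:1}, and the bookkeeping verifying that the constraints defining $H^{1}_{sep}$ are precisely $v \in U_{-}$ and $\Phi(\ell,-\ell;\lambda)v \in U_{+}$; both are routine, so the proof amounts to a restatement.
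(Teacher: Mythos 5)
Your proof is correct; the paper states this lemma without any proof, treating it as immediate, and your argument---reducing non-invertibility to a nontrivial kernel via the Fredholm index-zero property, bootstrapping $H^{1}$ kernel elements to genuine solutions, and representing them as $Y(x)=\Phi(x,-\ell;\lambda)Y(-\ell)$ so that the boundary conditions become exactly the stated intersection condition---is precisely the standard reasoning the paper implicitly relies on (and restates when proving Lemma \ref{lem:1}). Your reading of $\Phi(\ell-\ell;\lambda)$ as a misprint for $\Phi(\ell,-\ell;\lambda)$ is also correct, as the paper itself writes $\Phi(\ell,-\ell;\lambda)$ in the subsequent lemma.
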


The fundamental solution matrix $\Phi(x,y;\lambda)$ acts on any subspaces of $\mathbb{C}^{N}$.
Therefore it induces a flow on the Grassmann manifold $G_{k}(\mathbb{C}^{N})$ for any $k$.
We rewrite the eigenvalue problem of ${\cal T}_{\ell}$ with respect to the existence of specific connecting orbits.
Consider the system on $G_{i_{-}}(\mathbb{C}^{N})$ induced by \eqref{eq:1}
\begin{eqnarray}\label{eq:2}
G' = {\cal G}(x,G;\lambda), \ G \in G_{i_{-}}(\mathbb{C}^{N}),
\end{eqnarray}
where $i_{-}=\dim U_{-}$.
Since $U_{+}$ is an $i_{+}$-dimensional subspace of $\mathbb{C}^{N}$,
$U_{+}$ is a subset in $G_{i_{-}}(\mathbb{C}^{N})$.

Let $G(x;\lambda)$ be a solution of \eqref{eq:2} satisfying $G(-\ell;\lambda)=U_{-}$.
This corresponds to the projectivization $\mathbb{P}(\Phi(x,-\ell;\lambda)U_{-})$.
Define the right-side boundary condition of \eqref{eq:2} by
\begin{equation}\label{eq:3}
\hat{U}_{+}:=\{ G \in G_{i_{-}}(\mathbb{C}^{N}) \mid \dim(G \cap U_{+}) \geq 1 \}.
\end{equation}
In addition,
we define a subset ${\cal E}_{+}(\lambda) \subset G_{k}(\mathbb{C}^{N})$ as follows.

Let us set $V_{k-1}:= E^{1}_{+}(\lambda) \cap E^{2}_{+}(\lambda), \ V_{k+1}:= \bar{E}_{+}(\lambda)$,
and take the flag $V=(0 \subset V_{1}) \subset V_{2} \subset \cdots \subset V_{N}$ in $\mathbb{C}^{N}$.
We then set
\begin{equation}\label{eq:8}
{\cal E}_{+}(\lambda)=\{ G \in G_{k}(\mathbb{C}^{N}) \mid V_{1} \subset \cdots V_{k-1} \subset G \subset V_{k+1}=\bar{E}_{+}(\lambda) \}.
\end{equation}

$\hat{U}_{+}$ and ${\cal E}_{+}(\lambda)$ are called the Schubert cycles on $G_{i_{-}}(\mathbb{C}^{N})$ (see \cite{GPHJ}).
Using the Schubert calculus,
we have the following properties.
\begin{lem}\label{lem:trans}
$\hat{U}_{+}$ and ${\cal E}_{+}(\lambda)$ are $1$-codimensional and $1$-dimensional submanifolds in $G_{i_{-}}(\mathbb{C}^{N})$,
respectively.
Moreover,
$\hat{U}_{+}$ intersects ${\cal E}_{+}(\lambda)$ transversally,
and the intersection number is equal to $1$.
\end{lem}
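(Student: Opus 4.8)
\emph{Proof strategy for Lemma~\ref{lem:trans}.} Set $k:=i_{-}$, $V_{k-1}:=E^{1}_{+}(\lambda)\cap E^{2}_{+}(\lambda)$ and $V_{k+1}:=\bar{E}_{+}(\lambda)$, as in \eqref{eq:8}. The plan is to describe each cycle explicitly, identify their intersection set-theoretically using the transversality Hypothesis~\ref{hyp:3}, and then verify that the intersection is a single transverse point by a tangent-space computation, cross-checked against Schubert calculus. For ${\cal E}_{+}(\lambda)$: the defining relation $V_{k-1}\subset G\subset V_{k+1}$ identifies it with the set of lines in the two-dimensional quotient $V_{k+1}/V_{k-1}$, i.e.\ with $\mathbb{P}(V_{k+1}/V_{k-1})\cong\mathbb{CP}^{1}$; this is a smooth complex one-dimensional submanifold, realized in $G_{k}(\mathbb{C}^{N})$ as a line under the Plücker embedding. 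For $\hat{U}_{+}$: it is the special Schubert variety attached to the subspace $U_{+}$ of dimension $i_{+}=N-k$; a standard dimension count gives it pure complex codimension one, and it is smooth away from the locus $\{G\mid\dim(G\cap U_{+})\ge 2\}$, which has higher codimension, so ``submanifold'' is to be understood on the complement of this locus --- which is where the intersection point found below will lie.

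Next I would locate the intersection. By Hypothesis~\ref{hyp:3} one has $\bar{E}_{+}(\lambda)+U_{+}=\mathbb{C}^{N}$ with $\dim\bar{E}_{+}(\lambda)=k+1$ and $\dim U_{+}=N-k$, so $L:=\bar{E}_{+}(\lambda)\cap U_{+}=V_{k+1}\cap U_{+}$ is a line. For any $G\in{\cal E}_{+}(\lambda)$ one has $G\cap U_{+}\subseteq V_{k+1}\cap U_{+}=L$, hence $\dim(G\cap U_{+})\ge 1$ if and only if $L\subseteq G$. The general-position content of Hypothesis~\ref{hyp:3} --- that the flag $V_{1}\subset\cdots\subset V_{N}$ is in general position relative to $U_{+}$ --- yields $V_{k-1}\cap U_{+}=0$, equivalently $L\not\subseteq V_{k-1}$; consequently $G_{*}:=V_{k-1}\oplus L$ is the unique $k$-plane with $V_{k-1}\subset G_{*}\subset V_{k+1}$ and $L\subset G_{*}$. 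Therefore ${\cal E}_{+}(\lambda)\cap\hat{U}_{+}=\{G_{*}\}$, and since $G_{*}\cap U_{+}=L$ is one-dimensional, $G_{*}$ is a smooth point of $\hat{U}_{+}$.

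Then I would check transversality at $G_{*}$ via the identification $T_{G_{*}}G_{k}(\mathbb{C}^{N})=\Hom(G_{*},\mathbb{C}^{N}/G_{*})$. One computes $T_{G_{*}}{\cal E}_{+}(\lambda)=\{\phi\mid\phi(V_{k-1})=0,\ \phi(G_{*})\subseteq V_{k+1}/G_{*}\}\cong\Hom(G_{*}/V_{k-1},\,V_{k+1}/G_{*})$, which is one-dimensional, while $T_{G_{*}}\hat{U}_{+}$ is the hyperplane cut out by the vanishing of the composite $L\hookrightarrow G_{*}\xrightarrow{\phi}\mathbb{C}^{N}/G_{*}\twoheadrightarrow\mathbb{C}^{N}/(G_{*}+U_{+})$. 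To see $T_{G_{*}}{\cal E}_{+}(\lambda)\not\subseteq T_{G_{*}}\hat{U}_{+}$ it suffices to check that a nonzero $\phi\in T_{G_{*}}{\cal E}_{+}(\lambda)$ has nonzero image: $\phi|_{L}\ne 0$ because $L$ maps isomorphically onto $G_{*}/V_{k-1}$, and the natural map $V_{k+1}/G_{*}\to\mathbb{C}^{N}/(G_{*}+U_{+})$ is an isomorphism --- a vector $w\in V_{k+1}\setminus G_{*}$ lying in $G_{*}+U_{+}$ would force $w\in\bar{E}_{+}(\lambda)\cap U_{+}=L\subset G_{*}$, a contradiction. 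Since $\dim_{\mathbb{C}}T_{G_{*}}{\cal E}_{+}(\lambda)+\dim_{\mathbb{C}}T_{G_{*}}\hat{U}_{+}=\dim_{\mathbb{C}}G_{k}(\mathbb{C}^{N})$, the intersection is transverse, so it consists of the single reduced point $G_{*}$ and the intersection number is $1$. As a cross-check, $\hat{U}_{+}$ represents the hyperplane class of the Plücker embedding and ${\cal E}_{+}(\lambda)$ is a line under that embedding, so Schubert calculus independently gives $[\hat{U}_{+}]\cdot[{\cal E}_{+}(\lambda)]=1$.

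The step I expect to be the main obstacle is pinning down the genericity needed to conclude $V_{k-1}\cap U_{+}=0$ (equivalently $L\not\subseteq V_{k-1}$), since without it ${\cal E}_{+}(\lambda)$ would be entirely contained in $\hat{U}_{+}$ and the intersection would be neither transverse nor a single point; this is precisely where the ``general position'' clauses of Hypothesis~\ref{hyp:3} must be invoked in full strength, and likewise where one checks that $G_{*}$ avoids the singular locus of $\hat{U}_{+}$. Once that is secured, the remaining verifications are routine linear algebra on the tangent spaces of the Grassmannian.
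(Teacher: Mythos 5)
Your proposal is correct in substance but follows a genuinely different route from the paper. The paper's proof of Lemma~\ref{lem:trans} stays entirely inside Schubert calculus: it identifies $\hat{U}_{+}$ and ${\cal E}_{+}(\lambda)$ with Schubert cycles $\sigma_{a}(V)$, reads their dimensions off the cell description ($\hat{U}_{+}\cong\mathbb{C}^{k(N-k)-1}$, ${\cal E}_{+}(\lambda)\cong\mathbb{C}^{1}$), and gets the intersection number from the duality criterion $\sharp(\sigma_{a},\sigma_{b})=1$ exactly when $b_{k-i+1}=N-k-a_{i}$; no actual intersection point is exhibited, and the transversality assertion is inferred from this homological count. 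You instead locate the intersection set-theoretically ($L=\bar{E}_{+}(\lambda)\cap U_{+}$ is a line by Hypothesis~\ref{hyp:3}, and $G_{*}=V_{k-1}\oplus L$ is the unique candidate point), check that $G_{*}$ lies in the smooth locus of $\hat{U}_{+}$ (a point the paper glosses over, since the cycle is singular where $\dim(G\cap U_{+})\geq 2$ and the cell description really concerns the open stratum), and prove transversality by a tangent-space computation in $\Hom(G_{*},\mathbb{C}^{N}/G_{*})$, with Schubert calculus relegated to a cross-check. Your computation, including the isomorphism $V_{k+1}/G_{*}\cong\mathbb{C}^{N}/(G_{*}+U_{+})$, is sound, and your route buys more than the paper's: it shows that these particular representatives meet in exactly one reduced point, which is what is actually used downstream (Lemma~\ref{lem:6} and the covering-map argument), whereas the duality criterion by itself only computes the product of the homology classes.

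The one caveat is the genericity input $V_{k-1}\cap U_{+}=0$ (equivalently $L\not\subseteq V_{k-1}$), which you attribute to Hypothesis~\ref{hyp:3}. As written, that hypothesis asserts only $\bar{E}_{\pm}(\lambda)+U_{\pm}=\mathbb{C}^{N}$, i.e.\ general position of $V_{k+1}=\bar{E}_{+}(\lambda)$ with $U_{+}$, together with conditions on $\Phi(x,\mp\ell;\lambda)U_{\mp}$; it says nothing about the smaller flag member $V_{k-1}=E^{1}_{+}(\lambda)\cap E^{2}_{+}(\lambda)$. You correctly observe that without this condition one would have ${\cal E}_{+}(\lambda)\subset\hat{U}_{+}$ and the transversality statement of the lemma would fail, so the condition must either be added to (or read into) the hypothesis; the paper's own proof never confronts this because it only computes the homological intersection number. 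Making that assumption explicit is the only repair your argument needs.
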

\begin{proof}
For any flag $V=(V_{1} \subset V_{2} \subset \cdots \subset V_{N})$ in $\mathbb{C}^{N}$ and a sequence $a=(a_{1},\cdots,a_{k})$ where $N-k \geq a_ {1} \geq \cdots \geq a_{k} \geq 0$,
the Schubert cycle 
\begin{equation*}
\sigma_{a}(V) = \{ G \in G_{k}(\mathbb{C}^{N}) \mid \dim (V_{N-k+i-a_{i}}\cap G)\geq i \}
\end{equation*}
is homeomorphic to $\mathbb{C}^{k(N-k) - \sum a_{i}}$.
Therefore we have
\begin{equation*}
\hat{U}_{+} \cong \mathbb{C}^{k(N-k) - 1}, \ {\cal E}_{+}(\lambda) \cong \mathbb{C}^{k(N-k)-((k-1)(N-k)+(N-k-1))}.
\end{equation*}
Let $\sharp (\sigma_{a},\sigma_{b})$ be the intersection number of Schubert cycles $\sigma_{a}$ and $\sigma_{b}$.
For $\sigma_{a}$ and $\sigma_{b}$ where $\sum a_{i} + \sum b_{i} =k(N-k)$,
$b_{k-i+1}=N-k -a_{i}$ if and only if
$\sharp (\sigma_{a},\sigma_{b})=1$.
Consequently we have $\sharp (\hat{U}_{+},{\cal E}_{+}(\lambda))=1$.
\end{proof}

The eigenvalue problem of ${\cal T}_{\ell}$ can then be expressed as follows.
\begin{lem}\label{lem:1}
$\lambda$ is an eigenvalue of ${\cal T}_{\ell}$ if and only if $G(\ell;\lambda) \in \bar{U}_{+}$. 
\end{lem}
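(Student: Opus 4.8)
The plan is to reduce this statement to the first lemma of the section — the characterization of eigenvalues of ${\cal T}_{\ell}$ in terms of the linear flow, $\lambda \in \Sigma_{pt}({\cal T}_{\ell}) \iff \Phi(\ell,-\ell;\lambda)U_{-} \cap U_{+} \neq \{0\}$ — and then to translate that condition into the language of the induced Grassmannian flow \eqref{eq:2} and the Schubert cycle \eqref{eq:3}. (I read the $\bar{U}_{+}$ in the statement as $\hat{U}_{+}$ from \eqref{eq:3}.) The only point that is not pure bookkeeping is the identification of the solution $G(\cdot;\lambda)$ of \eqref{eq:2} through $U_{-}$ with the orbit of $U_{-}$ under the pushforward of $\Phi$.

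First I would record that, since $\Phi(x,y;\lambda) \in GL_{N}(\mathbb{C})$, it induces a diffeomorphism of $G_{i_{-}}(\mathbb{C}^{N})$ for each fixed $x,y,\lambda$, and that differentiating $x \mapsto \Phi(x,-\ell;\lambda)U_{-}$ (equivalently, applying $\Phi(x,-\ell;\lambda)$ to a fixed basis of $U_{-}$ and projectivizing) shows this curve is precisely the integral curve of \eqref{eq:2} with value $U_{-}$ at $x=-\ell$; this is the content of the remark that $G(x;\lambda)$ corresponds to $\mathbb{P}(\Phi(x,-\ell;\lambda)U_{-})$. In particular $G(\ell;\lambda) = \Phi(\ell,-\ell;\lambda)U_{-}$ as a point of $G_{i_{-}}(\mathbb{C}^{N})$, and invertibility of $\Phi$ guarantees $\dim \Phi(x,-\ell;\lambda)U_{-} = i_{-}$ along the whole orbit, so no collapse of dimension occurs.

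Combining this with the first lemma of the section, $\lambda$ is an eigenvalue of ${\cal T}_{\ell}$ if and only if $\Phi(\ell,-\ell;\lambda)U_{-} \cap U_{+} \neq \{0\}$, i.e. if and only if $\dim\bigl(G(\ell;\lambda) \cap U_{+}\bigr) \geq 1$. By the definition \eqref{eq:3} of $\hat{U}_{+}$, this last condition is exactly $G(\ell;\lambda) \in \hat{U}_{+}$, which yields the claimed equivalence.

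I do not expect a genuine obstacle here; the argument is essentially a restatement. The one place warranting care is the passage between the linear flow and the induced flow on $G_{i_{-}}(\mathbb{C}^{N})$ — verifying that the pushforward of $\Phi$ really solves \eqref{eq:2} and that the orbit of the $i_{-}$-plane $U_{-}$ stays an $i_{-}$-plane — but both are immediate consequences of $\Phi(x,y;\lambda)$ being invertible.
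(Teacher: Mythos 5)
Your argument is correct and follows essentially the same route as the paper: both reduce the statement to the preceding lemma ($\lambda$ is an eigenvalue iff $\Phi(\ell,-\ell;\lambda)U_{-}\cap U_{+}\neq\{0\}$) and then restate that condition in the Grassmannian/projective language, your reading of $\bar{U}_{+}$ as the Schubert cycle $\hat{U}_{+}$ of \eqref{eq:3} being the intended one. Your extra care in checking that $G(x;\lambda)$ really is $\Phi(x,-\ell;\lambda)U_{-}$ under the induced flow is a harmless elaboration of what the paper leaves implicit.
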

\begin{proof}
$\lambda$ is an eigenvalue of ${\cal T}_{\ell}$ if and only if $\Phi(\ell,-\ell;\lambda)U_{-}\cap U_{+} \neq \{ 0 \}$.
Therefore,
it is equivalent to the following projective condition
\begin{equation*}
\mathbb{P}(\Phi(\ell,-\ell;\lambda)U_{-})\cap \mathbb{P}(U_{+})\neq \emptyset.
\end{equation*}
This completes the proof.
\end{proof}

By Lemma \ref{lem:1},
the eigenvalue problem of ${\cal T}_{\ell}$ can be expressed as the existence problem of a connecting orbit from $U_{-}$ to $\hat{U}_{+}$ in $G(\ell;\lambda)$.
In other words,
it is the intersection problem between $G(\ell;\lambda)$ and $\hat{U}_{+}$.

To understand the accumulation of eigenvalues for the large bounded interval, 
we have to consider the asymptotic behavior of $G(\ell;\lambda)$ for $\lambda \in B(\lambda_{c};\delta)$ when $\ell \to \infty$. 
It arises as the increasing intersection number when $\lambda$ moves on the absolute spectrum $\Sigma_{abs}^{+}$.

To obtain topological information about the asymptotic behavior of $G(\ell;\lambda)$ and the intersection between $G(\ell;\lambda)$ and $\hat{U}_{+}$,
we use the Pl\"ucker embedding
\begin{equation}\label{eq:4}
\iota:G_{i_{+}}(\mathbb{C}^{N}) \to \mathbb{P}(\wedge^{i_{-}}\mathbb{C}^{N}) = \mathbb{CP}^{m-1},
\end{equation}
where $m =\begin{pmatrix}  N \\ i_{-} \end{pmatrix}$.
We then consider the system on the complex projective space $\mathbb{CP}^{m-1}$ induced by \eqref{eq:3}
\begin{equation}\label{eq:5}
P'={\cal P}(x,P;\lambda), \ P \in \mathbb{CP}^{m-1}.
\end{equation}
That is,
\eqref{eq:1} induces a linear system on the space of the $i_{-}$-form $\wedge^{i_{-}}\mathbb{C}^{N}$
\begin{equation}\label{eq:6}
Y^{(i_{-})'}=A^{(i_{-})}(x;\lambda)Y^{(i_{-})}, \ Y^{(i_{-})} \in \wedge^{i_{-}}\mathbb{C}^{N}.
\end{equation}
Then, \eqref{eq:5} is given by the projectivization of \eqref{eq:6}.
By Hypothesis \ref{hyp:1},
\eqref{eq:5} coincides with the autonomous system
\begin{equation}\label{eq:9}
P'={\cal P}_{+}(P;\lambda), \ P \in \mathbb{CP}^{m-1}
\end{equation}
if $\ell \geq \ell_{0}$,
which is induced by the linear autonomous system
\begin{equation}\label{eq:10}
Y^{(i_{-})'}=A_{+}^{(i_{-})}(\lambda)Y^{(i_{-})}, \ Y^{(i_{-})} \in \wedge^{i_{-}}\mathbb{C}^{N}.
\end{equation}
Note that $\wedge^{i_{-}}\mathbb{C}^{N} \cong \mathbb{C}^{m}$ and $A^{(i_{-})}=A \otimes I \otimes \cdots \otimes I + \cdots + I \otimes \cdots \otimes I \otimes A|_{\wedge^{i_{-}}\mathbb{C}^{N}}$.

Since $\Phi(x,-\ell;\lambda)U_{-}$ is an $i_{-}$-dimensional subspace in $\mathbb{C}^{N}$,
$\wedge^{i_{-}}\Phi(x,-\ell;\lambda)U_{-}$ is a $1$-dimensional subspace in $\wedge^{i_{-}} \mathbb{C}^{N}$.
Therefore,
the solution $G(x;\lambda)$ of \eqref{eq:3} corresponds uniquely to a solution $P(x;\lambda)=\iota(G(x;\lambda))$,
and corresponds to $\mathbb{P}(\wedge^{i_{-}} \Phi(x,-\ell;\lambda))U_{-}) \in \mathbb{CP}^{m-1}$.
Hence, the following proposition holds.
\begin{lem}\label{lem:2}
$\lambda$ is an eigenvalue of ${\cal T}_{\ell}$ if and only if $P(\ell;\lambda) \in \iota(\hat{U}_{+})$ in $\mathbb{CP}^{m-1}$.
\end{lem}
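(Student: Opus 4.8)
The plan is to deduce the statement from Lemma~\ref{lem:1} together with the two standard features of the Pl\"ucker embedding: that $\iota$ is injective, and that it intertwines the linear flow \eqref{eq:6} on $\wedge^{i_{-}}\mathbb{C}^{N}$ with the induced flow \eqref{eq:2} on $G_{i_{-}}(\mathbb{C}^{N})$. First I would record the functoriality of exterior powers: if $v_{1},\dots,v_{i_{-}}$ is a basis of $U_{-}$, then $\wedge^{i_{-}}U_{-}$ is the line $\mathbb{C}\,(v_{1}\wedge\cdots\wedge v_{i_{-}})$, and for every $M\in GL_{N}(\mathbb{C})$ one has $\wedge^{i_{-}}(MU_{-})=(\wedge^{i_{-}}M)\,\wedge^{i_{-}}U_{-}$. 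Applying this with $M=\Phi(x,-\ell;\lambda)$ identifies the $1$-dimensional subspace $\wedge^{i_{-}}\Phi(x,-\ell;\lambda)U_{-}$ of $\wedge^{i_{-}}\mathbb{C}^{N}$ with $(\wedge^{i_{-}}\Phi(x,-\ell;\lambda))\,\mathbb{C}(v_{1}\wedge\cdots\wedge v_{i_{-}})$.

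Next I would check that $\wedge^{i_{-}}\Phi(x,-\ell;\lambda)$ is precisely the fundamental solution matrix of \eqref{eq:6}: differentiating $x\mapsto\wedge^{i_{-}}\Phi(x,-\ell;\lambda)$ and applying the Leibniz rule produces exactly $A^{(i_{-})}(x;\lambda)\,\wedge^{i_{-}}\Phi(x,-\ell;\lambda)$, since $A^{(i_{-})}=A\otimes I\otimes\cdots+\cdots+I\otimes\cdots\otimes A$ restricted to $\wedge^{i_{-}}\mathbb{C}^{N}$ is by definition the derivation induced by $A$; and the initial condition matches, $\wedge^{i_{-}}I=I$ at $x=-\ell$. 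Consequently the projectivized solution $P(x;\lambda)$ of \eqref{eq:5} with $P(-\ell;\lambda)=\iota(U_{-})$ satisfies $P(x;\lambda)=\iota(G(x;\lambda))$ for all $x$, where $G(x;\lambda)=\mathbb{P}(\Phi(x,-\ell;\lambda)U_{-})$ is the solution of \eqref{eq:2} used in Lemma~\ref{lem:1}; in particular this holds at $x=\ell$.

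Finally I would invoke the injectivity of $\iota:G_{i_{-}}(\mathbb{C}^{N})\to\mathbb{CP}^{m-1}$ (a decomposable $i_{-}$-vector determines the subspace it spans). Together with the identity $P(\ell;\lambda)=\iota(G(\ell;\lambda))$, this yields $G(\ell;\lambda)\in\hat{U}_{+}$ if and only if $P(\ell;\lambda)=\iota(G(\ell;\lambda))\in\iota(\hat{U}_{+})$. Lemma~\ref{lem:1} then closes the argument: $\lambda$ is an eigenvalue of ${\cal T}_{\ell}$ iff $G(\ell;\lambda)\in\hat{U}_{+}$, hence iff $P(\ell;\lambda)\in\iota(\hat{U}_{+})$.

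The only step requiring genuine care — and the one I would write out in full — is the intertwining identity $P(x;\lambda)=\iota(G(x;\lambda))$, which amounts to verifying that the exterior-power functor is compatible with the flows, i.e.\ that the derivation $A^{(i_{-})}$ on $\wedge^{i_{-}}\mathbb{C}^{N}$ is the infinitesimal generator of the one-parameter family $\wedge^{i_{-}}\Phi(x,-\ell;\lambda)$. Everything else (injectivity of $\iota$, and the fact that the incidence condition $\dim(G\cap U_{+})\ge 1$ defining $\hat{U}_{+}$ in \eqref{eq:3} is preserved and reflected by $\iota$) is classical and immediate.
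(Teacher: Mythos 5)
Your proposal is correct and follows essentially the same route as the paper: the paper obtains Lemma~\ref{lem:2} from Lemma~\ref{lem:1} by noting that the solution $G(x;\lambda)$ corresponds uniquely under the (injective) Pl\"ucker embedding to $P(x;\lambda)=\iota(G(x;\lambda))=\mathbb{P}\bigl(\wedge^{i_{-}}\Phi(x,-\ell;\lambda)U_{-}\bigr)$. You merely make explicit the intertwining step (that $\wedge^{i_{-}}\Phi$ is the fundamental matrix of \eqref{eq:6}), which the paper leaves implicit.
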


The asymptotic behavior of $P(\ell;\lambda)$ when $\ell \to \infty$ is determined by the eigenvalues of $A^{(i_{-})}_{+}(\lambda)$.
It is well known that the eigenvalues $\nu_{+}^{j}(\lambda), \ j=1,\cdots,m$ of $A^{(i_{-})}_{+}(\lambda)$ are given by the $i_{-}$ sum of eigenvalues $\mu_{+}^{k}(\lambda), \ k=1,\cdots,N$ of $A_{+}(\lambda)$.
Hence,
\begin{eqnarray*}
&&\nu_{+}^{1}(\lambda)= \mu_{+}^{1}(\lambda) + \cdots +  \mu_{+}^{i_{-}-1}(\lambda) + \mu_{+}^{i_{-}}(\lambda),\\
&&\nu_{+}^{2}(\lambda)= \mu_{+}^{1}(\lambda) + \cdots +  \mu_{+}^{i_{-}-1}(\lambda) + \mu_{+}^{i_{-} +1}(\lambda)
\end{eqnarray*}
has the following two properties.
\begin{lem}\label{lem:3}
For any $\lambda_{*} \in S_{abs}^{D,+}$,
the eigenvalues $\nu_{+}^{j}(\lambda), \ j=1,\cdots,m$ of $A^{(i_{-})}_{+}(\lambda)$ satisfy
\begin{equation*}
\real \nu_{+}^{1}(\lambda_{*}) = \real \nu_{+}^{2}(\lambda_{*}) > \real \nu_{+}^{j}(\lambda_{*}) , \ j=3,\cdots,m,
\end{equation*}
and
\begin{equation*}
\frac{d}{d\lambda} \left( \nu_{+}^{1}(\lambda) - \nu_{+}^{2}(\lambda) \right) \Big{|}_{\lambda = \lambda_{*}} \neq 0.
\end{equation*}
\end{lem}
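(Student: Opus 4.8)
The plan is to reduce everything to the two named exterior eigenvalues $\nu_+^1(\lambda)$ and $\nu_+^2(\lambda)$ and translate the non-degeneracy conditions on $S_{abs}^{D,+}$ directly through the additive formula for exterior eigenvalues. First I would record the standard fact that if $A_+(\lambda)$ has eigenvalues $\mu_+^1(\lambda),\dots,\mu_+^N(\lambda)$ (listed with multiplicity), then the induced operator $A_+^{(i_-)}(\lambda)$ on $\wedge^{i_-}\mathbb{C}^N$ has eigenvalues given by the $\binom{N}{i_-}$ sums $\mu_+^{j_1}(\lambda)+\cdots+\mu_+^{j_{i_-}}(\lambda)$ over all strictly increasing multi-indices $j_1<\cdots<j_{i_-}$; this is the derivation-type identity $A^{(i_-)}=A\otimes I\otimes\cdots+\cdots+I\otimes\cdots\otimes A$ restricted to $\wedge^{i_-}\mathbb{C}^N$, already invoked in the excerpt. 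Consequently $\real\nu_+^{(J)}(\lambda)=\sum_{j\in J}\real\mu_+^j(\lambda)$ for each index set $J$, and to maximize this sum one takes $J=\{1,\dots,i_-\}$, giving $\nu_+^1=\mu_+^1+\cdots+\mu_+^{i_-}$; the second-largest real part is obtained by swapping $i_-$ for $i_-+1$, giving $\nu_+^2=\mu_+^1+\cdots+\mu_+^{i_--1}+\mu_+^{i_-+1}$, exactly as displayed just before the statement.

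Next I would verify the first inequality. For $\lambda_*\in S_{abs}^{D,+}$ the defining gap condition from the non-degenerate absolute spectrum (the displayed inequality \eqref{gen:2} specialized to the $+$ case, where the relevant index is $i_-$) says
\begin{equation*}
\real\mu_+^{i_--1}(\lambda_*)>\real\mu_+^{i_-}(\lambda_*)=\real\mu_+^{i_-+1}(\lambda_*)>\real\mu_+^{i_-+2}(\lambda_*).
\end{equation*}
From this, $\real\nu_+^1(\lambda_*)=\real\nu_+^2(\lambda_*)$ is immediate since the two sums differ only in the interchangeable $i_-$-th and $(i_-+1)$-th terms, which have equal real part. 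For any other index set $J$, I would argue that $\sum_{j\in J}\real\mu_+^j(\lambda_*)$ is strictly smaller: any $J\neq\{1,\dots,i_-\},\{1,\dots,i_--1,i_-+1\}$ either omits some index $\le i_--1$ or contains some index $\ge i_-+2$, and in either case the strict inequalities in the gap condition force a strict drop. A short case check (replacing an element $\le i_--1$ by a larger one costs strictly more than the $\mu_+^{i_-}$ vs.\ $\mu_+^{i_-+1}$ tie can compensate; including an index $\ge i_-+2$ in place of $i_-$ or $i_-+1$ likewise strictly decreases the real part) gives $\real\nu_+^1(\lambda_*)=\real\nu_+^2(\lambda_*)>\real\nu_+^j(\lambda_*)$ for $j=3,\dots,m$.

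For the derivative condition I would simply compute $\nu_+^1(\lambda)-\nu_+^2(\lambda)=\mu_+^{i_-}(\lambda)-\mu_+^{i_-+1}(\lambda)$, which holds identically in $\lambda$ near $\lambda_*$ once the branches of $\mu_+^j$ are chosen to vary analytically (the non-degeneracy hypothesis $\mu_+^{i_-}(\lambda_*)\neq\mu_+^{i_-+1}(\lambda_*)$ guarantees these two branches do not cross, hence stay analytic and well-separated from the rest by the strict gaps, so the labeling by real part is locally consistent with the analytic labeling). Then
\begin{equation*}
\frac{d}{d\lambda}\bigl(\nu_+^1(\lambda)-\nu_+^2(\lambda)\bigr)\Big|_{\lambda=\lambda_*}=\frac{d}{d\lambda}\bigl(\mu_+^{i_-}(\lambda)-\mu_+^{i_-+1}(\lambda)\bigr)\Big|_{\lambda=\lambda_*}\neq 0
\end{equation*}
by condition \eqref{gen:3}. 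The main obstacle I anticipate is the bookkeeping in the strict-inequality step for $j\ge 3$: one has to be careful that multiplicities (the $\mu_+^j$ are repeated according to multiplicity) do not create extra ties, but the non-degeneracy package is precisely designed so that only the $i_-$/$(i_-+1)$ pair is tied in real part and that pair is distinct as complex numbers, so no genuinely new maximizing index set appears. Everything else is routine once the additive exterior-eigenvalue formula is in hand.
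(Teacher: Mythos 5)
Your argument is correct and is exactly the route the paper intends: the paper states Lemma~\ref{lem:3} as an immediate consequence of the additive formula for the eigenvalues of $A_{+}^{(i_{-})}(\lambda)$ together with the non-degeneracy conditions \eqref{gen:2} and \eqref{gen:3}, which is precisely what you spell out (including the strict-inequality bookkeeping for $j\geq 3$ and the identity $\nu_{+}^{1}-\nu_{+}^{2}=\mu_{+}^{i_{-}}-\mu_{+}^{i_{-}+1}$). No discrepancy with the paper's approach.
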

\begin{lem}\label{lem:4}
Let $B(\lambda_{c};\delta)$ be the $\delta$-ball holding property \ref{property}.
Then $B(\lambda_{c};\delta) \setminus S_{abs}^{D,+} = B_{1} \cup B_{2}$ has similar property.
\begin{equation*}
\real \nu_{+}^{1}(\lambda) > \real \nu_{+}^{2}(\lambda) > \real \nu_{+}^{j}(\lambda) , \ j=3,\cdots,m,
\end{equation*}
for any $\lambda \in B_{1}$ and 
\begin{equation*}
\real \nu_{+}^{2}(\lambda) > \real \nu_{+}^{1}(\lambda) > \real \nu_{+}^{j}(\lambda) , \ j=3,\cdots,m,
\end{equation*}
for any $\lambda \in B_{2}$.
Moreover,
\begin{equation*}
\frac{d}{d\lambda}\left( \nu_{+}^{1}(\lambda) - \nu_{+}^{2}(\lambda) \right) \Big{|}_{\lambda = \lambda_{*}} \neq i r
\end{equation*}
for any $\lambda_{*} \in B(\lambda_{c};\delta) \setminus S_{abs}^{D,+}$ and $r \in \mathbb{R}$.
\end{lem}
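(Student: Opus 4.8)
The plan is to read off the behavior of $\nu_{+}^{1}$ and $\nu_{+}^{2}$ from that of the $\mu_{+}^{j}$ recorded in Property~\ref{property} and in the definition of the non-degenerate absolute spectrum, using the fact recalled before Lemma~\ref{lem:3} that the eigenvalues of $A_{+}^{(i_{-})}(\lambda)$ are exactly the $\binom{N}{i_{-}}$ sums $\sigma_{T}(\lambda):=\sum_{j\in T}\mu_{+}^{j}(\lambda)$ over the $i_{-}$-element subsets $T\subset\{1,\dots,N\}$. First I would fix $\lambda\in B_{1}$. Property~\ref{property}, together with the global labeling $\real\mu_{+}^{j}\ge\real\mu_{+}^{j+1}$ which is inherited on $B_{1}$ by continuation from $\lambda_{c}$, gives
\begin{equation*}
\real\mu_{+}^{1}(\lambda)\ge\cdots\ge\real\mu_{+}^{i_{-}-1}(\lambda)>\real\mu_{+}^{i_{-}}(\lambda)>\real\mu_{+}^{i_{-}+1}(\lambda)>\real\mu_{+}^{i_{-}+2}(\lambda)\ge\cdots\ge\real\mu_{+}^{N}(\lambda).
\end{equation*}
For any $T\ne T_{0}:=\{1,\dots,i_{-}\}$ the quantity $\real\sigma_{T_{0}}(\lambda)-\real\sigma_{T}(\lambda)$ equals $\sum_{a\in T_{0}\setminus T}\real\mu_{+}^{a}(\lambda)-\sum_{b\in T\setminus T_{0}}\real\mu_{+}^{b}(\lambda)$, and since every $a$ there is $\le i_{-}$ while every $b$ is $\ge i_{-}+1$, it is $\ge|T_{0}\setminus T|\cdot\bigl(\real\mu_{+}^{i_{-}}(\lambda)-\real\mu_{+}^{i_{-}+1}(\lambda)\bigr)>0$; hence $\sigma_{T_{0}}$ is the unique sum of largest real part, i.e.\ $\nu_{+}^{1}=\sigma_{T_{0}}$. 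Among the remaining subsets the deficit $\real\sigma_{T_{0}}-\real\sigma_{T}$ is minimized, and uniquely so, by $T_{1}:=\{1,\dots,i_{-}-1,i_{-}+1\}$: any subset differing from $T_{0}$ in two or more indices has deficit $\ge 2\bigl(\real\mu_{+}^{i_{-}}-\real\mu_{+}^{i_{-}+1}\bigr)$, while any single swap other than $i_{-}\leftrightarrow i_{-}+1$ involves an index $\le i_{-}-1$ or an index $\ge i_{-}+2$ and so, by the \emph{strict} inequalities in Property~\ref{property}, has deficit strictly larger than $\real\mu_{+}^{i_{-}}-\real\mu_{+}^{i_{-}+1}$. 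Thus $\nu_{+}^{2}=\sigma_{T_{1}}$ and $\real\nu_{+}^{1}(\lambda)>\real\nu_{+}^{2}(\lambda)>\real\nu_{+}^{j}(\lambda)$ for $j=3,\dots,m$.

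On $B_{2}$ the same bookkeeping applies with $\mu_{+}^{i_{-}}$ and $\mu_{+}^{i_{-}+1}$ interchanged in the ordering: the $i_{-}$ eigenvalues of largest real part are now those indexed by $T_{1}$, so $\sigma_{T_{1}}=\nu_{+}^{2}$ becomes the top sum, the cheapest swap $T_{1}\mapsto T_{0}$ produces $\sigma_{T_{0}}=\nu_{+}^{1}$ as the second, and one obtains $\real\nu_{+}^{2}(\lambda)>\real\nu_{+}^{1}(\lambda)>\real\nu_{+}^{j}(\lambda)$ for $j\ge 3$.

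The derivative statement then follows at once, because the common summands cancel and
\begin{equation*}
\nu_{+}^{1}(\lambda)-\nu_{+}^{2}(\lambda)=\mu_{+}^{i_{-}}(\lambda)-\mu_{+}^{i_{-}+1}(\lambda),
\end{equation*}
so $\frac{d}{d\lambda}\bigl(\nu_{+}^{1}-\nu_{+}^{2}\bigr)\big|_{\lambda_{*}}=\frac{d}{d\lambda}\bigl(\mu_{+}^{i_{-}}-\mu_{+}^{i_{-}+1}\bigr)\big|_{\lambda_{*}}$, which is $\ne ir$ for every $\lambda_{*}\in B(\lambda_{c};\delta)\setminus S_{abs}^{D,+}$ and every $r\in\mathbb{R}$ by the ``moreover'' clause of the definition of the non-degenerate absolute spectrum (after shrinking $\delta$ so that $B(\lambda_{c};\delta)\cap S_{abs}^{D}$ is just its $+$ branch). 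I expect the only step needing genuine care to be the combinatorial claim that $\sigma_{T_{0}}$ and $\sigma_{T_{1}}$ are precisely the two $i_{-}$-fold sums of largest real part; this is where the strict inequalities of Property~\ref{property} — as opposed to the merely non-strict global ordering — are essential, since they are what prevents a third subset from tying with $\sigma_{T_{1}}$ and thereby spoiling $\real\nu_{+}^{2}>\real\nu_{+}^{j}$.
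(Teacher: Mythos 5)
Your proposal is correct and follows exactly the route the paper intends: Lemma \ref{lem:4} is presented there as an immediate consequence of the fact that the $\nu_{+}^{j}$ are the $i_{-}$-fold sums of the $\mu_{+}^{k}$, combined with Property \ref{property} and the ``moreover'' clause in the definition of the non-degenerate absolute spectrum, and your argument simply fills in the combinatorial bookkeeping (identifying $\sigma_{T_{0}}$, $\sigma_{T_{1}}$ as the two largest sums and noting $\nu_{+}^{1}-\nu_{+}^{2}=\mu_{+}^{i_{-}}-\mu_{+}^{i_{-}+1}$) that the paper leaves implicit.
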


The strategy is to find eigenvalues in $B(\lambda_{c};\delta)$.
First, we change to appropriate coordinates.
\begin{lem}\label{lem:5}
If $B(\lambda_{c};\delta)$ is sufficiently small,
there is a coordinate change that is analytic in $\lambda$ such that $A_{+}^{(i_{-})}(\lambda)$ forms
\begin{equation*}
A_{+}^{(i_{-})}(\lambda) =
\begin{pmatrix}
{\cal A} & O_{2,m-2}\\
O_{m-2,2} & {\cal B}(\lambda)
\end{pmatrix}
\end{equation*}
where 
${\cal A}_{+}(\lambda) = \diag \{ \nu_{+}^{1}(\lambda) , \nu_{+}^{2}(\lambda) \}$,
${\cal B}_{+}(\lambda)$ is a square matrix of dimension $m-2$,
and $O_{i,j}$ is the $i \times j$ zero matrix.
\end{lem}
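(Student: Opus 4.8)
The plan is to apply analytic perturbation theory to the analytic family $A_{+}^{(i_{-})}(\lambda)$, exploiting the spectral gap supplied by Lemma~\ref{lem:3}. At the center $\lambda_{c}\in S_{abs}^{D,+}$, Lemma~\ref{lem:3} gives $\real\nu_{+}^{1}(\lambda_{c})=\real\nu_{+}^{2}(\lambda_{c})>\real\nu_{+}^{j}(\lambda_{c})$ for $j=3,\dots,m$, so we may fix a rectifiable Jordan curve $\Gamma$ enclosing $\nu_{+}^{1}(\lambda_{c}),\nu_{+}^{2}(\lambda_{c})$ and no other eigenvalue of $A_{+}^{(i_{-})}(\lambda_{c})$. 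Since the spectrum of $A_{+}^{(i_{-})}(\lambda)$ depends continuously on $\lambda$, after shrinking $\delta$ the curve $\Gamma$ encloses exactly $\{\nu_{+}^{1}(\lambda),\nu_{+}^{2}(\lambda)\}$ for every $\lambda\in B(\lambda_{c};\delta)$, while the remaining eigenvalues stay outside.

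Next I would pass to the block form via the Riesz spectral projection
\[
P(\lambda)=\frac{1}{2\pi i}\oint_{\Gamma}\bigl(zI-A_{+}^{(i_{-})}(\lambda)\bigr)^{-1}\,dz,
\]
which is a projection of rank $2$ onto the $A_{+}^{(i_{-})}(\lambda)$-invariant subspace associated with $\{\nu_{+}^{1}(\lambda),\nu_{+}^{2}(\lambda)\}$, and which is analytic in $\lambda$ because the resolvent is jointly analytic in $(z,\lambda)$ on $\Gamma\times B(\lambda_{c};\delta)$. Forming the Kato--Sz.-Nagy transformation function
\[
U(\lambda)=\bigl(P(\lambda)P(\lambda_{c})+(I-P(\lambda))(I-P(\lambda_{c}))\bigr)\bigl(I-(P(\lambda)-P(\lambda_{c}))^{2}\bigr)^{-1/2},
\]
which is analytic and invertible for $\lambda$ near $\lambda_{c}$ (a further shrinking of $\delta$) and satisfies $U(\lambda)P(\lambda_{c})U(\lambda)^{-1}=P(\lambda)$, the conjugate $U(\lambda)^{-1}A_{+}^{(i_{-})}(\lambda)U(\lambda)$ commutes with the fixed projection $P(\lambda_{c})$. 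Choosing once and for all a basis of $\mathbb{C}^{m}$ adapted to $\Im P(\lambda_{c})\oplus\Ker P(\lambda_{c})$, this conjugate is block-diagonal with an analytic $2\times 2$ block $\widetilde{\cal A}(\lambda)$ of spectrum $\{\nu_{+}^{1}(\lambda),\nu_{+}^{2}(\lambda)\}$ and an analytic $(m-2)\times(m-2)$ block ${\cal B}_{+}(\lambda)$.

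It then remains to diagonalize the $2\times 2$ block analytically. Here I use that $\nu_{+}^{1}-\nu_{+}^{2}=\mu_{+}^{i_{-}}-\mu_{+}^{i_{-}+1}$ together with $\mu_{+}^{i_{-}}(\lambda_{c})\neq\mu_{+}^{i_{-}+1}(\lambda_{c})$ (non-degeneracy of $\lambda_{c}\in S_{abs}^{D,+}$): the discriminant $(\tr\widetilde{\cal A}(\lambda))^{2}-4\det\widetilde{\cal A}(\lambda)=(\nu_{+}^{1}(\lambda)-\nu_{+}^{2}(\lambda))^{2}$ is analytic in $\lambda$ and nonzero at $\lambda_{c}$, hence nonzero on a still smaller $B(\lambda_{c};\delta)$. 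Consequently $\widetilde{\cal A}(\lambda)$ has two distinct eigenvalues there, $\nu_{+}^{1}(\lambda),\nu_{+}^{2}(\lambda)$ are analytic, and $\widetilde{\cal A}(\lambda)$ is analytically diagonalizable — e.g.\ by applying the Riesz-projection argument again to $\widetilde{\cal A}(\lambda)$ with small circles around $\nu_{+}^{1}(\lambda)$ and $\nu_{+}^{2}(\lambda)$, producing an analytic invertible $V(\lambda)$ with $V(\lambda)^{-1}\widetilde{\cal A}(\lambda)V(\lambda)=\diag\{\nu_{+}^{1}(\lambda),\nu_{+}^{2}(\lambda)\}$. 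Composing $U(\lambda)$ with $\diag\{V(\lambda),I_{m-2}\}$ gives the asserted analytic coordinate change on the shrunken ball.

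\textbf{Main obstacle.} The one genuinely delicate point is that the eigenvalues $\mu_{+}^{j}(\lambda)$ of $A_{+}(\lambda)$ are labelled by real part and cross precisely on $\Sigma_{abs}^{D,+}$, so they are not individually analytic there; accordingly $\nu_{+}^{1},\nu_{+}^{2}$ must not be treated as a priori analytic, and one is forced to argue through the unordered pair $\{\nu_{+}^{1},\nu_{+}^{2}\}$ and its symmetric functions (the trace and determinant of the analytic block $\widetilde{\cal A}$), as above. The rest is the standard Kato machinery; the only bookkeeping is to shrink $\delta$ finitely many times so that $\Gamma$ separates the spectrum uniformly, $U(\lambda)$ stays invertible, and the pair $\{\nu_{+}^{1},\nu_{+}^{2}\}$ remains simple on all of $B(\lambda_{c};\delta)$.
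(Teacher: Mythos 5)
Your proof is correct and follows essentially the same route as the paper: the paper's one-line proof simply invokes the decomposition $\mathbb{C}^{m}=W_{1}(\lambda)\oplus W_{2}(\lambda)\oplus W_{s}(\lambda)$ into the eigenspaces of $\nu_{+}^{1},\nu_{+}^{2}$ and the complementary spectral subspace, and your Riesz-projection/Kato transformation-function argument is precisely the standard justification that this decomposition, and hence the coordinate change, can be chosen analytically in $\lambda$. Your discriminant argument for the $2\times 2$ block likewise just makes explicit the simplicity condition $\mu_{+}^{i_{-}}(\lambda_{c})\neq\mu_{+}^{i_{-}+1}(\lambda_{c})$ that the paper uses implicitly.
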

\begin{proof}
This is easily seen from the fact that $\mathbb{C}^{m} = W_{1}(\lambda) \oplus W_{2}(\lambda) \oplus W_{s}(\lambda)$,
where $W_{j}(\lambda)$ is an eigenspace of $\nu_{+}^{j}(\lambda), \ j=1,2$,
and $W_{s}(\lambda)$ is the generalized eigenspace associated with $\nu_{+}^{j}(\lambda), \ j=3,\cdots,m$.
\end{proof}

For the coordinate $(Z_{1},\cdots,Z_{m})$ satisfying Lemma \ref{lem:5},
\eqref{eq:10} is written as
\begin{equation}\label{eq:7}
\left\{
\begin{split}
&Z_{u}' ={\cal A}_{+}(\lambda)Z_{u},\\
&Z_{s}'={\cal B}_{+}(\lambda)Z_{s},
\end{split}
\right.
\end{equation}
where $Z_{u}=(Z_{1},Z_{2})$ and $Z_{s}=(Z_{3},\cdots,Z_{m})$.

The following describes the extension and generalization of Nii's methods in \cite{SN1}.
Set $W_{u}(\lambda)=W_{1}(\lambda) + W_{2}(\lambda) = \wedge^{k}E^{1}_{+}(\lambda) + \wedge^{k}E^{2}_{+}(\lambda)$ and ${\cal SU}_{+} = \mathbb{P}(W_{u}(\lambda))$.
${\cal SU}_{+}(\lambda) = \{ [Z_{1}:Z_{2}:0:\cdots:0] \mid (Z_{1},Z_{2}) \neq 0 \}$ is identified with $\mathbb{CP}^{1}$,
and the following holds by Hypothesis \ref{hyp:3},
Lemma \ref{lem:trans}, and the injectivity of the Pl\"ucker embedding.
\begin{lem}\label{lem:6}
$\iota (\hat{U}_{+})$ and ${\cal SU}_{+}(\lambda)$ intersect transversally in the embedding manifold $\iota(G_{i_{-}}(\mathbb{C}^{N}))$.
\end{lem}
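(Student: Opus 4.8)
The plan is to reduce the transversality of $\iota(\hat{U}_{+})$ and ${\cal SU}_{+}(\lambda)$ inside $\iota(G_{i_{-}}(\mathbb{C}^N))$ to the transversality of $\hat{U}_{+}$ and a suitable one-dimensional Schubert cycle downstairs in the Grassmannian, where Lemma \ref{lem:trans} already does the work. First I would observe that the Plücker embedding $\iota$ is a smooth (indeed holomorphic) embedding, so it suffices to identify ${\cal SU}_{+}(\lambda) = \mathbb{P}(W_u(\lambda))$ with $\iota$ of a submanifold of $G_{i_{-}}(\mathbb{C}^N)$ and then argue upstairs. The key identification is that $W_u(\lambda) = \wedge^{i_{-}}E^1_{+}(\lambda) + \wedge^{i_{-}}E^2_{+}(\lambda)$ is exactly the span of the decomposable $i_{-}$-vectors obtained from the pencil of subspaces lying between $V_{i_{-}-1} = E^1_{+}(\lambda)\cap E^2_{+}(\lambda)$ and $V_{i_{-}+1} = \bar{E}_{+}(\lambda)$; that is, a line $[\xi]\in\mathbb{P}(W_u(\lambda))$ is decomposable if and only if $\xi$ corresponds to some $G$ with $V_{i_{-}-1}\subset G\subset \bar{E}_{+}(\lambda)$. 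Hence ${\cal SU}_{+}(\lambda)\cap \iota(G_{i_{-}}(\mathbb{C}^N)) = \iota({\cal E}_{+}(\lambda))$, where ${\cal E}_{+}(\lambda)$ is the one-dimensional Schubert cycle of \eqref{eq:8}. This reduces the statement to: $\iota(\hat{U}_{+})$ and $\iota({\cal E}_{+}(\lambda))$ meet transversally in $\iota(G_{i_{-}}(\mathbb{C}^N))$, which, since $\iota$ is an embedding, is equivalent to $\hat{U}_{+}$ and ${\cal E}_{+}(\lambda)$ meeting transversally in $G_{i_{-}}(\mathbb{C}^N)$ — and that is precisely Lemma \ref{lem:trans}.

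The remaining point is to verify that ${\cal SU}_{+}(\lambda)$ does not acquire extra intersection with $\iota(\hat{U}_{+})$ at a point of $\mathbb{P}(W_u(\lambda))$ that fails to be decomposable. If $i_{-}=1$ every line is decomposable and $\mathbb{P}(W_u(\lambda))=\iota({\cal E}_{+}(\lambda))$ outright, so nothing is needed; for $i_{-}\geq 2$ I would use the fact that $\iota(\hat{U}_{+})$ is contained in $\iota(G_{i_{-}}(\mathbb{C}^N))$, so any intersection point of ${\cal SU}_{+}(\lambda)$ with $\iota(\hat{U}_{+})$ is automatically a decomposable vector, hence lies in $\iota({\cal E}_{+}(\lambda))$; thus the intersection $\iota(\hat{U}_{+})\cap{\cal SU}_{+}(\lambda)$ is exactly $\iota(\hat{U}_{+})\cap\iota({\cal E}_{+}(\lambda))$, a single transverse point. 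Here Hypothesis \ref{hyp:3} is what guarantees that this one intersection point actually exists and that the relevant tangent spaces are complementary: the general-position assumption $\bar{E}_{+}(\lambda)+U_{+}=\mathbb{C}^N$ together with $\dim(\bar{E}_{+}(\lambda)\cap U_{+})=1$ (the Remark after Hypothesis \ref{hyp:3}) forces $\hat{U}_{+}\cap{\cal E}_{+}(\lambda)$ to be the single point $G_0$ with $V_{i_{-}-1}\subset G_0\subset\bar{E}_{+}(\lambda)$ and $\dim(G_0\cap U_+)=1$, and the transversality of tangent spaces there is the content of Lemma \ref{lem:trans}.

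I expect the main obstacle to be the bookkeeping in the identification ${\cal SU}_{+}(\lambda)\cap\iota(G_{i_{-}}(\mathbb{C}^N))=\iota({\cal E}_{+}(\lambda))$: one must check carefully that $W_u(\lambda)=\wedge^{i_{-}}E^1_{+}(\lambda)+\wedge^{i_{-}}E^2_{+}(\lambda)$ coincides with $\spa\{\wedge^{i_{-}}G : V_{i_{-}-1}\subset G\subset \bar E_+(\lambda)\}$, i.e. that the span of the two extreme decomposable lines is exactly the span of the whole pencil of decomposable lines in between. This follows from choosing a basis $e_1,\dots,e_{i_{-}-1}$ of $V_{i_{-}-1}$ and extending by $e_{i_{-}},e_{i_{-}+1}$ to a basis of $\bar E_+(\lambda)$: then $\wedge^{i_{-}}E^1_{+}(\lambda)$ and $\wedge^{i_{-}}E^2_{+}(\lambda)$ are spanned by $e_1\wedge\cdots\wedge e_{i_{-}-1}\wedge e_{i_{-}}$ and $e_1\wedge\cdots\wedge e_{i_{-}-1}\wedge e_{i_{-}+1}$ respectively, these two vectors span $W_u(\lambda)$, and every decomposable line in the pencil is $e_1\wedge\cdots\wedge e_{i_{-}-1}\wedge(\alpha e_{i_{-}}+\beta e_{i_{-}+1})$, which lies in $W_u(\lambda)$ and conversely exhausts the decomposable vectors of $W_u(\lambda)$ — a Plücker-relation computation in the $2$-dimensional residual space. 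Once this is in place the rest is immediate from Lemma \ref{lem:trans} and the injectivity of $\iota$.
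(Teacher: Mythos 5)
Your proposal is correct and takes essentially the same route as the paper: identify ${\cal SU}_{+}(\lambda)$ with $\iota({\cal E}_{+}(\lambda))$ and then transfer the transversality and unit intersection number of Lemma \ref{lem:trans} through the Pl\"ucker embedding, with Hypothesis \ref{hyp:3} guaranteeing the flags are in general position. Your explicit pencil computation (every element of $W_{u}(\lambda)$ is of the form $e_{1}\wedge\cdots\wedge e_{i_{-}-1}\wedge(\alpha e_{i_{-}}+\beta e_{i_{-}+1})$, hence decomposable) in fact supplies the detail that the paper's terse wedge-vanishing argument leaves implicit, and it also makes your second paragraph's worry about non-decomposable points moot.
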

\begin{proof}
We show that ${\cal SU}_{+}(\lambda)=\iota ({\cal E}_{+}(\lambda))$.
For any $G \in {\cal SU}_{+}(\lambda)$ and $F \in \iota ({\cal E}_{+}(\lambda))$,
$F \wedge G =0$ because $\mathbb{P}(\wedge^{k}E^{1}_{+}(\lambda)), \mathbb{P}(\wedge^{k}E^{2}_{+}(\lambda))  \in {\cal SU}_{+}(\lambda) \cap \iota ({\cal E}_{+}(\lambda))$.
Therefore,
by the injectivity of the Pl\"ucker embedding and $\sharp(\hat{U}_{+}, {\cal E}_{+}(\lambda))=1$,
$\iota (\hat{U}_{+})$ and ${\cal SU}_{+}(\lambda)$ intersect transversally.
\end{proof}

To find intersections of $P(\ell;\lambda)$ and $\iota(\hat{U}_{+})$,
we consider the set $\Gamma := \{ P(\ell;\lambda) | \lambda \in B(\lambda_{c};\delta) \}$ and the projection
\begin{equation}
\pr:\mathbb{CP}^{m-1}\setminus \{ [0:0:Z_{3}:\cdots:Z_{m}] \mid  (Z_{3},\cdots,Z_{m}) \neq 0\} \to {\cal SU}_{+}(\lambda).
\end{equation}

We consider the asymptotic behavior of $P(\ell;\lambda)$ for $B(\lambda_{c};\delta)$.
Define the $\omega$-limit set of $P(\ell;\lambda)$ depending on $\lambda \in B(\lambda_{c};\delta)$ by
\begin{equation}
\Omega(\lambda):= \bigcap_{\bar{\ell} \in I_{\ell}} \clos \{ P(\ell;\lambda) \mid \ell > \bar{\ell}  \  \},
\end{equation}
where we then consider $\bar{\ell} < \ell <  \infty $.
\begin{lem}\label{lem:9}
$P(\ell;\lambda)$ satisfies the following asymptotic conditions.
\begin{eqnarray*}
\Omega(\lambda) =\left\{
\begin{array}{l}
 P_{n}=\mathbb{P}(\wedge^{i_{-}}E^{1}_{+}(\lambda)) \text{ for any } \lambda \in B_{1},\\ 
 P_{s}=\mathbb{P}(\wedge^{i_{-}}E^{2}_{+}(\lambda)) \text{ for any } \lambda \in B_{2}. 
 \end{array}\right.
\end{eqnarray*}
That is,
$P_{n}$ is an attracor and $P_{s}$ is a dual repeller in ${\cal SU}_{+}(\lambda)$ for any $\lambda \in B_{1}$,
and $P_{s}$ is an attracor and $P_{n}$ is a dual repeller in ${\cal SU}_{+}(\lambda)$ for any $\lambda \in B_{2}$.
Furthermore,
there is a periodic orbit $\gamma$ in ${\cal SU}_{+}(\lambda)$ for $Z_{u}' ={\cal A}_{+}(\lambda)Z_{u}$ such that
\begin{equation*}
\Omega(\lambda) = \gamma \ \text{ for any } \lambda \in S_{abs}^{D,+}.
\end{equation*}
\end{lem}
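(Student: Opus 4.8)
The plan is to reduce the whole statement to the autonomous dynamics of \eqref{eq:9} on the invariant copy of $\mathbb{CP}^{1}$ given by ${\cal SU}_{+}(\lambda)$, written in the block form \eqref{eq:7} of Lemma \ref{lem:5}. For $\ell>\ell_{0}$ equation \eqref{eq:5} is autonomous on $[\ell_{0},\ell]$ by Hypothesis \ref{hyp:1}, so $P(\ell;\lambda)=\Psi_{+}^{\,\ell-\ell_{0}}(\lambda)\bigl(P(\ell_{0};\lambda)\bigr)$, where $\Psi_{+}^{t}(\lambda)$ is the time-$t$ map of \eqref{eq:9}, i.e.\ the projectivization of $e^{tA_{+}^{(i_{-})}(\lambda)}$; in the coordinates $(Z_{1},Z_{2},Z_{s})$ of Lemma \ref{lem:5} it acts by $[Z_{1}:Z_{2}:Z_{s}]\mapsto[e^{\nu_{+}^{1}t}Z_{1}:e^{\nu_{+}^{2}t}Z_{2}:e^{t{\cal B}_{+}(\lambda)}Z_{s}]$. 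Since $P(\ell_{0};\lambda)=\mathbb{P}\bigl(\wedge^{i_{-}}\Phi(\ell_{0},-\ell_{0};\lambda)\,e^{(\ell-\ell_{0})A_{-}(\lambda)}U_{-}\bigr)$ and the forward flow $e^{tA_{-}(\lambda)}U_{-}$ converges in $G_{i_{-}}(\mathbb{C}^{N})$ as $t\to\infty$ to a limiting $A_{-}(\lambda)$-invariant subspace $U_{-}^{\infty}(\lambda)$ (using the genericity of $U_{-}$ relative to the spectral flag of $A_{-}(\lambda)$ built into Hypotheses \ref{hyp:2}--\ref{hyp:3}, or passing to a subsequence), one gets $P(\ell_{0};\lambda)\to Q_{\infty}(\lambda):=\mathbb{P}\bigl(\wedge^{i_{-}}\Phi(\ell_{0},-\ell_{0};\lambda)U_{-}^{\infty}(\lambda)\bigr)$. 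I would then show $\Omega(\lambda)=\omega_{\Psi_{+}}\bigl(Q_{\infty}(\lambda)\bigr)$, the ordinary $\omega$-limit set of $Q_{\infty}(\lambda)$ under $\Psi_{+}$; once the latter is identified below as a single attracting fixed point or a single attracting periodic orbit, this equality is a routine limit-set argument using that $P(\ell_{0};\lambda)$ eventually lies in a fixed compact subset of the relevant basin.

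The core is the autonomous dynamics on $\mathbb{CP}^{m-1}$ described by \eqref{eq:7}. By Lemmas \ref{lem:3} and \ref{lem:4}, $\real\nu_{+}^{j}(\lambda)<\min\{\real\nu_{+}^{1}(\lambda),\real\nu_{+}^{2}(\lambda)\}$ for all $j\ge 3$ and all $\lambda\in B(\lambda_{c};\delta)$, so whenever the $Z_{u}=(Z_{1},Z_{2})$ component of a point is nonzero, $\|e^{t{\cal B}_{+}(\lambda)}Z_{s}\|/\|(e^{\nu_{+}^{1}t}Z_{1},e^{\nu_{+}^{2}t}Z_{2})\|\to 0$, uniformly on compact subsets of the domain of $\pr$; hence $\Psi_{+}^{t}$ drives such points toward ${\cal SU}_{+}(\lambda)=\{Z_{s}=0\}\cong\mathbb{CP}^{1}$, and $\pr\circ\Psi_{+}^{t}=(\Psi_{+}^{t}|_{{\cal SU}_{+}})\circ\pr$ on that domain. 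On ${\cal SU}_{+}(\lambda)$, in the affine coordinate $w=Z_{2}/Z_{1}$, the flow $Z_{u}'={\cal A}_{+}(\lambda)Z_{u}$ reads $w(t)=e^{(\nu_{+}^{2}(\lambda)-\nu_{+}^{1}(\lambda))t}w(0)$, with equilibria $P_{n}=\mathbb{P}(W_{1}(\lambda))=\mathbb{P}(\wedge^{i_{-}}E^{1}_{+}(\lambda))$ at $w=0$ and $P_{s}=\mathbb{P}(W_{2}(\lambda))=\mathbb{P}(\wedge^{i_{-}}E^{2}_{+}(\lambda))$ at $w=\infty$. For $\lambda\in B_{1}$ we have $\real(\nu_{+}^{2}-\nu_{+}^{1})<0$, so $|w(t)|\to 0$ and $P_{n}$ is an attractor with dual repeller $P_{s}$ in ${\cal SU}_{+}(\lambda)$; for $\lambda\in B_{2}$ the inequality reverses and the roles are exchanged; for $\lambda\in S_{abs}^{D,+}$, Lemma \ref{lem:3} gives $\nu_{+}^{1}(\lambda)-\nu_{+}^{2}(\lambda)=i\beta$ with $\beta\ne 0$, so $|w(t)|\equiv|w(0)|$ and every orbit through a point with $Z_{1}Z_{2}\ne 0$ is a periodic orbit of period $2\pi/|\beta|$ sweeping out a circle $\{|w|=\mathrm{const}\}$.

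It then remains to locate $Q_{\infty}(\lambda)$ inside the appropriate region: for $\lambda\in B_{1}$ its $\nu_{+}^{1}$-component must be nonzero so that $\omega_{\Psi_{+}}(Q_{\infty})=P_{n}$; for $\lambda\in B_{2}$ its $\nu_{+}^{2}$-component must be nonzero so that $\omega_{\Psi_{+}}(Q_{\infty})=P_{s}$; and for $\lambda\in S_{abs}^{D,+}$ both components must be nonzero, i.e.\ $\pr(Q_{\infty})\ne P_{n},P_{s}$, so that $\omega_{\Psi_{+}}(Q_{\infty})=\gamma$, the periodic orbit through $\pr(Q_{\infty})$. In each case this is exactly a transversality of the limiting subspace $\Phi(\ell_{0},-\ell_{0};\lambda)U_{-}^{\infty}(\lambda)$ (equivalently $\lim_{\ell\to\infty}\Phi(x,-\ell;\lambda)U_{-}$ for fixed $x>\ell_{0}$) to the $A_{+}(\lambda)$-invariant complement of $E^{1}_{+}(\lambda)$, resp.\ of $E^{2}_{+}(\lambda)$, resp.\ of both; since $\bar{E}_{+}(\lambda)$, $E^{1}_{+}(\lambda)$ and $E^{2}_{+}(\lambda)$ are invariant under the autonomous flow $e^{tA_{+}(\lambda)}$, this is the $\ell\to\infty$ limit of the non-containment ``$\Phi(x,-\ell;\lambda)U_{-}$ is not contained in $\bar{E}_{+}(\lambda)$'' together with $\bar{E}_{\pm}(\lambda)+U_{\pm}=\mathbb{C}^{N}$ in Hypothesis \ref{hyp:3}. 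Combining the three regimes with the identification $\Omega(\lambda)=\omega_{\Psi_{+}}(Q_{\infty}(\lambda))$ yields the asserted description.

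The step I expect to be the main obstacle is this last one: the transversality in Hypothesis \ref{hyp:3} is an open condition imposed along the $\ell$-dependent orbits $\Phi(x,-\ell;\lambda)U_{-}$, while what is actually needed is its persistence in the limit $\ell\to\infty$, so that $Q_{\infty}(\lambda)$ lies in the domain of $\pr$ with $\pr(Q_{\infty}(\lambda))$ away from the equilibria $P_{n},P_{s}$; the invariance of $\bar{E}_{+}(\lambda)$, $E^{1}_{+}(\lambda)$, $E^{2}_{+}(\lambda)$ under the $A_{+}(\lambda)$-flow is the mechanism that lets one propagate the non-containment to the limit. A secondary technical point is the existence of the forward limit $U_{-}^{\infty}(\lambda)$ of $e^{tA_{-}(\lambda)}U_{-}$, which requires genericity of $U_{-}$ with respect to the spectral flag of $A_{-}(\lambda)$ and can otherwise be dealt with on a dense set of $\lambda$ or along subsequences.
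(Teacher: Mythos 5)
Your core argument coincides with the paper's own proof: for $x\ge\ell_{0}$ the projectivized system \eqref{eq:5} reduces to the autonomous flow \eqref{eq:9}, which in the coordinates of Lemma \ref{lem:5} leaves ${\cal SU}_{+}(\lambda)=\{Z_{s}=0\}\cong\mathbb{CP}^{1}$ invariant and attracting because $\real\nu_{+}^{1},\real\nu_{+}^{2}>\real\nu_{+}^{j}$ for $j\ge 3$ on all of $B(\lambda_{c};\delta)$; in the affine coordinate $Z_{2}/Z_{1}$ the restricted flow is linear with rate $\nu_{+}^{2}-\nu_{+}^{1}$, so the sign of $\real(\nu_{+}^{1}-\nu_{+}^{2})$ gives the attractor/repeller pair $(P_{n},P_{s})$ on $B_{1}$, the reversed pair on $B_{2}$, and pure rotation (a circle of periodic orbits) when $\nu_{+}^{1}-\nu_{+}^{2}\in i\mathbb{R}\setminus\{0\}$ on $S_{abs}^{D,+}$. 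This is exactly the published proof, which consists of the attraction of ${\cal SU}_{+}(\lambda)$ via the ratios $Z_{j}/Z_{i}$ and the computations in the inhomogeneous coordinates near $P_{n}$, $P_{s}$ and on ${\cal SU}_{+}(\lambda)\setminus\{P_{n},P_{s}\}$, and stops there.

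Where you go beyond the paper is in tracking the $\ell$-dependence of the data at $x=\ell_{0}$ (the limit $Q_{\infty}(\lambda)$ of $\Phi(\ell_{0},-\ell_{0};\lambda)e^{(\ell-\ell_{0})A_{-}(\lambda)}U_{-}$) and in trying to place $Q_{\infty}(\lambda)$ in the correct basin; the paper is silent on both points, so nothing there can be quoted, and the bridge you propose does not work as stated. What you actually need is nonvanishing of the $Z_{1}$ (resp.\ $Z_{2}$) Pl\"ucker coordinate, i.e.\ transversality of the evolved $i_{-}$-plane to the $A_{+}(\lambda)$-invariant complement of $E^{1}_{+}(\lambda)$ (resp.\ $E^{2}_{+}(\lambda)$), whereas the clause of Hypothesis \ref{hyp:3} you invoke only forbids containment of that $i_{-}$-plane in the $(i_{-}+1)$-dimensional space $\bar{E}_{+}(\lambda)$, and the condition $\bar{E}_{\pm}(\lambda)+U_{\pm}=\mathbb{C}^{N}$ constrains the boundary spaces, not the evolved plane. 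These are genuinely weaker: an $i_{-}$-plane lying in the span of the generalized eigenspaces of $\mu_{+}^{i_{-}+1},\dots,\mu_{+}^{N}$ (possible since $i_{+}\ge i_{-}$) satisfies the non-containment, yet has $Z_{1}=Z_{2}=0$, remains in $\{Z_{u}=0\}$ for all time, and never approaches $P_{n}$, $P_{s}$ or the rotation circle. So either strengthen the transversality assumption to exactly the nonvanishing of these two Pl\"ucker coordinates along $\Phi(x,-\ell;\lambda)U_{-}$ (uniformly as $\ell\to\infty$, which also subsumes your secondary worry about the existence of $U_{-}^{\infty}(\lambda)$), or drop this extra layer and argue directly with the ratios $Z_{j}/Z_{i}$ along the actual solution as the paper does; with that repair, the rest of your argument reproduces the lemma.
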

\begin{proof}
First,
we show that ${\cal SU}_{+}(\lambda)=\mathbb{P}(W_{s}(\lambda))$ is an attracting invariant space for \eqref{eq:9} with any $\lambda \in B(\lambda_{c};\delta)$.
For any $\lambda \in B(\lambda_{c};\delta)$,
the eigenvalues $\nu_{+}^{j}(\lambda)$ of $A_{+}^{i_{-}(\lambda)}$ satisfy
\begin{equation*}
\real \nu_{+}^{i}(\lambda) > \real \nu_{+}^{j}(\lambda), \ i=1,2, \text{ and } j=3,\cdots,m,
\end{equation*} 
and hence,
the solutions $Z_{s}(x;\lambda)=(Z_{1}(x;\lambda),Z_{2}(x;\lambda))$ and $Z_{u}(x;\lambda)=(Z_{3}(x;\lambda),\cdots,Z_{m}(x;\lambda))$ for \eqref{eq:7} satisfy
\begin{equation*}
 \frac{Z_{j}(\ell;\lambda)}{Z_{i}(\ell;\lambda)} \to 0 \ \text{ as } \ell \to \infty
\end{equation*}
for any $i=1,2$ and $j=3,\cdots,m$.
This implies that ${\cal SU}_{+}(\lambda)=\mathbb{P}(W_{s}(\lambda))$ is an attracting invariant space for \eqref{eq:9}.

We take the inhomogeneous coordinate $(\frac{Z_{2}}{Z_{1}} \cdots , \frac{Z_{m}}{Z_{1}})$ in a neighborhood of $P_{n}=[1:0:\cdots:0] \in \mathbb{CP}^{m-1}$,
and set $\alpha(\lambda) = \real ( \nu_{+}^{1}(\lambda) - \nu_{+}^{2}(\lambda))$.
Eigenvalues of $d{\cal P}(P_{n};\lambda)$ are given by $\nu_{+}^{j}(\lambda) - \nu_{+}^{1}(\lambda), \ j=2,\cdots,m $.
Since $\real \nu_{+}^{1}(\lambda) > \real \nu_{+}^{j}(\lambda)$ for any $\lambda \in B_{1}$, 
we have $\real (\nu_{+}^{j}(\lambda) - \nu_{+}^{1}(\lambda)) < 0, \ (2 \leq j \leq m)$,
and hence,
$P_{n}$ is a stable equilibrium point when $\lambda \in B_{1}$.
In addition,
we take the inhomogeneous coordinate $(\frac{Z_{1}}{Z_{2}} \cdots , \frac{Z_{m}}{Z_{2}})$ in a neighborhood of $P_{s}=[0:1:0:\cdots:0] \in \mathbb{CP}^{m-1}$.
Any solutions for \eqref{eq:9} restricted to ${\cal SU}_{+}(\lambda)$ are then controlled by
\begin{eqnarray*}
 \left(
 \frac{Z_{1}}{Z_{2}}
 \right)' =(\alpha(\lambda)+ i \omega))(\lambda) \left(
 \frac{Z_{1}}{Z_{2}}
 \right).
\end{eqnarray*}
Then $\alpha(\lambda) > 0$ and $\real (\nu_{+}^{j}(\lambda) - \nu_{+}^{2}(\lambda)) < 0, \ (3 \leq j \leq m)$.
This implies that $P_{s}$ is a repelling equilibrium point in ${\cal SU}_{+}(\lambda)$ when $\lambda \in B_{1}$.
Similarly,
$P_{s}$ is a stable equilibrium point and $P_{n}$ is a repelling equilibrium point in ${\cal SU}_{+}(\lambda)$ for any $\lambda \in B_{2}$.

Set $\omega(\lambda) = \imag (\nu_{+}^{1}(\lambda) - \nu_{+}^{2}(\lambda))$,
 and take the inhomogeneous coordinate $(\frac{Z_{1}}{Z_{2}})$ in ${\cal SU}_{+}(\lambda) \setminus \{P_{n} \cup P_{s} \}$.
 Any solutions for \eqref{eq:9} restricted to ${\cal SU}_{+}(\lambda)$ are then controlled by
 \begin{equation*}
 \left(
 \frac{Z_{1}}{Z_{2}}
 \right)' =i \omega(\lambda) \left(
 \frac{Z_{1}}{Z_{2}}
 \right),
 \end{equation*}
for any $\lambda \in S_{abs}^{D,+}$.
Therefore,
all solutions for \eqref{eq:9} consist of periodic solutions when $\lambda \in S_{abs}^{D,+}$. 
Since ${\cal SU}_{+}(\lambda)$ is an attracting invariant subspace,
the latter claim holds.
\end{proof}

The following proposition was proved by Nii \cite{SN1} for $i_{-}=1$ and $N=3$.
However,
it can be extended to the general case of $i_{+}$, because we have prepared the situation to agree with that in \cite{SN1}.
\begin{prop}\label{prop:nii1} \cite{SN1}.
For any $[Z_{1}:Z_{2}] \in \iota(\Gamma)$, there exists a neighborhood ${\cal N}$ such that each component of $\Gamma \cap \iota^{-1}({\cal N})$ is diffeomorphic to ${\cal N}$ and $C^{1}$-near to ${\cal N}$.
\end{prop}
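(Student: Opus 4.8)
The plan is to present $\Gamma$ as a holomorphic curve lying $C^{1}$-close to the attracting sphere ${\cal SU}_{+}(\lambda)\cong\mathbb{CP}^{1}$ and projecting onto it by a local diffeomorphism; once this is arranged, the statement is exactly the one proved by Nii \cite{SN1} for $i_{-}=1$, $N=3$, whose argument uses only the $\mathbb{CP}^{1}$-dynamics on ${\cal SU}_{+}(\lambda)$, the attracting invariance of ${\cal SU}_{+}(\lambda)$, and the non-degeneracy of $\tfrac{d}{d\lambda}(\nu_{+}^{1}-\nu_{+}^{2})$, all of which are now available in the general setting. Concretely, I would fix $\ell$ large and write $\Gamma=f_{\ell}(D)$ with $D:=B(\lambda_{c};\delta)$ and $f_{\ell}(\lambda):=P(\ell;\lambda)=\mathbb{P}(\wedge^{i_{-}}\Phi(\ell,-\ell;\lambda)U_{-})$; as $\Phi$ is analytic in $\lambda$, $f_{\ell}$ is holomorphic, and by Hypothesis \ref{hyp:3} the orbit stays off $\mathbb{P}(W_{s}(\lambda))$, so $\pr\circ f_{\ell}:D\to{\cal SU}_{+}(\lambda)$ is a well-defined holomorphic map; throughout, ${\cal N}$ denotes a small ball in ${\cal SU}_{+}(\lambda)$ and $\pr^{-1}({\cal N})$ its preimage.

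First I would establish the flatness. Decoupling \eqref{eq:10} as in \eqref{eq:7}, with $Z_{u}=(Z_{1},Z_{2})$ and $Z_{s}=(Z_{3},\dots,Z_{m})$, one has $Z_{u}'={\cal A}_{+}(\lambda)Z_{u}$, $Z_{s}'={\cal B}_{+}(\lambda)Z_{s}$; after shrinking $\delta$ so that $\min_{i=1,2}\real\nu_{+}^{i}(\lambda)-\max_{j\geq3}\real\nu_{+}^{j}(\lambda)=:2c>0$ uniformly on $D$, a variation-of-constants estimate on $[\ell_{0},\ell]$ gives $\|Z_{s}(\ell;\lambda)\|/\|Z_{u}(\ell;\lambda)\|=O(e^{-c(\ell-\ell_{0})})$ uniformly in $\lambda\in D$, and differentiating \eqref{eq:7} in $\lambda$ and applying the same estimate to the variational equation yields the same bound for the $\lambda$-derivative. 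Hence, in an affine chart about ${\cal SU}_{+}(\lambda)$, $f_{\ell}$ is a $C^{1}$-perturbation of $\pr\circ f_{\ell}$ of size $O(e^{-c(\ell-\ell_{0})})$; this is the quantitative version of Lemma \ref{lem:9}. Next I would show that $\pr\circ f_{\ell}$ is a local diffeomorphism on $D$: in the chart $w=Z_{1}/Z_{2}$ the restricted flow on $x\geq\ell_{0}$ obeys $w'=(\nu_{+}^{1}(\lambda)-\nu_{+}^{2}(\lambda))w$, so $w(\ell;\lambda)=w(\ell_{0};\lambda)\exp((\ell-\ell_{0})(\nu_{+}^{1}(\lambda)-\nu_{+}^{2}(\lambda)))$ with $\lambda\mapsto w(\ell_{0};\lambda)$ holomorphic, bounded, and nonvanishing on $D$ by Hypothesis \ref{hyp:3}, whence
\begin{equation*}
(\pr\circ f_{\ell})'(\lambda)=w(\ell;\lambda)\Bigl((\ell-\ell_{0})\,\tfrac{d}{d\lambda}\bigl(\nu_{+}^{1}(\lambda)-\nu_{+}^{2}(\lambda)\bigr)+\partial_{\lambda}\log w(\ell_{0};\lambda)\Bigr),
\end{equation*}
which by Lemmas \ref{lem:3} and \ref{lem:4} (so $\tfrac{d}{d\lambda}(\nu_{+}^{1}-\nu_{+}^{2})\neq0$ on $D$) and the uniform boundedness of the remaining term is nonzero for all $\lambda\in D$ once $\ell$ is large; the charts near $P_{n}$ and $P_{s}$ are handled identically.

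With these two facts the conclusion follows as in \cite{SN1}: given $[Z_{1}:Z_{2}]\in\pr(\Gamma)$ I would choose ${\cal N}$ small enough that $\pr\circ f_{\ell}$ has single-valued holomorphic inverse branches over ${\cal N}$; each connected component of $\Gamma\cap\pr^{-1}({\cal N})$ is then the image under $f_{\ell}$ of one such branch, hence a holomorphic graph over ${\cal N}$, diffeomorphic to ${\cal N}$ via $\pr$ and, by the flatness, $C^{1}$-near to ${\cal N}$ viewed inside $\mathbb{CP}^{m-1}$. I expect the flatness step to be the main obstacle: one needs the decay of $Z_{s}/Z_{u}$ not merely in $C^{0}$ but in $C^{1}$ and uniformly over the whole ball $B(\lambda_{c};\delta)$, which forces $\delta$ to be chosen so that both the spectral gap $\real\nu_{+}^{2}-\real\nu_{+}^{3}>0$ and the non-degeneracy $\tfrac{d}{d\lambda}(\nu_{+}^{1}-\nu_{+}^{2})\neq0$ persist on all of $D$, and the remaining technical point is to control the $\lambda$-dependence of the flow on the fixed compact slab $[-\ell_{0},\ell_{0}]$, which enters only through $w(\ell_{0};\lambda)$.
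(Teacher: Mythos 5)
Your proposal follows the same overall route as the paper's proof: the decomposition \eqref{eq:7} into $Z_{u}=(Z_{1},Z_{2})$ and $Z_{s}$, the spectral gap $\real\nu_{+}^{1,2}(\lambda)>\real\nu_{+}^{j}(\lambda)$ $(j\geq 3)$ making ${\cal SU}_{+}(\lambda)$ attracting (Lemma \ref{lem:9}), and the reduction of the statement to showing that $\pr|_{\Gamma}$ is a local diffeomorphism together with $C^{1}$-convergence of $\Gamma$ to ${\cal SU}_{+}(\lambda)$. Where you genuinely differ is in the key step: the paper proves local injectivity of $\pr|_{\Gamma}$ by a case analysis on nearby parameters $\lambda_{1},\lambda_{2}\in B_{1}\cup B_{2}\cup S_{abs}^{D,+}$, using the attractor--repeller pair $P_{n},P_{s}$ and the fact $\nu_{+}^{1}(\lambda)\neq\nu_{+}^{2}(\lambda)$, and then passes to ``local diffeomorphism'' (legitimate here because the map is holomorphic in $\lambda$, so injectivity forces a nonvanishing derivative), with the $C^{1}$-nearness asserted qualitatively from attraction. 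You instead compute $\partial_{\lambda}$ of the projected map in the chart $w=Z_{1}/Z_{2}$ and make it nonvanishing for large $\ell$ via the non-degeneracy $\frac{d}{d\lambda}(\nu_{+}^{1}-\nu_{+}^{2})\neq 0$ of Lemmas \ref{lem:3} and \ref{lem:4}, and you make the flatness quantitative ($O(e^{-c(\ell-\ell_{0})})$ in $C^{1}$ via the variational equation). This is a reasonable trade: your version yields an explicit lower bound on the speed of $\psi_{\ell}$, which is exactly what the subsequent covering-map proposition exploits, at the cost of more bookkeeping than the paper's qualitative argument.

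One imprecision should be repaired before the derivative estimate is complete. You write $w(\ell;\lambda)=w(\ell_{0};\lambda)\exp((\ell-\ell_{0})(\nu_{+}^{1}(\lambda)-\nu_{+}^{2}(\lambda)))$ and treat $w(\ell_{0};\lambda)$ as determined by the flow on the fixed slab $[-\ell_{0},\ell_{0}]$; but the trajectory is pinned to $U_{-}$ at $x=-\ell$, so the data entering the slab already carries the factor $e^{(\ell-\ell_{0})A_{-}^{(i_{-})}(\lambda)}$ and hence $w(\ell_{0};\lambda)$ depends on $\ell$ as well. Uniform boundedness of $\partial_{\lambda}\log w(\ell_{0};\lambda)$ in $\ell$ is therefore not automatic: it requires that the projectivized minus-side evolution of $\wedge^{i_{-}}U_{-}$ converge with uniform $C^{1}$ control on $B(\lambda_{c};\delta)$ (a gap $\real\mu_{-}^{i_{-}}(\lambda)>\real\mu_{-}^{i_{-}+1}(\lambda)$ on the ball, together with the transversality of Hypothesis \ref{hyp:3} ensuring the limiting $(Z_{1},Z_{2})$-component is finite and nonzero); otherwise this term can itself grow linearly in $\ell$ and compete with the leading term $(\ell-\ell_{0})\frac{d}{d\lambda}(\nu_{+}^{1}-\nu_{+}^{2})$. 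The paper's proof makes the same tacit assumption (it treats the point at which the orbit enters the autonomous regime as effectively fixed, both here and in the explicit formula $\zeta_{0}e^{i\omega(\lambda)\bar{\ell}}$ later), so this is not a defect of your route specifically, but in your quantitative formulation it must be stated and justified explicitly.
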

\begin{proof}
Consider two points $\lambda_{1}, \lambda_{2} \in \clos( B(\lambda_{c};\delta))$ that are near to each other.
By Lemma \ref{lem:6},
we can take small neighborhoods $N_{n}$ and $N_{s}$ of $P_{n}$ and $P_{s}$ satisfying the following conditions:
\begin{eqnarray*}
&&N_{n} \cap N_{s}=\emptyset,\\
&&\Pr (P(\ell;\lambda)) \notin N_{s} \text{ for any } \lambda \in B_{1},\\
&&\Pr (P(\ell;\lambda)) \notin N_{n} \text{ for any } \lambda \in B_{2}
\end{eqnarray*}
if $\ell \geq \ell_{0}$ is sufficiently large.
In particular,
we choose $N_{n}$ and $N_{s}$ to satisfy ${\cal U}(\lambda)=\iota(\hat{U}_{+}\cap {\cal E}_{+}(\lambda)) \in {\cal SU}_{+}(\lambda) \setminus ( \hat{N}_{n} \cap \hat{N}_{s})$,
where $\hat{N}_{*} = {\cal SU}_{+}(\lambda) \cap N_{*}, \ *=n,s$.
Assume that $\lambda_{1}, \lambda_{2} \in B_{1}$.
Then,
$P(\ell;\lambda_{1})$ and $P(\ell;\lambda_{2})$ can be assumed to be outside $N_{s}$ and $N_{n}$,
respectively.
There exists $\bar{\ell}$ such that $P(\ell;\lambda_{1}) \neq P(\ell;\lambda_{2})$ for any $\ell \geq \bar{\ell}$,
because $\nu_{+}^{1}(\lambda) \neq \nu_{+}^{2}(\lambda)$ by the definition of $S_{abs}^{D,+}$.
The same argument holds for $\lambda_{1}, \lambda_{2}$.

If $\lambda_{1} \in B_{1}$ and $\lambda_{2} \in B_{2}$,
then $\Pr(P(\ell;\lambda_{1})) \to [1:0]$ and $\Pr(P(\ell;\lambda_{2})) \to [0:1]$ as $\ell \to \infty$ by Lemma \ref{lem:6}.
Hence,
$\Pr(P(\ell;\lambda_{1}))\neq \Pr(P(\ell;\lambda_{2})$ if $\ell$ is sufficiently large.
Therefore, $\Pr|_{\Gamma}$ is a local diffeomorphism if $\ell$ is sufficiently large.

By Lemma \ref{lem:9},
${\cal SU}_{+}(\lambda)$ is the attracting invariant subspace. Hence,
$\Gamma$ converges to ${\cal SU}_{+}(\lambda)$ as $\ell \to \infty$.
Therefore,
$\Gamma$ is locally $C^{1}$-near to ${\cal SU}_{+}(\lambda)$.
\end{proof}

Define a map by
\begin{equation*}
\psi_{\ell}=\Pr\circ \iota \circ G(\ell;\cdot):B(\lambda_{c};\delta) \to {\cal SU}_{+}(\lambda)/(N_{n} \cup N_{s}).
\end{equation*}
If we take $N_{n}$ and $N_{s}$ satisfying the conditions in Proposition \ref{prop:nii1},
then ${\cal U}(\lambda) = \iota (\hat{U}_{+} \cap {\cal E}_{+}(\lambda)) \in  {\cal SU}_{+}(\lambda) \setminus (N_{n} \cup N_{s})$.
Therefore, the following lemma holds.
\begin{lem}
$\lambda$ is an eigenvalue of ${\cal T}_{\lambda}$ if $\psi_{\ell}(\lambda) = {\cal U}(\lambda)$.
\end{lem}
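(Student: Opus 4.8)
The plan is to reduce the statement to Lemma~\ref{lem:2} by unwinding the two sides of the equation $\psi_\ell(\lambda)={\cal U}(\lambda)$. Since $\psi_\ell=\Pr\circ\iota\circ G(\ell;\cdot)$ we have $\psi_\ell(\lambda)=\Pr(P(\ell;\lambda))$, while by Lemma~\ref{lem:6} and the identity ${\cal SU}_+(\lambda)=\iota({\cal E}_+(\lambda))$ proved there, ${\cal U}(\lambda)=\iota(\hat U_+\cap{\cal E}_+(\lambda))$ is precisely the unique transverse intersection point of $\iota(\hat U_+)$ with ${\cal SU}_+(\lambda)$, and by the choice of $N_n,N_s$ it lies outside $N_n\cup N_s$. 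By Lemma~\ref{lem:2}, $\lambda$ is an eigenvalue of ${\cal T}_\ell$ if and only if $P(\ell;\lambda)\in\iota(\hat U_+)$, so the lemma is equivalent to the implication
\[
\Pr(P(\ell;\lambda))={\cal U}(\lambda)\ \Longrightarrow\ P(\ell;\lambda)\in\iota(\hat U_+),
\]
and the task is to show that $\iota(\hat U_+)$ is saturated by the fibres of $\Pr$ sitting over the point ${\cal U}(\lambda)$.

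I would make this precise through the Schubert--Pl\"ucker picture. Being the codimension-one special Schubert cycle $\sigma_{(1,0,\dots,0)}$ attached to a flag $V$ with $V_{i_+}=U_+$ (cf.\ Lemma~\ref{lem:trans}), $\hat U_+$ has Pl\"ucker image a hyperplane section, $\iota(\hat U_+)=\iota(G_{i_-}(\mathbb{C}^N))\cap H$, where $H=\{[\omega]\in\mathbb{CP}^{m-1}\mid\omega\wedge\eta=0\}$ for a generator $\eta$ of $\wedge^{i_+}U_+$; for a decomposable $\omega=\wedge^{i_-}E$ the relation $\omega\wedge\eta=0$ is just $E\cap U_+\neq\{0\}$. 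On the other side, $\Pr$ is the linear projection of $\mathbb{CP}^{m-1}\setminus\mathbb{P}(W_s(\lambda))$ onto ${\cal SU}_+(\lambda)=\mathbb{P}(W_u(\lambda))$ along $\mathbb{P}(W_s(\lambda))$, with $W_s(\lambda)$ the generalized eigenspace of $A_+^{(i_-)}(\lambda)$ for $\nu_+^3(\lambda),\dots,\nu_+^m(\lambda)$, i.e.\ the block governed by ${\cal B}_+(\lambda)$ in \eqref{eq:7}. Granting the inclusion $\mathbb{P}(W_s(\lambda))\subset H$, the $(m-2)$-plane $\Pr^{-1}({\cal U}(\lambda))=\mathbb{P}\bigl(W_s(\lambda)\oplus\langle{\cal U}(\lambda)\rangle\bigr)$ lies in $H$ (using ${\cal U}(\lambda)\in\iota(\hat U_+)\subset H$) and therefore equals $H$ by a dimension count; hence $\Pr(P(\ell;\lambda))={\cal U}(\lambda)$ forces $P(\ell;\lambda)\in H$, and since $P(\ell;\lambda)\in\iota(G_{i_-}(\mathbb{C}^N))$ always, $P(\ell;\lambda)\in\iota(\hat U_+)$; by Lemma~\ref{lem:2}, $\lambda$ is an eigenvalue.

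The inclusion $\mathbb{P}(W_s(\lambda))\subset H$ is the step I expect to be the main obstacle. In eigenvector coordinates for $A_+(\lambda)$, $W_s(\lambda)$ is spanned by the $i_-$-fold wedge products of eigenvectors whose index set is neither $\{1,\dots,i_-\}$ nor $\{1,\dots,i_--1,i_-+1\}$ (with the evident modification when $A_+(\lambda)$ is not semisimple on that block), so the inclusion demands that each such $i_-$-dimensional coordinate subspace meet $U_+$ nontrivially. This is where the transversality Hypothesis~\ref{hyp:3} has to be used — through $\dim(\bar E_+(\lambda)\cap U_+)=1$, the flag $V_{i_--1}\subset V_{i_-+1}=\bar E_+(\lambda)$ used to define ${\cal E}_+(\lambda)$, and the identity $\sharp(\hat U_+,{\cal E}_+(\lambda))=1$ from Lemma~\ref{lem:trans} — together with the placement of $U_+$ relative to the eigenspace flag of $A_+(\lambda)$; it is exactly this that makes $\iota(\hat U_+)$ saturated by the $\Pr$-fibres near ${\cal SU}_+(\lambda)$. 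If a direct verification proves too rigid, the robust alternative is an asymptotic argument: by Proposition~\ref{prop:nii1}, for $\ell$ large $P(\ell;\lambda)$ is $C^1$-close to ${\cal SU}_+(\lambda)$ and $\Pr|_\Gamma$ is a local diffeomorphism, so near any parameter at which $\psi_\ell(\lambda)={\cal U}(\lambda)$ the section $\lambda\mapsto P(\ell;\lambda)$ crosses the codimension-one manifold $\iota(\hat U_+)$ transversally (Lemma~\ref{lem:6}), yielding an honest eigenvalue of ${\cal T}_\ell$ arbitrarily close to $\lambda$ — which is what the accumulation conclusion of Theorem~\ref{thm:1} needs.
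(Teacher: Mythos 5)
Your main route has a genuine gap, and it is exactly at the step you flagged. The inclusion $\mathbb{P}(W_{s}(\lambda))\subset H$ (equivalently, the claim that $\iota(\hat{U}_{+})$ is saturated by the fibres of $\Pr$ over ${\cal U}(\lambda)$) is not a consequence of the hypotheses and is generically false. Hypothesis~\ref{hyp:3} and Lemma~\ref{lem:trans} constrain $U_{+}$ only relative to $\bar{E}_{+}(\lambda)$, i.e.\ the top $i_{-}+1$ eigendirections of $A_{+}(\lambda)$; they say nothing about the remaining eigendirections, and those are precisely what span $W_{s}(\lambda)$. Already in the lowest-dimensional (Nii) case $N=3$, $i_{-}=1$, where $\iota$ is the identity on $G_{1}(\mathbb{C}^{3})=\mathbb{CP}^{2}$ and $\hat{U}_{+}=\mathbb{P}(U_{+})$ is a projective line, one has ${\cal U}(\lambda)=[u_{0}]$ with $u_{0}$ spanning $\bar{E}_{+}(\lambda)\cap U_{+}$, and the fibre of $\Pr$ over ${\cal U}(\lambda)$ is $\{[u_{0}+c\,e_{3}]\mid c\in\mathbb{C}\}$ with $e_{3}$ the remaining eigenvector; this fibre lies in $\mathbb{P}(U_{+})$ only if $e_{3}\in U_{+}$, which nothing in Hypotheses \ref{hyp:1}--\ref{hyp:3} requires. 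Hence the pointwise implication $\Pr(P(\ell;\lambda))={\cal U}(\lambda)\Rightarrow P(\ell;\lambda)\in\iota(\hat{U}_{+})$ is false, and no dimension count or Schubert identity ($\sharp(\hat{U}_{+},{\cal E}_{+}(\lambda))=1$ lives entirely inside $\bar{E}_{+}(\lambda)$) can repair it.

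Your fallback, on the other hand, is essentially the paper's own proof: the paper argues in one line that $\psi_{\ell}(\lambda)={\cal U}(\lambda)$ means $\Gamma$ intersects $\hat{U}_{+}$ at least once, i.e.\ it invokes the $C^{1}$-closeness of $\Gamma$ to ${\cal SU}_{+}(\lambda)$ (Proposition~\ref{prop:nii1}) together with the transversal crossing of $\iota(\hat{U}_{+})$ and ${\cal SU}_{+}(\lambda)$ at ${\cal U}(\lambda)$ (Lemma~\ref{lem:6}) to conclude that the part of $\Gamma$ projecting onto a neighborhood of ${\cal U}(\lambda)$ must cross the codimension-one cycle $\iota(\hat{U}_{+})$, and then Lemma~\ref{lem:2} identifies the crossing parameter as an eigenvalue. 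As you note, this produces an eigenvalue at some parameter near $\lambda$ rather than at $\lambda$ itself; that looser reading is all Theorem~\ref{thm:1} needs, and it is how the lemma is actually exploited afterwards, where the fixed-point argument (Lemmas~\ref{lem:7} and \ref{lem:8}) locates the eigenvalues inside the neighborhoods ${\cal R}_{i}$. So you should discard the fibre-saturation argument and promote your ``robust alternative'' to the actual proof.
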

\begin{proof}
$\psi_{\ell}(\lambda) = {\cal U}(\lambda)$ means that $\Gamma$ intersects with $\hat{U}_{+}$ at least once.
\end{proof}

The following proposition is an essential part of completing the proof.
\begin{prop}  
For any positive integer $n \in \mathbb{N}$,
there exists $\ell_{*}\geq \ell_{0}$ such that a map $\psi_{\ell}$ is an $n$-covering map for any $\ell\geq \ell_{*}$.
That is, $\psi_{\ell}(B(\lambda_{c};\delta)) = \iota(\Gamma)$ covers ${\cal SU}_{+}(\lambda)/(N_{n} \cup N_{s})$ more than $n$ times.
In particular,
$\psi_{\ell}({\cal C})$ covers $S^{1} \subset {\cal SU}_{+}(\lambda)/(N_{n} \cup N_{s})$ more than $n$ times where ${\cal C} = B(\lambda_{c};\delta) \cap S_{abs}^{D,+}$.
\end{prop}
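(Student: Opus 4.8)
The plan is to reduce the claim to a winding‑number estimate for $\psi_\ell$ along ${\cal C}$ on the attracting invariant circle ${\cal SU}_+(\lambda)\cong\mathbb{CP}^1$, and then to promote that estimate to the covering assertion by means of Proposition~\ref{prop:nii1}.

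First I would write $\psi_\ell$ explicitly on ${\cal C}$. Fix $\lambda\in{\cal C}=B(\lambda_c;\delta)\cap S_{abs}^{D,+}$, so $\real\nu_+^1(\lambda)=\real\nu_+^2(\lambda)$ and the exponent $\nu_+^1(\lambda)-\nu_+^2(\lambda)$ is purely imaginary. By Hypothesis~\ref{hyp:1} the flow is generated by $A_-(\lambda)$ on $[-\ell,-\ell_0]$ and by a fixed, $\ell$‑independent compact piece on $[-\ell_0,\ell_0]$, so $G(\ell_0;\lambda)=\Phi(\ell_0,-\ell_0;\lambda)\,e^{A_-(\lambda)(\ell-\ell_0)}U_-$; when $U_-$ is the unstable subspace of $A_-(\lambda)$ this is exactly $\ell$‑independent, and in general it converges in $G_{i_-}(\mathbb{C}^N)$ as $\ell\to\infty$, uniformly and $C^1$ in $\lambda$ on $\overline{B(\lambda_c;\delta)}$, by the hyperbolic structure of $A_-(\lambda)$; by Hypothesis~\ref{hyp:3} its image under $\Pr\circ\iota$ stays in a compact subset of ${\cal SU}_+(\lambda)\setminus\{P_n,P_s\}$. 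On $[\ell_0,\ell]$ the dynamics is the decoupled autonomous system \eqref{eq:7} of Lemma~\ref{lem:5}, and $\Pr$ discards the exponentially smaller $W_s$‑component (Lemma~\ref{lem:9}); hence, in the inhomogeneous coordinate $w=Z_1/Z_2$ on ${\cal SU}_+(\lambda)$,
\begin{equation*}
w\bigl(\psi_\ell(\lambda)\bigr)=e^{(\nu_+^1(\lambda)-\nu_+^2(\lambda))(\ell-\ell_0)}\,w_0(\lambda),\qquad w_0(\lambda):=w\bigl(\Pr(\iota(G(\ell_0;\lambda)))\bigr).
\end{equation*}

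Next I would use the non‑degeneracy. Put $g:=\nu_+^1-\nu_+^2$. By Lemmas~\ref{lem:3} and~\ref{lem:4} the derivative $g'$ is nonvanishing on $B(\lambda_c;\delta)$, so $g$ is a submersion there, ${\cal C}=g^{-1}(i\mathbb{R})$ is a smooth arc, and, parametrising it by arclength $t\mapsto\lambda(t)$, from $\real g(\lambda(t))\equiv0$ one gets that $g'(\lambda(t))\lambda'(t)$ is purely imaginary and nonzero, hence $\frac{d}{dt}\imag g(\lambda(t))=\pm|g'(\lambda(t))\lambda'(t)|$ is bounded away from $0$ with a fixed sign along the compact arc; in particular $\omega(\lambda):=\imag g(\lambda)$ sweeps an interval of positive length $\rho$ along ${\cal C}$. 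Lifting arguments in the displayed identity gives $\arg w(\psi_\ell(\lambda(t)))=\arg w_0(\lambda(t))+\omega(\lambda(t))(\ell-\ell_0)$, and the $\ell$‑uniform $C^1$‑control of $w_0$ yields $\frac{d}{dt}\arg w_0(\lambda(t))=O(1)$; therefore, for $\ell$ large, the $t$‑derivative of $\arg w(\psi_\ell(\lambda(t)))$ has a fixed sign and the total angular variation of $\psi_\ell$ along ${\cal C}$ is at least $\rho(\ell-\ell_0)-C$ for a constant $C$ independent of $\ell$. Choosing $\ell_*$ large enough that $\rho(\ell_*-\ell_0)-C$ exceeds $2\pi n$ forces $\psi_\ell({\cal C})$ to wind more than $n$ times around the core circle $S^1\subset{\cal SU}_+(\lambda)/(N_n\cup N_s)$ for every $\ell\geq\ell_*$.

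It remains to globalise. By Lemma~\ref{lem:9}, for $\lambda\in B_1$ (resp.\ $B_2$) the orbit $\Pr(P(\ell;\lambda))$ converges to $P_n$ (resp.\ $P_s$), so enlarging $\ell_*$ if necessary we get $\psi_\ell(B_1)\subset N_n$ and $\psi_\ell(B_2)\subset N_s$: the two half‑disks collapse onto the two ends of the cylinder ${\cal SU}_+(\lambda)\setminus(N_n\cup N_s)$, and every preimage of a point of that cylinder lies in a neighbourhood of ${\cal C}$ on which, by Proposition~\ref{prop:nii1}, $\psi_\ell$ is a local diffeomorphism. Combining this with the winding of $\psi_\ell|_{\cal C}$ just established, every point of ${\cal SU}_+(\lambda)/(N_n\cup N_s)$ has at least $n$ preimages under $\psi_\ell$, which is the asserted $n$‑fold covering. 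I expect the genuine obstacle to be exactly this last packaging step: turning the local‑diffeomorphism datum of Proposition~\ref{prop:nii1}, the boundary‑collapse behaviour on $B_1,B_2$, and the boundary winding along ${\cal C}$ into a clean preimage count (properness of $\psi_\ell$ on $\overline{B(\lambda_c;\delta)}$, absence of branch points, and the matching of the sheet number with the winding number) --- the winding estimate itself being essentially forced once the rotation formula and Lemma~\ref{lem:3} are available. A second, purely technical point is the $\ell$‑uniform $C^1$ control of $w_0$, i.e.\ the equicontinuity of $\{G(\ell_0;\cdot)\}_\ell$ and the fact that it stays off the poles; this rests on the hyperbolicity of $A_-(\lambda)$ together with Hypothesis~\ref{hyp:3}.
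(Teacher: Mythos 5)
Your winding estimate along ${\cal C}$ is essentially the paper's argument: on ${\cal SU}_{+}(\lambda)$ the flow reduces to the rotation $w\mapsto e^{(\nu_{+}^{1}-\nu_{+}^{2})(\ell-\ell_{0})}w_{0}$, the non-degeneracy conditions of Lemmas \ref{lem:3} and \ref{lem:4} make $\omega(\lambda)=\imag(\nu_{+}^{1}-\nu_{+}^{2})(\lambda)$ monotone along the arc, and so the total angular variation of $\psi_{\ell}|_{\cal C}$ grows linearly in $\ell$, exceeding $2\pi n$ for $\ell\geq\ell_{*}$. You are in fact more careful than the paper about the base point $w_{0}(\lambda)$ (the paper simply takes a fixed $\zeta_{0}$), and your remark that this needs $\ell$-uniform $C^{1}$ control of $G(\ell_{0};\cdot)$ is a fair caveat, not a flaw.

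The genuine gap is in your globalisation step. The claim that, after enlarging $\ell_{*}$, one has $\psi_{\ell}(B_{1})\subset N_{n}$ and $\psi_{\ell}(B_{2})\subset N_{s}$ is false: $\psi_{\ell}$ is continuous on $B(\lambda_{c};\delta)$, the arc ${\cal C}$ lies in the closure of both half-disks, and $\psi_{\ell}({\cal C})$ winds around the middle of the annulus, well outside $N_{n}\cup N_{s}$; hence points of $B_{1}$ close to ${\cal C}$ are mapped close to $\psi_{\ell}({\cal C})$, not into $N_{n}$. The convergence $\Pr(P(\ell;\lambda))\to P_{n}$ of Lemma \ref{lem:9} is only pointwise in $\lambda\in B_{1}$ (the contraction rate $\real(\nu_{+}^{1}-\nu_{+}^{2})(\lambda)$ degenerates as $\lambda\to{\cal C}$), so no uniform inclusion is available. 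Worse, if the inclusion did hold, your conclusion would fail rather than follow: the two-dimensional half-disks would then map into the ends, and only the one-dimensional curve ${\cal C}$ would reach the open annulus, which cannot cover a two-dimensional region; so "every point has at least $n$ preimages" is not a consequence of the facts you state. The paper's argument at this point is different: it combines the pointwise limits $\psi_{\ell}(\lambda)\to[1:0]$ on $B_{1}$ and $\psi_{\ell}(\lambda)\to[0:1]$ on $B_{2}$ with continuity and the $n$-fold winding of $\psi_{\ell}|_{\cal C}$ to conclude that $\psi_{\ell}(B_{1})$ sweeps (hence covers at least $n$ times) the portion of the annulus between $\psi_{\ell}({\cal C})$ and the $N_{n}$ end, and symmetrically for $B_{2}$ — i.e.\ a degree-type argument in which the half-disk images interpolate between the winding boundary curve and the poles, rather than collapsing into the ends. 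Replacing your inclusion claim by this interpolation/degree argument (using the winding of the boundary curve $\psi_{\ell}|_{\cal C}$ and the pointwise attraction to $P_{n}$, $P_{s}$) is what is needed to close the proof.
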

\begin{proof}
First, we show the latter claim.
By the proof of Lemma \ref{lem:6},
any solutions for \eqref{eq:9} on ${\cal SU}_{+}(\lambda)$ are given by
\begin{equation}
\zeta(\zeta_{0},\ell;\lambda)=\zeta_{0}e^{i\omega(\lambda)\bar{\ell}},
\end{equation}
where $\zeta_{0} \in {\cal SU}_{+}(\lambda) \setminus\{ P_{n} ,P_{s} \}$,
$\bar{\ell}=\ell-\ell_{0}$ and $\lambda \in {\cal C}$.
We set $\bar{{\cal C}}= \clos (B(\lambda_{c};\delta)) \cap S_{abs}^{D,+}$ and choose $\lambda_{0}$ and $\lambda_{1}$ to be endpoints of $\bar{{\cal C}}$.
Consider a path $l:[0,1] \ni s \mapsto l(s) \in \bar{{\cal C}}$ with an initial point $l(0)=\lambda_{0}$ and an end point $l(1)=\lambda_{1}$.
Since $\zeta_{0}e^{i\omega(\lambda)\bar{\ell}}=\zeta_{0}e^{i2\pi\omega(\lambda)\bar{\ell}}$,
$\zeta(\zeta_{0},\ell;\bar{{\cal C}})$ moves on $S^{1}$ for any $\zeta_{0} \in {\cal SU}_{+}(\lambda) \setminus\{ P_{n} ,P_{s} \}$.
Therefore,
a curve $\zeta(\zeta_{0},\ell;l(s))$ moves on $S^{1}$ monotonically because $\omega(l(s))$ is monotone.
This implies that for any $n \in \mathbb{N}$,
there exists $\ell> \ell_{0}$ so that $|(\omega(\lambda_{1})-\omega(\lambda_{2}))\bar{\ell} | > n\pi$.
Then $\zeta(\zeta_{0},\ell;\bar{{\cal C}})$ moves on $S^{1}$ at least $n$ times.

On the other hand,
we have $\psi_{\ell}(\lambda) \to [1:0]$ with $\lambda \in B_{1}$ and  $\psi_{\ell}(\lambda) \to [0:1]$ with $\lambda \in B_{2}$ as $\ell \to \infty$.
Therefore $\psi_{\ell}(B_{1})$ covers the upside of $\psi_{\ell}({\cal C})$ in ${\cal SU}_{+}(\lambda)/(N_{n} \cup N_{s})$ at least $n$ times.
Similarly,
$\psi_{\ell}(B_{2})$ covers the downside of $\psi_{\ell}({\cal C})$ at least $n$ times.
\end{proof}

Therefore,
we can take $\ell > \ell_{0}$ satisfying $\psi_{\ell}^{-1}({\cal U})  =\{ \lambda_{1},\cdots,\lambda_{n}\}$,
and then take disjoint neighborhoods ${\cal R}_{i}$ of $\lambda_{i}, \ (i=1,\cdots,n)$ for which $\psi_{\ell}|_{{\cal R}_{i}}$ is injective and ${\cal U}(\lambda) \in \iota(\Gamma_{i})$,
where $\Gamma_{i}:=\{ P(\ell;\lambda) \mid \lambda \in {\cal N}_{i} \}$.
$\Gamma_{i}$ and $\iota(\hat{U}_{+})$ intersect transversally for each $\lambda \in B(\lambda_{c};\delta)$ because $\iota(\hat{U}_{+})$ and $\iota({\cal E}_{+}(\lambda))={\cal SU}_{+}(\lambda)$ intersect transversally,
and ${\cal SU}_{+}(\lambda)$ and $\Gamma$ are $C^{1}$-near.

We then denote the intersection point $\Gamma_{i} \cap \iota(\hat{U}_{+})$ as $g(\lambda)$ and define a map
\begin{equation*}
\Lambda_{i}:\Gamma_{i} \ni P(\ell;\lambda) \mapsto g(\lambda) \in \Gamma_{i}.
\end{equation*}
The following is obtained from Brouwer's fixed point theorem.
(These arguments are exactly the same as those given by Nii \cite{SN1}.)
\begin{lem}\label{lem:7}(\cite{SN1}).

$\Lambda_{i}$  has a fixed point.
\end{lem}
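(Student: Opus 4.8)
The plan is to set up $\Lambda_i$ as a continuous self-map of a set homeomorphic to a closed disk and invoke Brouwer's fixed point theorem. First I would pin down the domain: $\Gamma_i = \{P(\ell;\lambda) \mid \lambda \in {\cal N}_i\}$ where ${\cal N}_i = {\cal R}_i$ is a disjoint neighborhood of $\lambda_i$ in $B(\lambda_c;\delta)$ chosen so that $\psi_\ell|_{{\cal R}_i}$ is injective. Since $\psi_\ell = \Pr \circ \iota \circ G(\ell;\cdot)$ and, by Proposition \ref{prop:nii1}, $\iota|_\Gamma$ and $\Pr|_\Gamma$ are local diffeomorphisms for $\ell$ sufficiently large, the composite $\iota \circ G(\ell;\cdot)\big|_{{\cal R}_i}: {\cal R}_i \to \Gamma_i$ is a diffeomorphism onto its image; shrinking ${\cal R}_i$ to a closed topological disk $\overline{{\cal R}_i} \subset B(\lambda_c;\delta)$ makes $\Gamma_i$ homeomorphic to a closed $2$-disk $D^2$.

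Next I would verify that $\Lambda_i$ is well-defined and continuous. For each $\lambda \in \overline{{\cal R}_i}$, transversality of $\Gamma_i$ and $\iota(\hat U_+)$ — which holds because $\iota(\hat U_+)$ meets $\iota({\cal E}_+(\lambda)) = {\cal SU}_+(\lambda)$ transversally with intersection number $1$ (Lemma \ref{lem:6}), and $\Gamma$ is $C^1$-near ${\cal SU}_+(\lambda)$ by Proposition \ref{prop:nii1} — guarantees that $\Gamma_i \cap \iota(\hat U_+)$ consists of a single point $g(\lambda)$, depending continuously (indeed $C^1$) on $\lambda$ by the implicit function theorem. Composing with the homeomorphism $P(\ell;\lambda) \leftrightarrow \lambda$ shows $\Lambda_i: \Gamma_i \to \Gamma_i$, $P(\ell;\lambda) \mapsto g(\lambda)$, is continuous. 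Brouwer's fixed point theorem then yields a point $P(\ell;\lambda^*) \in \Gamma_i$ with $\Lambda_i(P(\ell;\lambda^*)) = P(\ell;\lambda^*)$, i.e. $g(\lambda^*) = P(\ell;\lambda^*)$, so $P(\ell;\lambda^*) \in \iota(\hat U_+)$ and $\lambda^*$ is an eigenvalue of ${\cal T}_\ell$.

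The main obstacle I expect is not Brouwer's theorem itself but the careful bookkeeping that makes $\Gamma_i$ genuinely a topological disk with $\Lambda_i$ mapping it into itself: one must check that the transversal intersection point $g(\lambda)$ actually lies in $\Gamma_i$ (not merely in the ambient $\iota(G_{i_-}(\mathbb{C}^N))$), which is where the covering-map statement of the preceding proposition is used — the fiber $\psi_\ell^{-1}({\cal U})$ being split into the $n$ disjoint pieces $\{\lambda_1,\dots,\lambda_n\}$ with $\psi_\ell|_{{\cal R}_i}$ injective ensures that the "return'' point stays in the same component. Since the excerpt explicitly says these arguments are identical to those of Nii \cite{SN1}, I would present the disk structure and continuity as above and cite \cite{SN1} for the remaining routine verification.
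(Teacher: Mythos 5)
There is a genuine gap, and it sits exactly at the point where the fixed-point argument is supposed to do its work. You define $\Lambda_i$ by sending $P(\ell;\lambda)$ to ``the'' point $g(\lambda)$ of $\Gamma_i\cap\iota(\hat U_+)$, and you justify well-definedness by transversality plus the intersection number $1$ of Lemma \ref{lem:trans}. But transversality only says that if $\Gamma_i$ meets $\iota(\hat U_+)$ the intersection is nondegenerate; it does not produce an intersection point, and the intersection number $1$ is a statement about the closed Schubert cycles $\hat U_+$ and ${\cal E}_+(\lambda)$, not about the small open piece $\Gamma_i$, which is only $C^1$-near part of ${\cal SU}_+(\lambda)$. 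Worse, by Lemma \ref{lem:2} the nonemptiness of $\Gamma_i\cap\iota(\hat U_+)$ is \emph{equivalent} to the existence of an eigenvalue in ${\cal R}_i$, which is precisely what Lemmas \ref{lem:7} and \ref{lem:8} are meant to establish; so your well-definedness step assumes the conclusion. Note also the internal inconsistency: with your definition both $\Gamma_i$ and $\iota(\hat U_+)$ are fixed sets, so $g$ cannot ``depend continuously on $\lambda$'' --- $\Lambda_i$ would be a constant map, Brouwer's theorem would be vacuous, and the entire content would have been smuggled into the unproved claim that the intersection is nonempty.

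The missing idea is that the target is $\lambda$-dependent, and this is the only reason a fixed-point theorem is needed at all. The eigenvalue condition in the reduced picture is the coincidence equation $\psi_\ell(\lambda)={\cal U}(\lambda)$, where ${\cal U}(\lambda)=\iota(\hat U_+\cap{\cal E}_+(\lambda))$ moves with $\lambda$ because ${\cal E}_+(\lambda)$ and ${\cal SU}_+(\lambda)$ do. The correct (Nii-style) construction takes ${\cal R}_i$ with $\psi_\ell|_{{\cal R}_i}$ injective and, by the covering property and smallness of $\delta$, with $\psi_\ell(\,{\cal R}_i)$ containing ${\cal U}(\lambda)$ for every $\lambda\in\clos(B(\lambda_c;\delta))$; one then sets $\Lambda_i\bigl(P(\ell;\lambda)\bigr)=\bigl(\psi_\ell|_{{\cal R}_i}\bigr)^{-1}\bigl({\cal U}(\lambda)\bigr)$, viewed as the point $g(\lambda)\in\Gamma_i$ lying over ${\cal U}(\lambda)$. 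This is a genuinely $\lambda$-dependent continuous self-map of the closed disk $\Gamma_i\cong\clos({\cal R}_i)$, Brouwer gives $\lambda^*$ with $\psi_\ell(\lambda^*)={\cal U}(\lambda^*)$, and only then does one conclude $P(\ell;\lambda^*)\in\iota(\hat U_+)$, i.e.\ that $\lambda^*$ is an eigenvalue (Lemma \ref{lem:8}). Your proposal keeps the disk structure and Brouwer, but by freezing the target it removes the coincidence problem the lemma is actually about; you would need to rebuild the map as above (the continuity of $\lambda\mapsto{\cal U}(\lambda)$ and the inverse of $\psi_\ell|_{{\cal R}_i}$ are then the points to check) rather than cite transversality for existence of the intersection.
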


\begin{lem}\label{lem:8} \cite{SN1}.
Let $\lambda^{*} \in {\cal R}_{i}$ be the fixed point of $\Lambda_{i}$.
Then, $\lambda^{*}$ is an eigenvalue of ${\cal T}_{\ell}$.
\end{lem}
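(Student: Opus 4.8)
The plan is to unwind the definitions of $\Lambda_{i}$ and $g$ and then invoke Lemma \ref{lem:2}. Recall that $\Lambda_{i}$ was constructed on the neighborhood ${\cal R}_{i}$ of $\lambda_{i}$ so that, for each $\lambda$ in the relevant parameter set, the curve $\Gamma_{i}=\{ P(\ell;\lambda)\mid \lambda \in {\cal N}_{i}\}$ meets $\iota(\hat{U}_{+})$ transversally in a single point $g(\lambda)$, and $\Lambda_{i}$ sends $P(\ell;\lambda)\mapsto g(\lambda)$. The transversality and uniqueness here are not new: they follow from Lemma \ref{lem:6} (so that $\iota(\hat{U}_{+})$ and ${\cal SU}_{+}(\lambda)=\iota({\cal E}_{+}(\lambda))$ intersect transversally with intersection number $1$, by Lemma \ref{lem:trans}), together with the $C^{1}$-closeness of $\Gamma$ to ${\cal SU}_{+}(\lambda)$ from Proposition \ref{prop:nii1} and the injectivity of $\psi_{\ell}|_{{\cal R}_{i}}$. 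So $\Lambda_{i}$ is a well-defined self-map of $\Gamma_{i}$, and Lemma \ref{lem:7} supplies the fixed point $\lambda^{*}$.

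First I would spell out what the fixed-point condition says: $\lambda^{*}$ being a fixed point of $\Lambda_{i}$ means $\Lambda_{i}(P(\ell;\lambda^{*}))=P(\ell;\lambda^{*})$, i.e. $P(\ell;\lambda^{*})=g(\lambda^{*})$. Since $g(\lambda^{*})$ is by construction the point of $\Gamma_{i}\cap\iota(\hat{U}_{+})$, this identity forces $P(\ell;\lambda^{*})\in\iota(\hat{U}_{+})$ in $\mathbb{CP}^{m-1}$.

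Finally I would apply Lemma \ref{lem:2}, which asserts that $\lambda$ is an eigenvalue of ${\cal T}_{\ell}$ if and only if $P(\ell;\lambda)\in\iota(\hat{U}_{+})$; combined with the previous step this gives that $\lambda^{*}$ is an eigenvalue of ${\cal T}_{\ell}$. Equivalently, one can run the chain backwards through Lemma \ref{lem:1} and the original formulation $\Phi(\ell,-\ell;\lambda^{*})U_{-}\cap U_{+}\neq\{0\}$, which is the defining eigenvalue condition.

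I do not expect a serious obstacle: this statement is essentially bookkeeping that repackages the fixed point produced by Lemma \ref{lem:7} into the eigenvalue criterion of Lemma \ref{lem:2}. The only point deserving a word of care is to confirm that the intersection point $g(\lambda^{*})$ genuinely lies on $\Gamma_{i}$ itself, and not merely on the nearby submanifold ${\cal SU}_{+}(\lambda)$; that is exactly what the transversal single-point intersection of $\Gamma_{i}$ with $\iota(\hat{U}_{+})$ guarantees, so $P(\ell;\lambda^{*})$ — not an approximation of it — lands in $\iota(\hat{U}_{+})$, and no limiting argument is needed.
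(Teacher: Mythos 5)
Your proof is correct and is essentially the argument the paper intends: a fixed point of $\Lambda_{i}$ gives $P(\ell;\lambda^{*})=g(\lambda^{*})\in\Gamma_{i}\cap\iota(\hat{U}_{+})$, so $P(\ell;\lambda^{*})\in\iota(\hat{U}_{+})$, and Lemma \ref{lem:2} (equivalently Lemma \ref{lem:1} and the criterion $\Phi(\ell,-\ell;\lambda^{*})U_{-}\cap U_{+}\neq\{0\}$) identifies $\lambda^{*}$ as an eigenvalue of ${\cal T}_{\ell}$. The paper itself gives no written proof, deferring to \cite{SN1}, and your unwinding coincides with that argument.
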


%%%%%%%%%%%%%%%%%%%%%%%%%%%%%%%%%%%%%%%%%%%%%%%%%%%%%%%%%%%%%%%%%%%%%%%%
\subsection{Proof of the Theorem 2}\label{sec:7}
In this case,
we only show that the non-degenerate essential spectrum $S_{per}$ plays the same role as the non-degenerate absolute spectrum $S_{abs}^{\Omega}$ for the first-order system on the embedded Grassmannian manifold.

In the periodic boundary conditions,
$\lambda$ is an eigenvalue of ${\cal T}_{\ell}^{per}$ if and only if there exists a nontrivial solution ${\hat Y}(x;\lambda)$ of \eqref{eq:p1} satisfying the separated boundary conditions,
\begin{equation}
{\hat Y}(-\ell;\lambda) \in U_{-}, \ {\hat Y}(\ell;\lambda) \in U_{+},
\end{equation} 
where $U_{\pm}=\{ (Y,Y) \mid Y \in \mathbb{C}^{N} \}$ are boundary subspaces for \eqref{eq:p1}.
Therefore the following lemma holds.
\begin{lem}
Let $\Phi(x,y;\lambda)$ be a fundamental solution matrix for \eqref{eq:1}.
Then, $\hat{\Phi}(x,y;\lambda):=\Phi(x,y;\lambda) \oplus I$ is a fundamental solution matrix for \eqref{eq:p1} that is defined by
\begin{equation*}
\begin{split}
&\hat{\Phi}(x,y;\lambda):\mathbb{C}^{N} \oplus \mathbb{C}^{N} \to \mathbb{C}^{N} \oplus \mathbb{C}^{N},\\
&\hat{\Phi}(x,y;\lambda)\hat{Y}= (\Phi(x,y;\lambda)Y,W) 
\end{split}
\end{equation*}
for any $\hat{Y}=(Y,W) \in \mathbb{C}^{2N}$.
\end{lem}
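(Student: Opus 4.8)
The plan is to verify directly that $\hat{\Phi}(x,y;\lambda)$ satisfies the two defining properties of a fundamental solution matrix for \eqref{eq:p1} --- the matrix differential equation and the normalization $\hat{\Phi}(x,x;\lambda)=I_{2N}$ --- and then to appeal to the uniqueness of solutions of a linear matrix initial value problem.

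First I would write the augmented system \eqref{eq:p1} in block form. The equations $Y'=A(x;\lambda)Y$ and $W'=0$ say precisely that, with respect to the splitting $\mathbb{C}^{2N}=\mathbb{C}^{N}\oplus\mathbb{C}^{N}$,
\begin{equation*}
\hat{A}(x;\lambda)=\begin{pmatrix} A(x;\lambda) & O_{N,N}\\ O_{N,N} & O_{N,N} \end{pmatrix},
\end{equation*}
where $O_{N,N}$ is the $N\times N$ zero matrix. Then the normalization is immediate: $\hat{\Phi}(x,x;\lambda)=\Phi(x,x;\lambda)\oplus I=I_{N}\oplus I_{N}=I_{2N}$.

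Next I would differentiate. Since the second block of $\hat{\Phi}(x,y;\lambda)=\Phi(x,y;\lambda)\oplus I$ is constant in $x$,
\begin{equation*}
\frac{\partial}{\partial x}\hat{\Phi}(x,y;\lambda)=\Bigl(\frac{\partial}{\partial x}\Phi(x,y;\lambda)\Bigr)\oplus O_{N,N}=\bigl(A(x;\lambda)\Phi(x,y;\lambda)\bigr)\oplus O_{N,N},
\end{equation*}
using the defining equation of $\Phi$, while block multiplication gives $\hat{A}(x;\lambda)\hat{\Phi}(x,y;\lambda)=\bigl(A(x;\lambda)\Phi(x,y;\lambda)\bigr)\oplus O_{N,N}$ as well. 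Hence $\partial_{x}\hat{\Phi}=\hat{A}\hat{\Phi}$, and by the existence and uniqueness theorem for the linear matrix initial value problem (valid since $\hat{A}$ is continuous in $x$ and analytic in $\lambda$), $\hat{\Phi}$ is the fundamental solution matrix of \eqref{eq:p1}. The stated action $\hat{\Phi}(x,y;\lambda)\hat{Y}=(\Phi(x,y;\lambda)Y,W)$ for $\hat{Y}=(Y,W)$ is then just block matrix--vector multiplication.

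There is no serious obstacle here; the content of the lemma is purely bookkeeping. The one point deserving a word is that the trivial block of $\hat{A}$ is genuinely zero, so that the $W$-component is a conserved quantity of \eqref{eq:p1}; this is exactly the feature that lets the diagonal subspaces $U_{\pm}=\{(Y,Y)\mid Y\in\mathbb{C}^{N}\}$ encode the periodic boundary condition $Y(-\ell)=Y(\ell)$ as separated boundary conditions, which is how the subsequent argument reduces Theorem \ref{thm:2} to the Grassmannian picture already developed for Theorem \ref{thm:1}.
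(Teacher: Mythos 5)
Your verification is correct and is exactly the routine computation the paper has in mind: the paper states this lemma without any proof, treating it as immediate bookkeeping, and your block-form check of $\partial_x\hat{\Phi}=\hat{A}\hat{\Phi}$ with $\hat{A}=A\oplus O_{N,N}$, the normalization $\hat{\Phi}(x,x;\lambda)=I_{2N}$, and the appeal to uniqueness of the linear matrix initial value problem supplies precisely the missing (trivial) argument. Nothing further is needed.
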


Therefore we obtain the topological invariant for periodic boundary conditions via separated boundary conditions.
Note that the dimension of $U_{\pm}$ coincides with $N$.

This leads to the following lemma.
\begin{lem}\label{lem:p1}
$\lambda$ is an eigenvalue of ${\cal T}_{\ell}^{per}$ if and only if $\hat{\Phi}(\ell,-\ell;\lambda)U_{-} \cap U_{+}\neq \{ 0 \}$,
where $\hat{\Phi}(x,y;\lambda)$ is the fundamental solution matrix for \eqref{eq:p1}.
\end{lem}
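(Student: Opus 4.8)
The statement is the exact analogue, for the augmented system \eqref{eq:p1}, of the first Lemma of Section~3.2, and I would prove it by the same two‑line argument, the only new input being the preceding Lemma which identifies $\hat\Phi=\Phi\oplus I$ as a fundamental solution matrix of \eqref{eq:p1}. First I would recall, as noted immediately above the statement, that by the transformation of the periodic boundary conditions into separated ones (with $U_{\pm}=\{(Y,Y)\mid Y\in\mathbb{C}^{N}\}$) and by Fact~1, the operator ${\cal T}_{\ell}^{per}(\lambda)$ fails to be invertible exactly when it has a nontrivial kernel, i.e.\ exactly when \eqref{eq:p1} admits a nontrivial solution $\hat Y(\cdot;\lambda)$ with $\hat Y(-\ell;\lambda)\in U_{-}$ and $\hat Y(\ell;\lambda)\in U_{+}$.

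Next I would use that $\hat\Phi(x,y;\lambda)$ is the fundamental solution matrix of \eqref{eq:p1}, so that every solution is $\hat Y(x;\lambda)=\hat\Phi(x,-\ell;\lambda)\,\hat Y(-\ell;\lambda)$ and is uniquely determined by its value at $x=-\ell$. Since $\hat\Phi(\cdot,-\ell;\lambda)\in GL_{2N}(\mathbb{C})$, such a $\hat Y$ is nontrivial if and only if $\hat Y(-\ell;\lambda)\neq 0$. Combining the two facts, a nontrivial solution with the required boundary behaviour exists precisely when there is $v\in U_{-}\setminus\{0\}$ with $\hat\Phi(\ell,-\ell;\lambda)v\in U_{+}$, that is, precisely when $\hat\Phi(\ell,-\ell;\lambda)U_{-}\cap U_{+}\neq\{0\}$. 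This yields the claimed equivalence, and the converse direction is automatic because the flow $\hat\Phi$ is invertible, so no separate argument is needed.

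\textbf{Main obstacle.} There is essentially no technical difficulty here: the content is linear algebra transported along the invertible flow $\hat\Phi$. The only point one must be careful about is the very first step, namely that for this operator on the bounded interval ``$\lambda$ is an eigenvalue'' (non‑invertibility of ${\cal T}_{\ell}^{per}(\lambda)$) is equivalent to the existence of a nontrivial solution satisfying the separated boundary data — in other words that the cokernel is trivial exactly when the kernel is. This is exactly Fact~1 (Fredholm index zero on $(-\ell,\ell)$ under periodic boundary conditions), so it may be worth citing it explicitly at that point; after that the proof is the same short computation as in the separated case.
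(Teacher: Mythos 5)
Your proposal is correct and follows essentially the same route as the paper, which treats this lemma as an immediate consequence of the transformation of periodic into separated boundary conditions, the identification $\hat{\Phi}=\Phi\oplus I$, and the same linear-algebra argument already used for the separated case (Lemma \ref{lem:1}), with Fact 1 guaranteeing that non-invertibility means a nontrivial kernel. No gap; nothing further is needed.
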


The strategy of the proof is as follows. 
Denote $V(x;\lambda):=\Phi(x,\ell;\lambda)U_{-} \cap \bar{E}_{0}(\lambda)$.
To find the intersection of $\Phi(\ell,\ell;\lambda)U_{-}$ and $U_{+}$,
we track the one-dimensional subspace $V(\ell;\lambda)$ in the complex projective space $\mathbb{CP}^{N}=\mathbb{P}(\bar{E}_{0}(\lambda))$.

Let $E^{c}(\lambda)$ be an $N$-dimensional generalized eigenspace associated with $ \overbrace{ 0,\cdots,0}^{N}$ and $E^{k}(\lambda)$ be an one-dimensional eigenspace associated with $\mu_{0}^{k}(\lambda)$.
Denote ${\cal U}(\lambda):= \mathbb{P}(U_{+}\cap \bar{E}_{0}(\lambda)) \in {\cal E}_{0}(\lambda)$ and ${\cal E}_{0}(\lambda) := \mathbb{P}(\bar{E}_{0}(\lambda))/\mathbb{P}(E^{c}(\lambda))$ where ${\cal E}_{0}(\lambda) \cong \mathbb{CP}^{1}$.
Put $P(\ell;\lambda)=\mathbb{P}(V(\ell;\lambda))$,
and $\hat{P}(\ell;\lambda)$ is a corresponding element in ${\cal E}_{0}(\lambda)$.
By Hypothesis \ref{hyp:p3},
$\hat{P}(\ell;\lambda),\ {\cal U}(\lambda) \in {\cal E}_{0}(\lambda)$.
Then the following lemma holds.
\begin{lem}\label{lem:p2}
$\lambda$ is an eigenvalue of ${\cal T}_{\ell}^{per}$ if $\hat{P}(\ell;\lambda) = {\cal U}(\lambda)$.
\end{lem}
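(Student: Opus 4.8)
\textbf{Proof proposal for Lemma \ref{lem:p2}.}
The plan is to unwind the chain of reductions already in place and verify that the single geometric condition $\hat{P}(\ell;\lambda)={\cal U}(\lambda)$ in ${\cal E}_0(\lambda)\cong\mathbb{CP}^1$ forces a genuine eigenfunction of ${\cal T}^{per}_\ell(\lambda)$. First I would recall from Lemma \ref{lem:p1} that $\lambda$ is an eigenvalue of ${\cal T}^{per}_\ell$ precisely when $\hat{\Phi}(\ell,-\ell;\lambda)U_-\cap U_+\neq\{0\}$, equivalently (pushing $U_-$ forward and comparing inside a common fiber) when $\Phi(\ell,-\ell;\lambda)U_-$ meets $U_+$ nontrivially. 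The subtlety is that $U_\pm$ have dimension $N$ in $\mathbb{C}^{2N}$, so a naive dimension count does not by itself guarantee a nonzero intersection; this is exactly why the transversality Hypothesis \ref{hyp:p3} was imposed, giving $\dim(\bar{E}_0(\lambda)\cap U_\pm)=1$, and it is what I would use to localize the intersection problem inside the $(N+1)$-dimensional space $\bar{E}_0(\lambda)$.

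Next I would trace through the definitions: $V(x;\lambda)=\Phi(x,\ell;\lambda)U_-\cap\bar{E}_0(\lambda)$ is one-dimensional by Hypothesis \ref{hyp:p3} (applied to the forward-transported boundary subspace, using the second clause $\Phi(\ell,-\ell;\lambda)U_-+\bar{E}_0(\lambda)=\mathbb{C}^{2N}$), so $P(\ell;\lambda)=\mathbb{P}(V(\ell;\lambda))$ is a well-defined point of $\mathbb{CP}^N=\mathbb{P}(\bar{E}_0(\lambda))$, and $\hat{P}(\ell;\lambda)$ is its image in the quotient ${\cal E}_0(\lambda)=\mathbb{P}(\bar{E}_0(\lambda))/\mathbb{P}(E^c(\lambda))\cong\mathbb{CP}^1$. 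Likewise ${\cal U}(\lambda)=\mathbb{P}(U_+\cap\bar{E}_0(\lambda))$ is a single point of ${\cal E}_0(\lambda)$. The equality $\hat{P}(\ell;\lambda)={\cal U}(\lambda)$ in the quotient says that the line $V(\ell;\lambda)$ and the line $U_+\cap\bar{E}_0(\lambda)$ differ only by a vector in the central subspace $E^c(\lambda)$; I would then argue that because $U_+\cap\bar{E}_0(\lambda)$ is a coordinate-type line transverse to $E^c(\lambda)$ in the Grassmannian picture set up by Property \ref{property2} (the ordering there splits off exactly the unstable/stable pair from the $N$ zero exponents), the congruence modulo $\mathbb{P}(E^c(\lambda))$ actually upgrades to genuine coincidence of the one-dimensional spaces, i.e. $V(\ell;\lambda)=U_+\cap\bar{E}_0(\lambda)$. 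In particular a nonzero vector $v$ spanning $V(\ell;\lambda)$ lies in $U_+$ and also, by construction of $V$, in $\Phi(\ell,-\ell;\lambda)U_-$; pulling back through $\hat\Phi$ gives the desired nonzero element of $\hat\Phi(\ell,-\ell;\lambda)U_-\cap U_+$, hence an eigenfunction.

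The step I expect to be the main obstacle is the upgrade from ``equal in the quotient ${\cal E}_0(\lambda)$'' to ``equal as lines in $\bar{E}_0(\lambda)$'': a priori $\hat P(\ell;\lambda)={\cal U}(\lambda)$ only says the two lines have the same image modulo $E^c(\lambda)$, and one must rule out that $V(\ell;\lambda)$ is ``pulled into'' the central block. I would handle this by invoking the same mechanism used in the separated case (Lemma \ref{lem:9} and the attracting/repelling structure on ${\cal SU}_+$): the quotient ${\cal E}_0(\lambda)$ is realized as an attracting invariant $\mathbb{CP}^1$ for the induced flow away from $\mathbb{P}(E^c(\lambda))$, and $P(\ell;\lambda)$ is engineered to stay off $\mathbb{P}(E^c(\lambda))$ for large $\ell$; more directly, since $V(\ell;\lambda)\subset\bar E_0(\lambda)$ and $V(\ell;\lambda)\cap E^c(\lambda)=\{0\}$ by Hypothesis \ref{hyp:p3} (the transversality forces $V(\ell;\lambda)$ to meet the hyperplane-complement of $E^c$ properly), the projection $\mathbb{P}(\bar E_0(\lambda))\setminus\mathbb{P}(E^c(\lambda))\to{\cal E}_0(\lambda)$ is injective on the relevant locus, so equality downstairs gives equality upstairs. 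Note also that the statement is phrased as a one-directional implication (``if''), so I only need this direction; the converse (that every eigenvalue shows up this way) would additionally require the projection/transversality argument in the other direction and is not claimed here. Once the line equality is secured, the conclusion is immediate from Lemma \ref{lem:p1}.
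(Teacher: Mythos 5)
Your proposal is correct and takes essentially the same route the paper intends: the paper states this lemma without an explicit proof, treating it — like its separated-boundary analogue — as immediate from Lemma \ref{lem:p1} once one observes that equality in ${\cal E}_{0}(\lambda)$ away from the collapsed point $[\mathbb{P}(E^{c}(\lambda))]$ forces $V(\ell;\lambda)=U_{+}\cap\bar{E}_{0}(\lambda)$, hence a nonzero vector of $\hat{\Phi}(\ell,-\ell;\lambda)U_{-}\cap U_{+}$. Your explicit handling of the upgrade from equality in the quotient to equality of lines (injectivity of the collapse map off $\mathbb{P}(E^{c}(\lambda))$, with ${\cal U}(\lambda)$ and $\hat{P}(\ell;\lambda)$ away from the collapsed point) is precisely the detail the paper leaves tacit.
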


Consider the asymptotic behavior of $\hat{P}(\ell;\lambda)$ for $B(\lambda_{c};\delta)$.
A restricted system of \eqref{eq:p1} in $\bar{E}_{0}(\lambda)$ is given by
\begin{eqnarray*}
Z'=\hat{A}_{0}(\lambda)Z
\end{eqnarray*}
where $Z=(Z_{1},\cdots,Z_{N+1}) \in \bar{E}_{0}(\lambda)$ and $\hat{A}_{0}(\lambda)=\diag\{ \mu_{0}^{k}(\lambda),\overbrace{ 0 , \cdots, 0}^{N} \}$.

We take the inhomogeneous coordinate $(\frac{z_{2}}{z_{1}})$ in a neighborhood of $P_{n}=\mathbb{P}(E^{k}(\lambda)) \in {\cal E}_{0}(\lambda)$.
$\hat{P}(\ell;\lambda)$ is then controlled by
\begin{equation*}
\Bigl(\frac{z_{2}}{z_{1}}\Bigr)' = -\mu_{0}^{k}(\lambda)(\frac{z_{2}}{z_{1}}\Bigr).
\end{equation*}
Similarly,
we take the inhomogeneous coordinate $(\frac{z_{1}}{z_{2}})$ in a neighborhood of $P_{s}=[\mathbb{P}(E^{c}(\lambda))] \in {\cal E}_{0}(\lambda)$.
$\hat{P}(\ell;\lambda)$ is then controlled by
\begin{equation*}
\Bigl(\frac{z_{1}}{z_{2}}\Bigr)' = \mu_{0}^{k}(\lambda)(\frac{z_{1}}{z_{2}}\Bigr).
\end{equation*}
Since $\real \mu_{0}^{k}(\lambda) > 0$ for any $\lambda \in B_{1}$ and $\real \mu_{0}^{k}(\lambda) < 0$ for any $\lambda \in B_{2}$,
the following lemma holds.
\begin{lem}\label{lem:p3}
$P_{n}$ is an attractor and $P_{s}$ is a dual repeller in ${\cal E}_{0}(\lambda)$ for any $\lambda \in B_{1}$,
and $P_{s}$ is an attractor and $P_{n}$ is a dual repeller in ${\cal E}_{0}(\lambda)$ for any $\lambda \in B_{2}$.
\end{lem}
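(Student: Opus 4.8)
The plan is to verify Lemma~\ref{lem:p3} by a direct dynamical analysis of the scalar complex ODEs already exhibited in the two inhomogeneous charts on ${\cal E}_{0}(\lambda)\cong\mathbb{CP}^{1}$. First I would observe that ${\cal E}_{0}(\lambda)$ is covered by the two charts centered at $P_{n}=\mathbb{P}(E^{k}(\lambda))$ and at $P_{s}=[\mathbb{P}(E^{c}(\lambda))]$, and that in the chart near $P_{n}$ the coordinate $w:=z_{2}/z_{1}$ satisfies $w'=-\mu_{0}^{k}(\lambda)w$, while in the chart near $P_{s}$ the reciprocal coordinate $v:=z_{1}/z_{2}$ satisfies $v'=\mu_{0}^{k}(\lambda)v$. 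These are linear, so the solutions are $w(\ell)=w(\ell_{0})e^{-\mu_{0}^{k}(\lambda)(\ell-\ell_{0})}$ and $v(\ell)=v(\ell_{0})e^{\mu_{0}^{k}(\lambda)(\ell-\ell_{0})}$, and $P_{n}$, $P_{s}$ are the only equilibria.

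Next I would split into the two cases. For $\lambda\in B_{1}$ Property~\ref{property2} gives $\real\mu_{0}^{k}(\lambda)>0$; hence $|w(\ell)|=|w(\ell_{0})|e^{-\real\mu_{0}^{k}(\lambda)(\ell-\ell_{0})}\to 0$, so every orbit starting off $P_{s}$ converges to $P_{n}$ as $\ell\to\infty$, i.e. $P_{n}$ is an attractor; dually, in the $v$-chart $|v(\ell)|$ grows, so $P_{s}$ is a repeller, and since ${\cal E}_{0}(\lambda)$ is a sphere the unstable set of $P_{s}$ is all of ${\cal E}_{0}(\lambda)\setminus\{P_{s}\}$, making $\{P_{s}\}$ the dual repeller of the attractor $\{P_{n}\}$ in the sense of the attractor--repeller decomposition. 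For $\lambda\in B_{2}$ Property~\ref{property2} gives $\real\mu_{0}^{k}(\lambda)<0$, and the identical computation with the roles of the two charts exchanged shows $P_{s}$ is the attractor and $P_{n}$ the dual repeller. I would also note that the relevant orbit $\hat{P}(\ell;\lambda)$ does not start at either equilibrium: by Hypothesis~\ref{hyp:p3}, $\Phi(\ell,-\ell;\lambda)U_{-}+\bar E_{0}(\lambda)=\mathbb{C}^{2N}$ forces $V(\ell;\lambda)=\Phi(\ell,-\ell;\lambda)U_{-}\cap\bar E_{0}(\lambda)$ to be genuinely transverse to the critical subspaces $E^{c}(\lambda)$ and $E^{k}(\lambda)$, so $\hat P(\ell;\lambda)\neq P_{n},P_{s}$ for $\ell\ge\ell_{0}$, and the asymptotics above apply to it.

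The only real subtlety — and the step I would treat most carefully — is bookkeeping the identification ${\cal E}_{0}(\lambda)=\mathbb{P}(\bar E_{0}(\lambda))/\mathbb{P}(E^{c}(\lambda))$: the restricted flow $Z'=\hat A_{0}(\lambda)Z$ with $\hat A_{0}(\lambda)=\diag\{\mu_{0}^{k}(\lambda),0,\dots,0\}$ acts on the $(N+1)$-dimensional space $\bar E_{0}(\lambda)$, and one must check that after collapsing the $N$-dimensional fixed subspace $\mathbb{P}(E^{c}(\lambda))$ to the single point $P_{s}$ the induced flow on the quotient $\mathbb{CP}^{1}$ is exactly the two scalar equations written above — this is where the eigenvalue difference $\mu_{0}^{k}(\lambda)-0=\mu_{0}^{k}(\lambda)$ enters as the coefficient, matching the eigenvalue-difference coefficient $\nu_{+}^{1}-\nu_{+}^{2}$ that played the analogous role in Lemma~\ref{lem:9} for the separated case. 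Once this identification is nailed down, the attractor/repeller statement is immediate from the sign of $\real\mu_{0}^{k}(\lambda)$, so the lemma follows, and it sets up the same topological covering/winding argument used in the proof of Theorem~\ref{thm:1} to conclude Theorem~\ref{thm:2}.
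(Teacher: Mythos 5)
Your proposal is correct and follows essentially the same route as the paper: the paper establishes the lemma by exactly the chart computations you give, namely the restricted diagonal system $Z'=\hat A_{0}(\lambda)Z$ on $\bar E_{0}(\lambda)$, the scalar equations $\bigl(\frac{z_{2}}{z_{1}}\bigr)'=-\mu_{0}^{k}(\lambda)\frac{z_{2}}{z_{1}}$ and $\bigl(\frac{z_{1}}{z_{2}}\bigr)'=\mu_{0}^{k}(\lambda)\frac{z_{1}}{z_{2}}$ near $P_{n}$ and $P_{s}$, and the sign of $\real\mu_{0}^{k}(\lambda)$ on $B_{1}$ and $B_{2}$ from Property~\ref{property2}. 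Your extra remarks (explicit solutions, the quotient bookkeeping for ${\cal E}_{0}(\lambda)$, and the aside that Hypothesis~\ref{hyp:p3} keeps $\hat P(\ell;\lambda)$ away from $P_{n},P_{s}$ -- the last being slightly more than the hypothesis literally gives, but not needed for the lemma) only elaborate the same argument.
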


Let $N_{n}$ and $N_{s}$ be neighborhoods of $P_{n}$ and $P_{s}$ satisfying $N_{n} \cap N_{s} = \emptyset$ and ${\cal U}(\lambda) \in {\cal E}_{0}(\lambda) \setminus \{ N_{n} \cup  N_{s}\}$.
We then define the map
\begin{equation*}
\psi_{\ell}=\hat{P}(\ell;\cdot): B(\lambda_{c};\delta) \to {\cal E}_{0}(\lambda) /  \{ N_{n} \cup  N_{s}\}.
\end{equation*}

The remainder of the proof uses the same arguments as for the case of separated boundary conditions.
That is,
for any positive integer $n \in \mathbb{N}$,
there exists $\ell_{*}>0$ such that a map $\psi_{\ell}$ is an $n$-covering map for any $\ell \geq \ell_{*}$ and hence,
$\{ {\cal T}_{\ell}^{per} \}$ has at least $n$ eigenvalues in $B(\lambda_{c};\delta)$.

%%%%%%%%%%%%%%%%%%%%%%%%%%%%%%%%%%%%%%%%%%%%%%%%%%%%%%%%%%%%%%%%%%%%%%%%%%%

\section{Acknowledgments}
The author would like to thank Yasumasa Nishiura for many important suggestions. 
He also would like to thank Shunsaku Nii for providing useful comments and stimulating discussions.
He gratefully acknowledges helpful discussions with Shin-Ichiro Ei and Takashi Teramoto on several points in the paper.
This work was supported by CREST, JST.

\end{document}